\theoremstyle{plain} \newtheorem{proposition}{Proposition}
\theoremstyle{plain} \newtheorem{theorem}{Theorem}
\theoremstyle{plain} 
\theoremstyle{plain} \newtheorem{lemma}{Lemma}
\theoremstyle{remark} \newtheorem{remark}{Remark}
\newcommand{\normL}[2]{\left\| #2 \right\|_{#1}}
\newcommand{\normLpw}[2]{\mathbb{E}\left[\left| #2 \right|^{#1}\right]}
\newcommand{\addfunc}[1]{S_{#1}}
\newcommand{\osc}{\mathrm{osc}}
\newcommand{\size}{r}
\newcommand{\eqdef}{\ensuremath{\stackrel{\mathrm{def}}{=}}}
\newcommand{\GNorm}{\cite[Lemma 10]{douc:garivier:moulines:olsson:2010}}
\newcommand{\TechLem}{\cite[Lemma 4]{douc:garivier:moulines:olsson:2010}}
\newcommand{\GisCentered}{\cite[Lemma 3]{douc:garivier:moulines:olsson:2010}}
\def\Xset{\mathbb{X}}
\def\Yset{\mathbb{Y}}
\def\sigmaX{\mathcal{X}}
\def\sigmaY{\mathcal{Y}}
\newcommand{\error}{\Delta_T^N[\addfunc{T,\size}]}
\newcommand{\errorh}{\Delta_T^N[h]}
\newcommand{\errorsmall}{\delta_{T}^N}
\newcommand{\eqsp}{\;}
\newcommand{\epart}[2]{\xi_{#1}^{#2}}
\newcommand{\ewght}[2]{\omega_{#1}^{#2}}
\newcommand{\rmd}{\mathrm{d}}
\newcommand{\rmL}{\mathrm{L}}
\newcommand{\rmE}{\mathrm{E}}
\newcommand{\rme}{\mathrm{e}}
\newcommand{\rmB}{\mathrm{B}}
\newcommand{\maxosc}[1]{\underset{\size\leq t\leq #1}{\max}\left\{\osc(h_t)\right\}}
\newcommand{\Xinit}{\ensuremath{\chi}}
\newcommand{\XinitIS}[2][]{\ifthenelse{\equal{#1}{}}{\ensuremath{\rho_{#2}}}{\ensuremath{\check{\rho}_{#2}}}}
\newcommand{\iid}{i.i.d.}
\newcommand{\CExp}[2]{\mathbb{E}\left[#1\middle | #2\right]}
\newcommand{\filt}[2][]%
{
\ifthenelse{\equal{#1}{}}{\ensuremath{\phi_{#2}}}%
{\ifthenelse{\equal{#1}{hat}}{\ensuremath{\phi^{N}_{#2}}}
{\ifthenelse{\equal{#1}{tilde}}{\ensuremath{\tilde{\phi}^{N}_{#2}}}
{\ifthenelse{\equal{#1}{tar}}{\ensuremath{\phi^{N,\mathrm{t}}_{#2}}}
{\ifthenelse{\equal{#1}{aux}}{\ensuremath{\phi^{N,\mathrm{a}}_{#2}}}
}
}
}
}
}
\newcommand{\mcbf}[2][]{%
\ifthenelse{\equal{#1}{}}{\overline{\mathcal{F}}_{#2}}{\overline{\mathcal{F}}_{#2}^{(#1)}}%
}
\newcommand{\unfilt}[2][]%
{
\ifthenelse{\equal{#1}{}}{\ensuremath{\gamma_{#2}}}%
{\ifthenelse{\equal{#1}{hat}}{\ensuremath{\gamma^{N}_{#2}}}
{\ifthenelse{\equal{#1}{tilde}}{\ensuremath{\tilde{\gamma}^{N}_{#2}}}
{\ifthenelse{\equal{#1}{tar}}{\ensuremath{\gamma^{N,\mathrm{t}}_{#2}}}
{\ifthenelse{\equal{#1}{aux}}{\ensuremath{\gamma^{N,\mathrm{a}}_{#2}}}
}
}
}
}
}
\newcommand{\CPE}[3][]
{\ifthenelse{\equal{#1}{}}{\mathbb{E}\left[\left. #2 \, \right| #3 \right]}{\mathbb{E}_{#1}\left[\left. #2 \, \right| #3 \right]}}
\newcommand{\mcf}[2]{\mathcal{F}_{#1}^{#2}}
\newcommand{\1}{\ensuremath{\mathbf{1}}}
\newcommand{\Xsigma}[1][]%
{%
\ifthenelse{\equal{#1}{}}{\ensuremath{\mathcal{B}(\Xset)}}{\ensuremath{\mathcal{B}(\Xset^{#1})}}
}
\newcommand{\sumwght}[2][]{%
\ifthenelse{\equal{#1}{}}{\ensuremath{\Omega_{#2}}}{\ensuremath{\Omega_{#2}^{#1}}}}
\newcommand{\instrpostaux}[2]{\ensuremath{\pi_{#1|#2}}}
\newcommand{\adjfunc}[4][]
{\ifthenelse{\equal{#1}{}}{\ifthenelse{\equal{#4}{}}{\vartheta_{#2}}{\vartheta_{#2}(#4)}}
{\ifthenelse{\equal{#1}{smooth}}{\ifthenelse{\equal{#4}{}}{\tilde{\vartheta}_{#2}}{\tilde{\vartheta}_{#2}(#4)}}
{\ifthenelse{\equal{#1}{fully}}{\ifthenelse{\equal{#4}{}}{\vartheta^\star_{#2}}{\vartheta^\star_{#2}(#4)}}{\mathrm{erreur}}}}}
\newcommand{\chunk}[4][]%
{\ifthenelse{\equal{#1}{}}{\ensuremath{{#2}_{#3:#4}}}{\ensuremath{#2^#1}_{#3:#4}}
}
\newcommand{\kiss}[3][]
{\ifthenelse{\equal{#1}{}}{p_{#2}}
{\ifthenelse{\equal{#1}{fully}}{p^{\star}_{#2}}
{\ifthenelse{\equal{#1}{smooth}}{\tilde{r}_{#2}}{\mathrm{erreur}}}}}
\newcommand{\Kiss}[3][]
{\ifthenelse{\equal{#1}{}}{P_{#2}}
{\ifthenelse{\equal{#1}{fully}}{P^{\star}_{#2}}
{\ifthenelse{\equal{#1}{smooth}}{\tilde{R}_{#2}}{\mathrm{erreur}}}}}
\newcommand{\m}{\ensuremath{m}}
\newcommand{\BK}[1]{\mathrm{B}_{#1}}
\newcommand{\post}[3][]%
{
\ifthenelse{\equal{#1}{}}{\ensuremath{\phi_{#2|#3}}}%
{\ifthenelse{\equal{#1}{hat}}{\ensuremath{\phi^{N}_{#2|#3}}}
{\ifthenelse{\equal{#1}{tilde}}{\ensuremath{\tilde{\phi}^{N}_{#2|#3}}}
{\ifthenelse{\equal{#1}{tar}}{\ensuremath{\phi^{N,\mathrm{t}}_{#2|#3}}}}
}
}
}
\newcounter{hypA}
\newenvironment{hypA}{\refstepcounter{hypA}\begin{itemize}
\item[{\bf A\arabic{hypA}}]}{\end{itemize}}
\begin{document}

\title{Non-asymptotic deviation inequalities for smoothed additive functionals in non-linear state-space models.}

\author{Cyrille Dubarry\footnote{SAMOVAR, Institut T\'el\'ecom/T\'el\'ecom SudParis, France, cyrille.dubarry@it-sudparis.eu} \,and Sylvain Le Corff\footnote{LTCI, Institut T\'el\'ecom/T\'el\'ecom ParisTech, France, sylvain.lecorff@telecom-paristech.fr}\footnote{This work was supported by the French National Research Agency, under the program ANR-07 ROBO 0002}}

\maketitle

\begin{abstract}
The approximation of fixed-interval smoothing distributions is a key issue in inference for general state-space hidden Markov models (HMM). This contribution establishes non-asymptotic bounds for  the Forward Filtering Backward Smoothing (FFBS)  and the Forward Filtering Backward Simulation (FFBSi) estimators of fixed-interval smoothing functionals. We show that the rate of convergence of the $\rmL_{q}$-mean errors of both methods depends on the number of observations $T$ and  the number of particles $N$ only through the ratio $T/N$ for additive functionals. In the case of the FFBS, this improves recent results providing bounds depending on $T/\sqrt{N}$.
 \end{abstract}

\section{Introduction}
\label{sec:intro}
State-space models play a key role in statistics, engineering and econometrics; see \cite{cappe:moulines:ryden:2005,durbin:koopman:2000,west:harrison:1989}. Consider a process $\{X_{t}\}_{t\geq 0}$ taking values in a general state-space $\Xset$. This hidden process can be observed only through the observation process $\{Y_{t}\}_{t\geq 0}$ taking values in $\Yset$. Statistical inference in general state-space models involves the computation of expectations of additive functionals of the form
\[
S_{T} = \sum_{t=1}^{T}h_{t}(X_{t-1},X_{t})\eqsp,
\]
conditionally to $\{Y_{t}\}_{t=0}^{T}$, where $T$ is a positive integer and $\{h_{t}\}_{t=1}^{T}$ are functions defined on $\Xset^{2}$. These smoothed additive functionals appear naturally for maximum likelihood parameter inference in hidden Markov models. The computation of the gradient of the log-likelihood function (Fisher score) or of the intermediate quantity of the Expectation Maximization algorithm involves the estimation of such smoothed functionals, see \cite[Chapter $10$ and $11$]{cappe:moulines:ryden:2005} and \cite{doucet:poyiadjis:singh:2009}.

Except for linear Gaussian state-spaces or for finite state-spaces, these smoothed additive functionals cannot be computed explicitly.
In this paper, we consider Sequential Monte Carlo algorithms, henceforth referred to as particle methods, to approximate these quantities. These methods combine sequential importance sampling and sampling importance resampling steps to produce a set of random particles with associated importance weights to approximate the fixed-interval smoothing distributions.

The most straightforward implementation is based on the so-called path-space method. The complexity of this algorithm per time-step grows only linearly with the number $N$ of particles, see \cite{delmoral:2004}. However, a well-known shortcoming of this algorithm is known in the literature as the path degeneracy; see \cite{doucet:poyiadjis:singh:2009} for a discussion.

Several solutions have been proposed to solve this degeneracy problem.  In this paper, we consider the Forward Filtering Backward Smoothing algorithm (FFBS) and  the Forward Filtering Backward Simulation algorithm (FFBSi) introduced in \cite{doucet:godsill:andrieu:2000} and further developed in \cite{godsill:doucet:west:2004}. Both algorithms proceed in two passes. In the forward pass, a set of particles and weights is stored. In the Backward pass of  the FFBS the weights are modified but the particles are kept fixed. The FFBSi  draws independently different particle trajectories among all possible paths.
Since they use a backward step, these algorithms are mainly adapted for batch estimation problems. However, as shown in \cite{delmoral:doucet:singh:2010}, when applied to additive functionals, the FFBS algorithm can be implemented forward in time, but its complexity grows quadratically with the number of particles. As shown in \cite{douc:garivier:moulines:olsson:2010}, it is possible to implement the FFBSi with a complexity growing only linearly with the number of particles.

The control of the $\rmL_{q}$-norm of the deviation between the smoothed additive functional and its particle approximation has been studied recently in \cite{delmoral:doucet:singh:2010,delmoral:doucet:singh:2010b}. In an unpublished paper by \cite{delmoral:doucet:singh:2010b}, it is shown that the FFBS estimator variance of any smoothed additive functional is upper bounded by terms depending on $T$ and $N$ only through the ratio $T/N$. Furthermore, in \cite{delmoral:doucet:singh:2010}, for any $q > 2$, a $\rmL_q$-mean error bound for smoothed functionals computed with the FFBS is established. When applied to strongly mixing kernels, this bound amounts to be of order $T/\sqrt{N}$ either for
\begin{enumerate}[(i)]
\item \label{item:general} uniformly bounded in time general path-dependent functionals,
\item \label{item:add} unnormalized additive functionals (see \cite[Eq.~(3.8), pp.~957]{delmoral:doucet:singh:2010}).
\end{enumerate}

In this paper, we establish $\rmL_q$-mean error and exponential deviation inequalities of both the FFBS and FFBSi smoothed functionals estimators.
We show that, for any $q \geq 2$, the $\rmL_q$-mean error for both algorithms is upper bounded by terms depending on $T$ and $N$ only through the ratio $T/N$ under the strong mixing conditions for \eqref{item:general} and \eqref{item:add}. We also establish an exponential deviation inequality with the same functional dependence in $T$ and $N$.

This paper is organized as follows. Section~\ref{sec:framework} introduces further definitions and notations and the FFBS and FFBSi algorithms. In Section~\ref{sec:results}, upper bounds for the $\rmL_q$-mean error and exponential deviation inequalities of these two algorithms are presented. In Section \ref{sec:MCexp}, some Monte Carlo experiments are presented to support our theoretical claims. The proofs are presented in Sections~\ref{sec:proof:theorem:mainresult} and \ref{sec:proof:theorem:expineq}.

\section{Framework}
\label{sec:framework}
Let $\Xset$ and $\Yset$ be two general state-spaces endowed with countably generated $\sigma$-fields $\sigmaX$ and $\sigmaY$. Let $M$ be a Markov transition kernel defined on $\Xset\times\sigmaX$ and $\{g_{t}\}_{t\geq 0}$ a family of functions defined on $\Xset$. It is assumed that, for any $x \in \Xset$, $M(x,\cdot)$ has a density $m(x, \cdot)$ with respect to a reference measure $\lambda$ on $(\Xset,\sigmaX)$. For any integers $T\geq 0$ and $0 \leq s \leq t \leq T$, any measurable function $h$ on $\Xset^{t-s+1}$, and any probability distribution $\chi$ on $(\Xset,\sigmaX)$, define
\begin{equation}
\label{eq:smooth}
\phi_{s:t|T}[h] \eqdef \frac{\int \chi(\rmd x_0)g_{0}(x_0)\prod_{u=1}^{T}M(x_{u-1},\rmd x_u)g_{u}(x_u)h(x_{s:t})}{\int \chi(\rmd x_0)g_{0}(x_0)\prod_{u=1}^{T}M(x_{u-1},\rmd x_u)g_{u}(x_u)}\eqsp,
\end{equation}
where  $a_{u:v}$ is a short-hand notation for $\{a_s\}_{s=u}^{v}$. The dependence on $g_{0:T}$ is implicit and is dropped from the notations.
\begin{remark}
Note that this equation has a simple interpretation in the particular case of hidden Markov models. Indeed, let $\left(\Omega, \mathcal{F}, \mathbb{P}\right)$ be a probability space and $\{X_{t}\}_{t\ge 0}$ a Markov chain on $\left(\Omega, \mathcal{F}, \mathbb{P}\right)$ with transition kernel $M$ and initial distribution $\chi$ (which we denote $X_{0}\sim\chi$). Let $\{Y_{t}\}_{t\ge 0}$ be a sequence of observations  on $\left(\Omega, \mathcal{F}, \mathbb{P}\right)$ conditionally independent given $\sigma(X_{t},t\ge 0)$ and such that the conditional distribution of $Y_{u}$ given $\sigma(X_{t},t\ge 0)$ has a density given by $g(X_{u},\cdot)$ with respect to a reference measure on $\sigmaY$ and set $g_{u}(x) = g(x,Y_{u})$. Then, the quantity $\phi_{s:t|T}[h] $ defined by \eqref{eq:smooth} is the conditional expectation of $h(X_{s:t})$ given $Y_{0:T}$:
\[
\phi_{s:t|T}[h]  = \mathbb{E}\left[h(X_{s:t})\middle|Y_{0:T}\right]\eqsp,\quad X_{0}\sim\chi\eqsp.
\]
\end{remark}
In its original version, the FFBS algorithm proceeds in two passes.  In the forward pass, each filtering distribution $\filt{t}\eqdef\filt{t:t}$, for any $t\in\{0,\dots,T\}$,  is approximated using weighted samples $\left\{(\ewght{t}{N,\ell},\epart{t}{N,\ell})\right\}_{\ell=1}^{N}$, where $T$ is the number of observations and $N$ the number of particles: all sampled particles and weights are stored. In the backward pass of the FFBS, these importance weights are then modified (see \cite{doucet:godsill:andrieu:2000,huerzeler:kuensch:1998,kitagawa:1996}) while the particle positions are kept fixed. The importance weights are updated recursively backward in time to obtain an approximation of the fixed-interval smoothing distributions $\left\{\post{s:T}{T}\right\}_{s=0}^{T}$.  The particle approximation is constructed as follows.
\paragraph{Forward pass}
Let $\{\epart{0}{N,\ell}\}_{\ell = 1}^N$ be \iid\ random variables distributed according to the instrumental density $\rho_0$ and set the importance weights $\ewght{0}{N,\ell} \eqdef \rmd \Xinit / \rmd \XinitIS{0}(\epart{0}{N,\ell}) \, g_{0}(\epart{0}{N,\ell})$. The weighted sample $\{ (\epart{0}{N,\ell}, \ewght{0}{N,\ell})\}_{\ell = 1}^N$ then targets the initial filter $\filt{0}$ in the sense that
$\filt[hat]{0}[h]\eqdef \sum_{\ell=1}^N \ewght{0}{N,\ell}h(\epart{0}{N,\ell})/\sum_{\ell=1}^N \ewght{0}{N,\ell}$ is a consistent estimator of $\filt{0}[h]$ for any bounded and measurable function  $h$ on $\Xset$.\\
Let now $\{ (\epart{s-1}{N,\ell}, \ewght{s-1}{N,\ell}) \}_{\ell = 1}^N$ be a weighted sample targeting $\filt{s-1}$. We aim at computing new particles and importance weights targeting the probability distribution
$\filt{s}$. Following \cite{pitt:shephard:1999}, this may be done by simulating pairs $\{ (I_s^{N,\ell}, \epart{s}{N,\ell}) \}_{\ell = 1}^N$ of indices and particles from the instrumental distribution:
\begin{equation*} \label{eq:instrumental-distribution-filtering}
    \instrpostaux{s}{s}(\ell, h) \propto \ewght{s-1}{N,\ell} \adjfunc{s}{s}{\epart{s-1}{N,\ell}} \Kiss{s}{s}(\epart{s-1}{N,\ell},h) \eqsp,
\end{equation*}
on the product space $\{1, \dots, N\} \times \Xset$, where $\{ \adjfunc{s}{s}{\epart{s-1}{N,\ell}} \}_{\ell = 1}^N$ are the adjustment multiplier weights and $\Kiss{s}{s}$
is a Markovian proposal transition kernel. In the sequel, we assume that $\Kiss{s}{s}(x, \cdot)$ has, for any $x \in \Xset$, a density $\kiss{s}{s}(x, \cdot)$
with respect to the reference measure $\lambda$. For any  $\ell \in\{1, \dots, N\}$ we associate to the particle $\epart{s}{N,\ell}$ its importance weight defined by:
\begin{equation*} \label{eq:weight-update-filtering}
    \ewght{s}{N,\ell} \eqdef \frac{\m(\epart{s-1}{N,I_s^{N,\ell}},\epart{s}{N,\ell}) g_{s}(\epart{s}{N,\ell})}{\adjfunc{s}{s}{\epart{s-1}{N,I_s^{N,\ell}}} \kiss{s}{s}(\epart{s-1}{N,I_s^{N,\ell}},\epart{s}{N,\ell})} \eqsp.
\end{equation*}
\paragraph{Backward smoothing}
For any probability measure $\eta$ on $(\Xset, \sigmaX)$, denote by $\BK{\eta}$ the backward smoothing kernel given, for all bounded measurable function $h$ on $\Xset$ and for all  $x\in\Xset$, by:
\begin{equation*} \label{eq:backward-kernel}
    \BK{\eta}(x, h) \eqdef \frac{\int \eta(\rmd x') \; \m(x', x) h(x')}
    {\int \eta(\rmd x') \; \m(x', x)} \eqsp,
\end{equation*}
For all $s\in\{0,\dots,T-1\}$ and for all bounded measurable function $h$ on $\Xset^{T-s+1}$, $\post{s:T}{T}[h]$ may be computed recursively, backward in time, according to
\begin{equation*} \label{eq:smoothing:backw_decomposition_recursion}
    \post{s:T}{T}[h] = \int \BK{\filt{s}}(x_{s+1}, \rmd x_s) \, \post{s+1:T}{T}(\rmd \chunk{x}{s+1}{T}) \, h(\chunk{x}{s}{T}) \eqsp.
\end{equation*}

\subsection{The forward filtering backward smoothing algorithm}
\label{subsec:FFBS}
Consider the weighted samples $\left\{ (\epart{t}{N,\ell}, \ewght{t}{N,\ell})\right\}_{\ell = 1}^N$, drawn for any $t\in\{0,\dots,T\}$ in the forward pass. An approximation of the fixed-interval smoothing distribution can be obtained using
\begin{equation} \label{eq:smoothing:backw_decomposition_recursion_sample}
    \post[hat]{s:T}{T}[h] = \int \BK{\filt[hat]{s}}(x_{s+1}, \rmd x_s) \, \post[hat]{s+1:T}{T}(\rmd \chunk{x}{s+1}{T}) \, h(\chunk{x}{s}{T}) \eqsp,\\
\end{equation}
and starting with $ \post[hat]{T:T}{T}[h] = \filt[hat]{T}[h]$.
Now, by definition, for all $x\in\Xset$ and for all bounded measurable function $h$ on $\Xset$,
$$
    \BK{\filt[hat]{s}}(x, h) = \sum_{i = 1}^N \frac{\ewght{s}{N,i} \m(\epart{s}{N,i}, x)}{\sum_{\ell = 1}^N \ewght{s}{N,\ell} \m(\epart{s}{N,\ell},x)}  h\left( \epart{s}{N,i} \right) \eqsp,
$$
and inserting this expression into \eqref{eq:smoothing:backw_decomposition_recursion_sample} gives the following particle approximation of the fixed-interval smoothing distribution $\post{0:T}{T}[h]$
\begin{equation} \label{eq:forward-filtering-backward-smoothing}
    \post[hat]{0:T}{T}\left[h\right] = \sum_{i_0 = 1}^N \dots \sum_{i_T = 1}^N
    \left(\prod_{u=1}^T \Lambda_u^N(i_u,i_{u-1})\right)  \times \frac{\ewght{T}{N,i_T}}{\sumwght[N]{T}} h\left(\epart{0}{N,i_0}, \dots, \epart{T}{N,i_T}\right) \eqsp,
\end{equation}
where $h$ is a bounded measurable function on $\Xset^{T+1}$,
\begin{equation} \label{eq:definition-transition-matrix-W}
    \Lambda^N_{t}(i,j) \eqdef \frac{\ewght{t}{N,j} \m(\epart{t}{N,j}, \epart{t+1}{N,i})}{\sum_{\ell=1}^N \ewght{t}{N,\ell} \m(\epart{t}{N,\ell}, \epart{t+1}{N,i})} \eqsp, \quad (i,j) \in \{1, \dots, N\}^2 \eqsp,
\end{equation}
 and
\begin{equation} \label{eq:defOmega}
    \sumwght[N]{t} \eqdef \sum_{\ell=1}^N \ewght{t}{N,\ell} \eqsp.
\end{equation}
The estimator of the fixed-interval smoothing distribution
$\post[hat]{0:T}{T}$ might seem impractical since the cardinality of its support is $N^{T+1}$. Nevertheless, for additive functionals of the form
\begin{equation}\label{eq:addfuncbis}
\addfunc{T,\size}(\chunk{x}{0}{T}) = \sum_{t=\size}^Th_t(x_{t-\size:t})\eqsp,
\end{equation}
where $r$ is a non negative integer and $\{h_t\}_{t=\size}^{T}$ is a family of bounded measurable functions on $\Xset^{\size+1}$, the complexity of the FFBS algorithm is reduced to $O(N^{\size+2})$. Furthermore, the smoothing of such functions can be computed forward in time as shown in \cite{delmoral:doucet:singh:2010}. This forward algorithm is exactly the one presented in \cite{doucet:poyiadjis:singh:2009} as an alternative to the use of the path-space method. Therefore, the results outlined in Section~\ref{sec:results} hold for this method and confirm the conjecture mentioned in \cite{doucet:poyiadjis:singh:2009}.

\subsection{The forward filtering backward simulation algorithm}
\label{subsec:FFBSi}
We now consider an algorithm whose complexity grows only linearly with the number of particles for any functional on $\Xset^{T+1}$. For any $t\in\{1,\dots,T\}$, we define
\begin{equation*} \label{eq:definition-mcf}
    \mcf{t}{N} \eqdef \sigma\left\{(\epart{s}{N,i}, \ewght{s}{N,i}); 0 \leq s \leq t, 1 \leq i \leq N\right\}\eqsp.
\end{equation*}
The transition probabilities $\{\Lambda_t^N\}_{t=0}^{T-1}$ defined in \eqref{eq:definition-transition-matrix-W} induce an inhomogeneous Markov chain $\{ J_{u} \}_{u = 0}^T$ evolving backward in time as follows. At time $T$, the random index $J_T$ is drawn from the set $\{1, \dots, N\}$ with probability proportional to $(\ewght{T}{N,1},\dots,\ewght{T}{N,N})$.
For any $t \in \{0, \dots, T-1\}$, the index $J_t$ is sampled in the set $\{1,\dots,N\}$ according to $\Lambda^N_{t}(J_{t+1},\cdot)$.
The joint distribution of $\chunk{J}{0}{T}$ is therefore given, for $\chunk{j}{0}{T} \in \{1, \dots, N\}^{T + 1}$, by
\begin{equation} \label{eq:distribution-non-homogeneous}
    \mathbb{P} \left[ \chunk{J}{0}{T} = \chunk{j}{0}{T} \left| \mcf{T}{N} \right. \right] = \frac{\ewght{T}{N,j_T}}{\sumwght[N]{T}} \Lambda_{T-1}^N(j_T, j_{T-1}) \dots \Lambda_0^N(j_1, j_0) \eqsp.
\end{equation}
Thus, the FFBS estimator \eqref{eq:forward-filtering-backward-smoothing} of the fixed-interval smoothing distribution may be written as the conditional expectation
\begin{equation*} \label{eq:EspCond}
    \post[hat]{0:T}{T}[h] = \CPE{h\left(\epart{0}{N,J_0}, \dots, \epart{T}{N,J_T} \right)}{\mcf{T}{N}} \eqsp,
\end{equation*}
where $h$ is a bounded measurable function on $\Xset^{T+1}$. We may therefore construct an unbiased estimator of the FFBS estimator given by
\begin{equation} \label{eq:FFBSi:estimator}
    \widetilde{\phi}_{0:T|T}^N[h] = N^{-1} \sum_{\ell = 1}^N h \left( \epart{0}{N,J_0^\ell}, \dots, \epart{T}{N,J_T^\ell} \right)\eqsp,
\end{equation}
where $\{\chunk{J}{0}{T}^\ell\}_{\ell=1}^N$ are $N$ paths drawn independently given $\mcf{T}{N}$ according to \eqref{eq:distribution-non-homogeneous} and where $h$ is a bounded measurable function on $\Xset^{T+1}$.
This practical estimator was introduced in \cite{godsill:doucet:west:2004} (Algorithm~1, p.~158). An implementation of this estimator whose complexity grows linearly in $N$ is introduced in \cite{douc:garivier:moulines:olsson:2010}.

\section{Non-asymptotic deviation inequalities}
\label{sec:results}
In this Section, the $\rmL_q$-mean error bounds and exponential deviation inequalities of the FFBS and FFBSi algorithms are established for additive functionals of the form \eqref{eq:addfuncbis}. Our results are established under the following assumptions.

\begin{hypA}
\label{assum:mixing}
\begin{enumerate}[(i)]
\item \label{assum:mixing:m}There exists $\left( \sigma_{-},\sigma_{+}  \right) \in (0,\infty)^2$ such that $ \sigma_{-}< \sigma_{+}$ and for any $(x,x^{\prime}) \in \Xset^2$, $ \sigma_{-}\leq m(x,x^{\prime})\leq  \sigma_{+}$ and we set $\rho \eqdef 1 - \sigma_-/\sigma_+$.
\item \label{assum:mixing:int}There exists $c_{-}\in\mathbb{R}_+^*$ such that $\int\chi(\rmd x)g_{0}(x)\geq c_{-}$ and for any $t\in\mathbb{N}^{*},$ $\inf_{x\in\Xset}\int M(x,\rmd x^{\prime})g_{t}(x^{\prime})\geq c_{-}$.
\end{enumerate}
\end{hypA}

\begin{hypA}
\label{assum:boundmodel}
\begin{enumerate}[(i)]
\item For all $t \geq 0$ and  all $x\in \Xset$, $g_{t}(x) >0$.
\item $\underset{t\geq 0}{\sup}|g_{t}|_{\infty} < \infty$.
\end{enumerate}
\end{hypA}

\begin{hypA}
\label{assum:boundalgo}
$\underset{t\geq 1}{\sup}|\vartheta_t|_{\infty} < \infty$, $\underset{t\geq 0}{\sup}|p_t|_{\infty} < \infty$ and $\underset{t\geq 0}{\sup}|\ewght{t}{}|_{\infty} < \infty$ where
    \begin{equation*}
    \omega_0(x) \eqdef \dfrac{d\chi}{d\rho_0}(x)g_{0}(x)
    , \quad
    \omega_t(x,x^{\prime}) \eqdef \dfrac{m(x,x^{\prime})g_{t}(x')}{\vartheta_t(x)p_t(x,x^{\prime})},\forall t \geq 1\eqsp.
    \end{equation*}
\end{hypA}
Assumptions A\ref{assum:mixing} and A\ref{assum:boundmodel} give bounds for the model and assumption A\ref{assum:boundalgo} for quantities related to the algorithm. A\ref{assum:mixing}\eqref{assum:mixing:m}, referred to as the strong mixing condition, is crucial to derive time-uniform exponential deviation inequalities and a time-uniform bound of the variance of the marginal smoothing distribution (see \cite{delmoral:guionnet:2001} and \cite{douc:garivier:moulines:olsson:2010}).
For all function $h$ from a space $\rmE$ to $\mathbb{R}$, $\osc(h)$ is defined by:
\[
\osc(h)\eqdef\sup_{(z,z^{\prime})\in\rmE^2} |h(z)-h(z^{\prime})|\eqsp.
\]

\begin{theorem}\label{Th:MainResult}
Assume A\ref{assum:mixing}--\ref{assum:boundalgo}. For all $q \geq 2$, there exists a constant $C$ (depending only on $q$, $\sigma_-$, $\sigma_+$, $c_-$, $\underset{t\geq 1}{\sup}|\vartheta_t|_{\infty}$ and $\underset{t\geq 0}{\sup}|\ewght{t}{}|_{\infty} $) such that for any $T<\infty$, any integer $r$ and any bounded and measurable functions $\{h_{s}\}_{s=r}^{T}$,
\begin{equation*}
\normL{q}{\phi^N_{0:T|T}\left[\addfunc{T,\size}\right]-\phi_{0:T|T}\left[\addfunc{T,\size}\right]}\leq \frac{C}{\sqrt{N}}\Upsilon^{N}_{\size,T}\left(\sum_{s=\size}^{T}\osc(h_{s})^{2}\right)^{1/2}\eqsp,
\end{equation*}
where $\addfunc{T,\size}$ is defined by \eqref{eq:addfuncbis}, $\phi^N_{0:T|T}$ is defined by \eqref{eq:forward-filtering-backward-smoothing} and where
\[
\Upsilon^{N}_{\size,T} \eqdef \sqrt{\size+1}\left(\sqrt{1+\size}\wedge\sqrt{T-\size+1} + \frac{\sqrt{1+\size}\sqrt{T-\size+1}}{\sqrt{N}}\right)\eqsp.
\]
Similarly,
\begin{equation*}
\normL{q}{\widetilde{\phi}^N_{0:T|T}\left[\addfunc{T,\size}\right]-\phi_{0:T|T}\left[\addfunc{T,\size}\right]}\leq \frac{C}{\sqrt{N}}\Upsilon^{N}_{\size,T}\left(\sum_{s=\size}^{T}\osc(h_{s})^{2}\right)^{1/2}\eqsp,
\end{equation*}
where $\widetilde{\phi}_{0:T|T}^N$ is defined by \eqref{eq:FFBSi:estimator}.
\end{theorem}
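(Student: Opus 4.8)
The plan is to establish the bound first for the FFBS estimator $\phi^N_{0:T|T}$ and then transfer it to the FFBSi estimator $\widetilde\phi^N_{0:T|T}$ by adding the extra Monte Carlo fluctuation coming from the backward sampling of $N$ trajectories. For the FFBS part, the central idea is to telescope the global error over the time steps of the backward recursion \eqref{eq:smoothing:backw_decomposition_recursion_sample}. Writing $\phi^N_{0:T|T}[\addfunc{T,\size}] - \phi_{0:T|T}[\addfunc{T,\size}]$ as a sum of increments, where the $s$-th increment replaces the exact backward kernel $\BK{\filt{s}}$ by its particle version $\BK{\filt[hat]{s}}$ (keeping the already-computed particle objects to its right), one obtains a decomposition into $T-\size+1$ terms. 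Each such term is, conditionally on the forward filtering $\sigma$-field $\mcf{s}{N}$ at the relevant time, a centered quantity whose $\rmL_q$-norm can be controlled. The key technical input is that, under the strong mixing assumption A\ref{assum:mixing}, the backward kernels contract geometrically: composing the backward smoothing operators from time $s$ up to time $t$ applied to a function depending only on coordinate $t$ (or on a block of $\size+1$ coordinates) produces an oscillation that decays like $\rho^{t-s}$. This forgetting property is exactly what allows the individual-time errors, each of size $O(1/\sqrt N)$ times a local oscillation, to be summed against a geometric series rather than linearly in $T$.

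Concretely, I would proceed as follows. First, decompose $\addfunc{T,\size}=\sum_{t=\size}^T h_t$ and treat the error on each $h_t$ separately, so that it suffices to bound $\normL{q}{\phi^N_{0:T|T}[h_t]-\phi_{0:T|T}[h_t]}$ for a single block function $h_t$ on $\Xset^{\size+1}$, and then reassemble via Minkowski's inequality — this is where the $\ell^2$ structure $(\sum_s \osc(h_s)^2)^{1/2}$ will emerge, provided the per-term bounds are square-summable in a suitable sense. Second, for a fixed $h_t$, write the error as a telescoping sum over the backward steps $s = 0,\dots,t$; the increment at step $s$ involves $(\BK{\filt[hat]{s}} - \BK{\filt{s}})$ applied to a function that, by the mixing-induced forgetting, has oscillation at most $C\rho^{t-s}\osc(h_t)$ (or is simply $\osc(h_t)$ when $s$ is within $\size$ of $t$). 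Third, bound each increment's $\rmL_q$-norm: this requires a non-asymptotic $\rmL_q$ control of $\BK{\filt[hat]{s}}(x,\cdot) - \BK{\filt{s}}(x,\cdot)$ uniformly in $x$, which follows from the uniform deviation inequality for the forward particle filter — this is precisely the content of results like \NonUnifExpIneq\ and the $\rmL_q$-norm lemma \GNorm\ available from \cite{douc:garivier:moulines:olsson:2010}; the weights appearing in $\BK{\filt[hat]{s}}$ are bounded above and below thanks to A\ref{assum:mixing} and A\ref{assum:boundalgo}, so the ratio defining the backward kernel is Lipschitz in the filter. Summing $\sum_{s} \rho^{t-s}/\sqrt N$ gives $O(1/\sqrt N)$ per block, but one must be careful: near $t$ there are roughly $\size+1$ terms of order $1$, which produces the factor $\sqrt{\size+1}$, and the interaction between the number of such blocks and the resampling noise produces the $\Upsilon^N_{\size,T}$ factor. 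In particular the term $\sqrt{1+\size}\wedge\sqrt{T-\size+1}$ reflects that the relevant horizon over which errors accumulate coherently is capped either by $\size$ (the block width) or by the total number of blocks, and the $\sqrt{1+\size}\sqrt{T-\size+1}/\sqrt N$ term is the contribution of correlated resampling errors across blocks that do not forget fast enough relative to $N$.

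For the FFBSi estimator, the extra step is the decomposition $\widetilde\phi^N_{0:T|T}[\addfunc{T,\size}] - \phi_{0:T|T}[\addfunc{T,\size}] = (\widetilde\phi^N_{0:T|T}[\addfunc{T,\size}] - \phi^N_{0:T|T}[\addfunc{T,\size}]) + (\phi^N_{0:T|T}[\addfunc{T,\size}] - \phi_{0:T|T}[\addfunc{T,\size}])$; the second term is handled by the FFBS bound just established, and the first term is, conditionally on $\mcf{T}{N}$, an average of $N$ i.i.d.\ centered trajectories (by \eqref{eq:FFBSi:estimator} and the unbiasedness noted in the text), to which a conditional Marcinkiewicz–Zygmund or Rosenthal inequality applies, yielding $\rmL_q$-norm at most $C N^{-1/2}$ times the $\rmL_q$-norm of a single trajectory's additive functional minus its conditional mean. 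That single-trajectory fluctuation must in turn be bounded by $C\,\Upsilon^N_{\size,T}(\sum_s\osc(h_s)^2)^{1/2}$ — and this is again where the mixing/forgetting of the backward chain $\{J_u\}$ enters: successive indices $J_u$ decorrelate geometrically under A\ref{assum:mixing}, so $\mathrm{Var}(\sum_t h_t(\epart{t-\size:t}{N,J}))$ behaves additively up to the same $\size$-dependent constants. I expect the main obstacle to be the bookkeeping in the second paragraph: getting the exact form of $\Upsilon^N_{\size,T}$, in particular disentangling the "$O(T/N)$-type" cross term (which is what improves on the earlier $T/\sqrt N$ bounds of \cite{delmoral:doucet:singh:2010}) from the "$O(\sqrt{T}/\sqrt N)$-type" leading term, requires a careful second-moment/covariance analysis of the per-block errors rather than a crude triangle-inequality sum — naive summation would only give the weaker $T/\sqrt N$ rate, and the gain comes precisely from exploiting that errors at well-separated times are nearly independent (or at least nearly orthogonal in $\rmL_2$) because of the geometric forgetting.
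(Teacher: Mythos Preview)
Your high-level intuitions (geometric forgetting, near-orthogonality of well-separated errors, extra conditional-i.i.d.\ step for FFBSi) are correct, but the concrete FFBS plan contains a genuine gap and diverges from what the paper actually does.

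\textbf{The gap.} You propose to decompose $\addfunc{T,\size}=\sum_t h_t$, bound each $\normL{q}{\phi^N_{0:T|T}[h_t]-\phi_{0:T|T}[h_t]}$ separately, and ``reassemble via Minkowski's inequality --- this is where the $\ell^2$ structure $(\sum_s\osc(h_s)^2)^{1/2}$ will emerge.'' That last claim is false: Minkowski gives $\sum_t \normL{q}{\cdot}$, not $(\sum_t \normL{q}{\cdot}^2)^{1/2}$, and the per-block bound is $C\osc(h_t)/\sqrt N$, so you recover only the $T/\sqrt N$ rate you are trying to improve. You acknowledge in your last paragraph that naive summation fails and that one needs ``near-orthogonality,'' but your plan never supplies the mechanism that delivers it. Orthogonality across blocks $h_t$ does not hold in any exploitable sense once you have already bounded each block in $\rmL_q$; the cancellation must be captured \emph{before} taking norms.

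\textbf{What the paper does instead.} The paper keeps the full functional $\addfunc{T,\size}$ together and telescopes over the \emph{forward} filter time, writing $\error=\sum_{t=0}^T D_{t,T}^N(\addfunc{T,\size})+\sum_{t=0}^T C_{t,T}^N(\addfunc{T,\size})$ (see \eqref{eq:decomp}). The point is that $\{D_{t,T}^N(\addfunc{T,\size})\}_t$ is a genuine martingale-difference sequence with respect to the forward filtration $\{\mcf{t}{N}\}$ (Lemma~\ref{Lem:Upperbounds}\ref{Lem:Upperbounds:ind}), so Burkholder's inequality yields $\normL{q}{\sum_t D_{t,T}^N}\le C(\sum_t \normL{q}{D_{t,T}^N}^2)^{1/2}$ --- this is where the $\ell^2$ structure actually comes from. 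The mixing bound \eqref{eq:bound-G} then gives $|D_{t,T}^N(\addfunc{T,\size})|\le CN^{-1/2}\sum_s\rho^{\max(t-s,s-r-t,0)}\osc(h_s)$, and summing the squares over $t$ produces the $\sqrt{1+\size}(\sqrt{1+\size}\wedge\sqrt{T-\size+1})$ factor (Proposition~\ref{Prop:NormD}). The second piece $C_{t,T}^N$ is the ``centering correction'' coming from replacing a random normalizer by its conditional mean; each term is a product of two centered averages and is bounded by $C/N$ times a geometric weight (Proposition~\ref{Prop:NormC}), so its sum contributes the $\sqrt{1+\size}\sqrt{T-\size+1}/\sqrt N$ term in $\Upsilon^N_{\size,T}$. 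Your proposal does not separate a martingale piece from a second-order correction; without that split you cannot get the two different scalings $\sqrt{T/N}$ and $T/N$ that make up $\Upsilon^N_{\size,T}$.

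\textbf{FFBSi part.} Here your plan is essentially the paper's: split off $\widetilde\phi^N-\phi^N$ and control it as a conditionally i.i.d.\ average. The paper makes this precise via a \emph{backward} martingale decomposition in $u$ (with filtration $\mathcal{G}^N_{u,T}$) and Burkholder again, using the Doeblin-type forgetting of the backward chain $\{J_u\}$ (Lemma~\ref{Lem:BackInitForget}); your Marcinkiewicz--Zygmund/Rosenthal idea for the i.i.d.\ trajectories is not enough on its own because one still needs the geometric decay across $u$ to get $(\sum_s\osc(h_s)^2)^{1/2}$ rather than $\sum_s\osc(h_s)$.
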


\begin{remark}
In the particular cases where $\size = 0$ and $\size =T$, $\Upsilon^{N}_{0,T} = 1 + \sqrt{T+1/N}$ and $\Upsilon^{N}_{T,T} = \sqrt{T+1}(1 + \sqrt{T+1/N})$. Then, Theorem~\ref{Th:MainResult} gives
\[
\normL{q}{\phi^N_{0:T|T}\left[\addfunc{T,0}\right]-\phi_{0:T|T}\left[\addfunc{T,0}\right]}\leq C\frac{\left(\sum_{s=0}^{T}\osc(h_{s})^{2}\right)^{1/2}}{\sqrt{N}}\left(1 + \sqrt{\frac{T+1}{N}}\right)\eqsp,
\]
and
\[
\normL{q}{\phi^N_{0:T|T}\left[\addfunc{T,T}\right]-\phi_{0:T|T}\left[\addfunc{T,T}\right]}\leq C\sqrt{\frac{T+1}{N}}\left(1 + \sqrt{\frac{T+1}{N}}\right)\osc(h_{T})^{2}\eqsp.
\]
As stated in Section~\ref{sec:intro}, theses bounds improve the results given in \cite{delmoral:doucet:singh:2010} for the FFBS estimator.
\end{remark}

\begin{remark}
The dependence on $1/\sqrt{N}$ is hardly surprising. Under the stated
strong mixing condition, it is known that the $\rmL_{q}$-norm of the
marginal smoothing estimator $\phi^N_{t-\size:t|T}[h]$, $t \in \{
\size, \dots, T\}$ is uniformly bounded in time by
$\normL{q}{\phi^N_{t-\size:t|T}[h]} \leq C \osc(h) N^{-1/2}$ (where
$C$ depends only on $q$, $\sigma_-$, $\sigma_+$, $c_-$,
$\underset{t\geq 1}{\sup}|\vartheta_t|_{\infty}$ and $\underset{t\geq
0}{\sup}|\ewght{t}{}|_{\infty} $). The dependence in $\sqrt{T}$
instead of $T$ reflects the forgetting property of the filter and the
backward smoother. As for $\size \leq s < t \leq T$, the estimators
$\phi^N_{s-\size:s|T}[h_s]$ and $\phi^N_{t-\size:t|T}[h_t]$ become
asymptotically independent as $(t-s)$ gets large, the $\rmL_{q}$-norm
of the sum $\sum_{t=r}^T \phi^N_{t-\size:t|T}[h_t]$ scales as the sum
of a mixing sequence (see \cite{davidson:1997}).
\end{remark}

\begin{remark}
It is easy to see that the scaling in $\sqrt{T/N}$ cannot in general
be improved. Assume that the kernel $m$ satisfies $m(x,x')=m(x')$ for
all $(x,x') \in \Xset \times \Xset$. In this case, for any $t \in
\{0,\dots,T\}$, the filtering distribution is
\begin{equation*}
\phi_t[h_t] = \dfrac{\int m(x)g_{t}(x)h_t(x) \rmd x}{\int m(x)g_{t}(x)
\rmd x}\eqsp,
\end{equation*}
and the backward kernel is the identity kernel. Hence, the
fixed-interval smoothing distribution coincides with the filtering
distribution. If we assume that we apply the bootstrap filter for
which $p_s(x,x')=m(x')$ and $\vartheta_s(x)=1$, the estimators $\{
\phi_{t|T}^N[h_t]\}_{t\in\{0,\dots,T\}}$ are independent
random variables corresponding to importance sampling estimators. It
is easily seen that
\begin{equation*}
\normL{q}{\sum_{t=0}^T \phi_t^N[h_t] - \phi_t[h_t]} \leq C \underset{0\leq t\leq T}{\max}\left\{\osc(h_{t})\right\}
\sqrt{\frac{T}{N}}\eqsp.
\end{equation*}
\end{remark}

\begin{remark}
The independent case also clearly illustrates why the path-space
methods are sub-optimal (see also \cite{briers:doucet:maskell:2010}
for a discussion). When applied to the independent case
(for all $(x,x')\in \Xset \times \Xset$, $m(x,x')=m(x')$ and
$p_s(x,x')=m(x')$), the asymptotic variance of the path-space
estimators is given in \cite{delmoral:2004} by
\begin{align*}
\Gamma_{0:T|T}[S_{T,0}]&\\
& \hspace{-1cm}\eqdef\sum_{t=0}^{T-1}\frac{m(g_{T}^{2})}{m(g_{T})^{2}}\frac{m(g_{t}[h_{t}-\phi_{t}(h_{t})]^{2})}{m(g_{t})}+\frac{m(g_{T}^{2}[h_{T}-\phi_{T}(h_{T})]^{2})}{m(g_{T})^{2}}\\
&\hspace{-.8cm}+\sum_{t=1}^{T-1}\left\{\sum_{s=0}^{t-1}\frac{m(g_{t}^{2})}{m(g_{t})^{2}}\frac{m(g_{s}[h_{s}-\phi_{s}(h_{s})]^{2})}{m(g_{s})}+\frac{m(g_{t}^{2}[h_{t}-\phi_{t}(h_{t})]^{2})}{m(g_{t})^{2}}\right\}\\
&\hspace{-.8cm}+\frac{\chi(g_{0}^{2}[h_{0}-\phi_{0}(h_{0})]^{2})}{\chi(g_{0})^{2}}\eqsp.
\end{align*}
The asymptotic variance thus increases as $T^2$ and hence, under the
stated assumptions, the variance of the path-space methods is of order
$T^2/N$. It is believed (and proved in some specific scenarios) that
the same scaling holds for path-space methods for non-degenerated
Markov kernel (the result has been formally established for strongly
mixing kernel under the assumption that $\sigma_-/\sigma_+$ is
sufficiently close to $1$).
\end{remark}
We provide below a brief outline of the main steps of the proofs (a detailed proof is given in Section~\ref{sec:proof:theorem:mainresult}). Following \cite{douc:garivier:moulines:olsson:2010}, the proofs rely on a decomposition of the smoothing error. For all $0\leq t \leq T$ and all bounded and measurable function $h$ on $\Xset^{T+1}$ define the kernel $\rmL_{t,T}:\Xset^{t+1}\times \sigmaX^{\otimes T+1}\rightarrow [0,1]$ by
\begin{equation*}\label{eq:ldroit}
\rmL_{t,T}h(x_{0:t})\eqdef \int \prod_{u=t+1}^TM(x_{u-1},\rmd x_u)g_{u}(x_u)h(x_{0:T})\eqsp.
\end{equation*}
The fixed-interval smoothing distribution may then be expressed, for all bounded and measurable function $h$ on $\Xset^{T+1}$, by
\begin{equation*}
\phi_{0:T|T}[h] = \frac{\phi_{0:t|t}\left[\rmL_{t,T}h\right]}{\phi_{0:t|t}\left[\rmL_{t,T}\1\right]}\eqsp,
\end{equation*}
and this suggests to decompose the smoothing error as follows
\begin{align}\label{eq:def-error}
\errorh &\eqdef \phi^N_{0:T|T}\left[h\right]-\phi_{0:T|T}\left[h\right]\\
&= \sum_{t=0}^T\frac{\phi_{0:t|t}^N\left[\rmL_{t,T}h\right]}{\phi_{0:t|t}^N\left[\rmL_{t,T}\1\right]}-\frac{\phi_{0:t-1|t-1}^N\left[\rmL_{t-1,T}h\right]}{\phi_{0:t-1|t-1}^N\left[\rmL_{t-1,T}\1\right]}\nonumber\eqsp,
\end{align}
where we used the convention
\begin{equation*}
\frac{\phi_{0:-1|-1}^N\left[\rmL_{-1,T}h\right]}{\phi_{0:-1|-1}^N\left[\rmL_{-1,T}\1\right]} = \frac{\phi_0\left[\rmL_{0,T}h\right]}{\phi_0\left[\rmL_{0,T}\1\right]} = \phi_{0:T|T}[h]\eqsp.
\end{equation*}
Furthermore, for all $0\leq t\leq T$,
\begin{align*}
\phi_{0:t|t}^N\left[\rmL_{t,T}h\right] &=\int\phi_{0:t|t}^N(\rmd x_{0:t})\rmL_{t,T}h(x_{0:t})\\
&=\int\phi_{t}^N(\rmd x_{t})\rmB_{\phi_{t-1}^N}(x_t,\rmd x_{t-1})\cdots\rmB_{\phi_{0}^N}(x_1,\rmd x_{0})\rmL_{t,T}h(x_{0:t})\\
&=\int \phi_{t}^N(\rmd x_t)\mathcal{L}_{t,T}^Nh(x_t)\eqsp,
\end{align*}
where $\mathcal{L}_{t,T}^N$ and $\mathcal{L}_{t,T}$ are two kernels on $\Xset \times \sigmaX^{\otimes (T+1)}$ defined for all $x_t\in\Xset$ by
\begin{align}
\mathcal{L}_{t,T}h(x_t) &\eqdef \int\rmB_{\phi_{t-1}}(x_t,\rmd x_{t-1})\cdots\rmB_{\phi_{0}}(x_1,\rmd x_{0})\rmL_{t,T}h(x_{0:t})\label{eq:defL}\\
\mathcal{L}_{t,T}^Nh(x_t) &\eqdef \int\rmB_{\phi_{t-1}^N}(x_t,\rmd x_{t-1})\cdots\rmB_{\phi_{0}^N}(x_1,\rmd x_{0})\rmL_{t,T}h(x_{0:t})\label{eq:defLN}\eqsp.
\end{align}
For all $1\leq t \leq T$ we can write
\begin{multline*}
\frac{\phi_{0:t|t}^N[\rmL_{t,T}h]}{\phi_{0:t|t}^N[\rmL_{t,T}\1]}-\frac{\phi_{0:t-1|t-1}^N[\rmL_{t-1,T}h]}{\phi_{0:t-1|t-1}^N[\rmL_{t-1,T}\1]}
=\frac{\phi_{t}^N[\mathcal{L}_{t,T}^Nh]}{\phi_{t}^N[\mathcal{L}_{t,T}^N\1]} - \frac{\phi_{t-1}^N[\mathcal{L}_{t-1,T}^Nh]}{\phi_{t-1}^N[\mathcal{L}_{t-1,T}^N\1]}\\
=\frac{1}{\phi_{t}^N[\mathcal{L}_{t,T}^N\1]}\left(\phi_{t}^N[\mathcal{L}_{t,T}^Nh] - \frac{\phi_{t-1}^N[\mathcal{L}_{t-1,T}^Nh]}{\phi_{t-1}^N[\mathcal{L}_{t-1,T}^N\1]}\phi_{t}^N[\mathcal{L}_{t,T}^N\1]\right)\eqsp,
\end{multline*}
and then,
\begin{equation}\label{Eq:Err}
\errorh = \sum_{t=0}^{T}\frac{N^{-1}\sum_{\ell=1}^{N}\ewght{t}{N,\ell} G_{t,T}^Nh(\epart{t}{N,\ell})}{N^{-1}\sum_{\ell=1}^{N}\ewght{t}{N,\ell} \mathcal{L}_{t,T}\1(\epart{t}{N,\ell})}\eqsp,
\end{equation}
with  $G_{t,T}^N$ is a kernel on $\Xset \times \sigmaX^{\otimes (T+1)}$ defined, for all $x_t\in\Xset$ and all bounded and measurable function $h$ on $\Xset^{T+1}$, by
\begin{equation*}
\label{eq:defG}
G_{t,T}^Nh(x_t)\eqdef \mathcal{L}_{t,T}^Nh(x_t) - \frac{\phi_{t-1}^N[\mathcal{L}_{t-1,T}^Nh]}{\phi_{t-1}^N[\mathcal{L}_{t-1,T}^N\1]}\mathcal{L}_{t,T}^N\1(x_t)\eqsp,
\end{equation*}
where, by the same convention as above,
\[
G_{0,T}^Nh(x_0)\eqdef \rmL_{0,T}h(x_0) - \frac{\phi_{0}[\mathcal{L}_{0,T}h]}{\phi_{0}[\mathcal{L}_{0,T}\1]}\mathcal{L}_{0,T}\1(x_0)\eqsp.
\]

Two families of random variables $\left\{C_{t,T}^N(f)\right\}_{t=0}^{T}$ and $\left\{D_{t,T}^N(f)\right\}_{t=0}^{T}$ are now introduced to transform (\ref{Eq:Err}) into a suitable decomposition to compute an upper bound for the $\rmL_q$-mean error. As shown in Lemma~\ref{Lem:Upperbounds}, the random variables $\{\ewght{t}{N,\ell} G_{t,T}^Nf(\epart{t}{N,\ell})\}_{\ell=1}^{N}$ are centered given $\mathcal{F}_{t-1}^N$. The idea is to replace $N^{-1}\sum_{\ell=1}^{N}\ewght{t}{N,\ell} \mathcal{L}_{t,T}\1(\epart{t}{N,\ell})$ in \eqref{Eq:Err} by its conditional expectation given $\mathcal{F}_{t-1}^N$ to get a martingale difference. This conditional expectation is computed using the following intermediate result. For any measurable function $h$ on $\Xset$ and any $t\in\{0,\dots,T\}$,
\begin{equation}
\label{eq:whExpect}
\CExp{\ewght{t}{N,1}h(\epart{t}{N,1})}{\mathcal{F}_{t-1}^N} = \frac{\phi_{t-1}^N\left[Mg_th\right]}{\phi_{t-1}^N[\vartheta_t]}\eqsp.
\end{equation}
Indeed,
\begin{align}
&\CExp{\ewght{t}{N,1}h(\epart{t}{N,1})}{\mathcal{F}_{t-1}^N} \nonumber\\
&= \CExp{\frac{\m(\epart{t-1}{N,I_t^{N,1}},\epart{t}{N,1}) g_t(\epart{t}{N,1})}{\adjfunc{t}{t}{\epart{t-1}{N,I_t^{N,1}}} \kiss{t}{t}(\epart{t-1}{N,I_t^{N,1}},\epart{t}{N,1})}h(\epart{t}{N,1})}{\mathcal{F}_{t-1}^N} \nonumber\\
&= \left(\sum_{i=1}^N\ewght{t-1}{N,i} \adjfunc{t}{t}{\epart{t-1}{N,i}}\right)^{-1}\sum_{i=1}^N\int\ewght{t-1}{N,i} \adjfunc{t}{t}{\epart{t-1}{N,i}} \kiss{t}{t}(\epart{t-1}{N,i},x)\frac{M(\epart{t-1}{N,i},\rmd x) g_t(x)}{\adjfunc{t}{t}{\epart{t-1}{N,i}} \kiss{t}{t}(\epart{t-1}{N,i},x)}h(x) \nonumber\\
&= \left(\sum_{i=1}^N\ewght{t-1}{N,i} \adjfunc{t}{t}{\epart{t-1}{N,i}}\right)^{-1}\sum_{i=1}^N\int\ewght{t-1}{N,i}M(\epart{t-1}{N,i},\rmd x) g_t(x)h(x) \nonumber\\
&=\frac{\phi_{t-1}^N\left[Mg_th\right]}{\phi_{t-1}^N[\vartheta_t]}\eqsp. \nonumber
\end{align}
This result, applied with the function $h = \mathcal{L}_{t,T}\1$, yields
\begin{equation*}
\mathbb{E}\left[\left.\ewght{t}{N,1} \mathcal{L}_{t,T}\1(\epart{t}{N,1})\right|\mathcal{F}_{t-1}^N\right] = \frac{\phi_{t-1}^N\left[Mg_t \mathcal{L}_{t,T}\1\right]}{\phi_{t-1}^N[\vartheta_t]} = \frac{\phi_{t-1}^N\left[\mathcal{L}_{t-1,T}\1\right]}{\phi_{t-1}^N[\vartheta_t]}\eqsp.
\end{equation*}

 For any $0 \leq t \leq T$, define for all bounded and measurable function $h$ on $\Xset^{T+1}$,

\begin{align}
D_{t,T}^N(h) &\eqdef \mathbb{E}\left[\left.\ewght{t}{N,1} \frac{\mathcal{L}_{t,T}\1(\epart{t}{N,1})}{|\mathcal{L}_{t,T}\1|_{\infty}}\right|\mathcal{F}_{t-1}^N\right]^{-1}N^{-1}\sum_{\ell=1}^{N}\ewght{t}{N,\ell} \frac{G_{t,T}^Nh(\epart{t}{N,\ell})}{|\mathcal{L}_{t,T}\1|_{\infty}}\label{eq:defD}\\
&= \frac{\phi_{t-1}^N[\vartheta_t]}{\phi_{t-1}^N\left[\frac{\mathcal{L}_{t-1,T}\1}{|\mathcal{L}_{t,T}\1|_{\infty}}\right]}N^{-1}\sum_{\ell=1}^{N}\ewght{t}{N,\ell} \frac{G_{t,T}^Nh(\epart{t}{N,\ell})}{|\mathcal{L}_{t,T}\1|_{\infty}}\eqsp,\nonumber
\end{align}

\begin{multline}\label{eq:defC}
\hspace{0.7cm}C_{t,T}^N(h) \eqdef \left[\frac{1}{N^{-1}\sum_{i=1}^{N}\ewght{t}{N,i} \frac{\mathcal{L}_{t,T}\1(\epart{t}{N,i})}{|\mathcal{L}_{t,T}\1|_{\infty}}}-\frac{\phi_{t-1}^N[\vartheta_t]}{\phi_{t-1}^N\left[\frac{\mathcal{L}_{t-1,T}\1}{|\mathcal{L}_{t,T}\1|_{\infty}}\right]}\right] \\
\times N^{-1}\sum_{\ell=1}^{N}\ewght{t}{N,\ell} \frac{G_{t,T}^Nh(\epart{t}{N,\ell})}{|\mathcal{L}_{t,T}\1|_{\infty}}
\eqsp.
\end{multline}
Using these notations, \eqref{Eq:Err} can be rewritten as follows:
\begin{equation}\label{eq:decomp}
\errorh = \sum_{t=0}^{T}D_{t,T}^N(h) + \sum_{t=0}^{T}C_{t,T}^N(h)\eqsp.
\end{equation}
For any $q \geq 2$, the derivation of the upper bound relies on the triangle inequality:
\begin{equation*}\label{Eq:NormL}
\normL{q}{ \error } \leq \normL{q}{\sum_{t=0}^{T}D_{t,T}^N(\addfunc{T,\size})} + \sum_{t=0}^{T}\normL{q}{C_{t,T}^N(\addfunc{T,\size})}\eqsp,
\end{equation*}
where $\addfunc{T,\size}$ is defined in \eqref{eq:addfuncbis}. The proof for the FFBS estimator $\phi^N_{0:T|T}$ is completed by using Proposition~\ref{Prop:NormD} and Proposition~\ref{Prop:NormC}.
According to  \eqref{eq:decomp}, the smoothing error can be decomposed into a sum of two terms which are considered separately. The first one is a martingale whose $\rmL_q$-mean error is upper-bounded by $\sqrt{\left(T+1\right)/N}$ as shown in Proposition \ref{Prop:NormD}. The second one is a sum of products, $\rmL_q$-norm of which being bounded by $1/N$ in Proposition \ref{Prop:NormC}.

The end of this section is devoted to the exponential deviation inequality for the error $\error$ defined by \eqref{eq:def-error}. We use the decomposition of $\error$ obtained in \eqref{eq:decomp} leading to a similar dependence on the ratio $(T+1)/N$. The martingale term $D^N_{t,T}(\addfunc{T,\size})$ is dealt with using the Azuma-Hoeffding inequality while the term $C^N_{t,T}(\addfunc{T,\size})$ needs a specific Hoeffding-type inequality for ratio of random variables.

\begin{theorem}\label{Th:ExpIneq}
Assume A\ref{assum:mixing}--\ref{assum:boundalgo}. There exists a constant $C$ (depending only on $\sigma_-$, $\sigma_+$, $\size$, $c_-$, $\underset{t\geq 1}{\sup}|\vartheta_t|_{\infty}$ and $\underset{t\geq 0}{\sup}|\ewght{t}{}|_{\infty} $)  such that for any $T<\infty$, any $N \geq 1$, any $\varepsilon > 0$, any integer $r$, and any bounded and measurable functions $\{h_{s}\}_{s=r}^{T}$,
\begin{multline*}
\mathbb{P}\left\{ \left|\phi_{0:T|T}\left[\addfunc{T,\size}\right]-\phi_{0:T|T}^N\left[\addfunc{T,\size}\right]\right| > \varepsilon \right\} \\ \leq
 2\exp\left(-\frac{CN\varepsilon^2}{\Theta_{\size,T}\sum_{s=\size}^{T}\osc(h_{s})^{2}}\right)+8\exp\left(-\frac{CN\varepsilon}{(1+\size)\sum_{s=\size}^{T}\osc(h_{s})}\right)\eqsp,
\end{multline*}
where $\addfunc{T,\size}$ is defined by \eqref{eq:addfuncbis}, $\phi^N_{0:T|T}$ is defined by \eqref{eq:forward-filtering-backward-smoothing} and where
\begin{equation}
\label{eq:defTheta}
\Theta_{\size,T} \eqdef (1+\size)\left\{(1+\size)\wedge(T-\size+1)\right\}\eqsp.
\end{equation}
Similarly,
\begin{multline*}
\mathbb{P}\left\{ \left|\phi_{0:T|T}\left[\addfunc{T,\size}\right]-\widetilde{\phi}_{0:T|T}^N\left[\addfunc{T,\size}\right]\right| > \varepsilon \right\} \\ \leq 4\exp\left(-\frac{CN\varepsilon^2}{\Theta_{\size,T}\sum_{s=\size}^{T}\osc(h_{s})^{2}}\right)+8\exp\left(-\frac{CN\varepsilon}{(1+\size)\sum_{s=\size}^{T}\osc(h_{s})}\right)\eqsp,
\end{multline*}
where $\widetilde{\phi}_{0:T|T}^N$ is defined by \eqref{eq:FFBSi:estimator}.
\end{theorem}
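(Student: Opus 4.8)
The plan is to build the exponential deviation inequality for $\error = \sum_{t=0}^T D_{t,T}^N(\addfunc{T,\size}) + \sum_{t=0}^T C_{t,T}^N(\addfunc{T,\size})$ by treating the two sums separately, exactly in parallel with how the $\rmL_q$-bound of Theorem~\ref{Th:MainResult} was obtained from Propositions~\ref{Prop:NormD} and~\ref{Prop:NormC}. For a threshold $\varepsilon$ I would split the event $\{|\error| > \varepsilon\}$ into $\{|\sum_t D_{t,T}^N| > \varepsilon/2\}$ and $\{|\sum_t C_{t,T}^N| > \varepsilon/2\}$ and bound each probability; the final bound is the sum, which is why one expects a ``$2\exp(\cdot) + 8\exp(\cdot)$'' shape, the two Gaussian-type terms coming from the martingale part (one for each tail, or from an intermediate filtering-error event) and the eight from the $C$-part.

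\textbf{The martingale term.} First I would use that, by Lemma~\ref{Lem:Upperbounds}, $\{\ewght{t}{N,\ell} G_{t,T}^N h(\epart{t}{N,\ell})\}_{\ell}$ is centered given $\mcf{t-1}{N}$, so that $\sum_{t=0}^T D_{t,T}^N(\addfunc{T,\size})$ is a sum of martingale increments with respect to $\{\mcf{t}{N}\}$. Each increment $D_{t,T}^N(\addfunc{T,\size})$ is almost surely bounded: the normalising factor $\phi_{t-1}^N[\vartheta_t]/\phi_{t-1}^N[\mathcal{L}_{t-1,T}\1 /|\mathcal{L}_{t,T}\1|_\infty]$ is controlled from above and below by $\sigma_-,\sigma_+,c_-,\sup_t|\vartheta_t|_\infty$ under A\ref{assum:mixing}--A\ref{assum:boundmodel} (this is where the strong mixing condition enters, giving the forgetting that turns the naive $T$ into $\sqrt{T}$-type behaviour via the geometric decay of $G_{t,T}^N\addfunc{T,\size}$ in the ``distance'' between $t$ and the support $\size$ of the summands). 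Combining these bounds with the telescoping structure of $\addfunc{T,\size}=\sum_{s=\size}^T h_s$, one gets $|D_{t,T}^N(\addfunc{T,\size})| \le c N^{-1}\,b_t$ a.s., where $\sum_t b_t^2 \lesssim \Theta_{\size,T}\sum_{s=\size}^T \osc(h_s)^2$. Then Azuma--Hoeffding applied to $N\sum_t D_{t,T}^N$ (a sum of $NT$-ish increments of size $c\,b_t$, conditionally centered) yields a bound of the form $2\exp(-CN\varepsilon^2/(\Theta_{\size,T}\sum_s \osc(h_s)^2))$, which is the first term claimed.

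\textbf{The $C$-term.} Each $C_{t,T}^N(\addfunc{T,\size})$ is a product of two factors: a ``fluctuation of a reciprocal'' factor $[1/(N^{-1}\sum_i \ewght{t}{N,i}\mathcal{L}_{t,T}\1(\epart{t}{N,i})/|\mathcal{L}_{t,T}\1|_\infty) - \phi_{t-1}^N[\vartheta_t]/\phi_{t-1}^N[\mathcal{L}_{t-1,T}\1/|\mathcal{L}_{t,T}\1|_\infty]]$ and the same centered average $N^{-1}\sum_\ell \ewght{t}{N,\ell}G_{t,T}^N\addfunc{T,\size}(\epart{t}{N,\ell})/|\mathcal{L}_{t,T}\1|_\infty$ that appears in $D_{t,T}^N$. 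Both factors are averages of i.i.d.-conditionally-on-$\mcf{t-1}{N}$ bounded terms whose conditional expectations we know (from \eqref{eq:whExpect}), and each concentrates at rate $N^{-1/2}$; their product therefore concentrates at rate $N^{-1}$, which matches the second exponential term being linear rather than quadratic in $\varepsilon$. To make this rigorous I would invoke the ``Hoeffding-type inequality for a ratio of random variables'' alluded to in the text (of the kind in \NonUnifExpIneq{} / \GNorm): write $\{|C_{t,T}^N| > u\}\subset\{|\text{denominator fluctuation}| > \sqrt u\}\cup\{|\text{numerator}|>\sqrt u\}$, bound each by a Hoeffding/Bernstein estimate using the a.s.\ bounds on $\ewght{t}{N,\ell}$, $\mathcal{L}_{t,T}\1$, and $G_{t,T}^N\addfunc{T,\size}$, sum over $t$ after distributing $\varepsilon/2$ among the $T+1$ terms proportionally to a weight reflecting the forgetting decay, and collect. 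This produces the $8\exp(-CN\varepsilon/((1+\size)\sum_s\osc(h_s)))$ contribution; the factor $8$ is the $2$ tails $\times$ $2$ factors $\times$ $2$ (numerator appearing in both $C$ and an auxiliary filtering event) bookkeeping.

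\textbf{From FFBS to FFBSi.} For $\errortilde$ I would add and subtract the FFBS estimator, $\widetilde\phi_{0:T|T}^N[\addfunc{T,\size}] - \phi_{0:T|T}[\addfunc{T,\size}] = (\widetilde\phi_{0:T|T}^N - \phi_{0:T|T}^N)[\addfunc{T,\size}] + (\phi_{0:T|T}^N - \phi_{0:T|T})[\addfunc{T,\size}]$, split $\varepsilon$ into $\varepsilon/2+\varepsilon/2$, bound the second piece by the FFBS result just proved, and for the first piece use that, given $\mcf{T}{N}$, $\widetilde\phi_{0:T|T}^N[\addfunc{T,\size}]$ is an average of $N$ i.i.d.\ copies of $\addfunc{T,\size}(\epart{0}{N,J_0^\ell},\dots,\epart{T}{N,J_T^\ell})$, each bounded (in oscillation) by $\sum_{s=\size}^T\osc(h_s)$ but, crucially, exhibiting the same mixing-induced variance reduction so that its conditional variance is of order $\Theta_{\size,T}\sum_s\osc(h_s)^2 / N$ rather than $(\sum_s\osc(h_s))^2/N$; a conditional Hoeffding inequality then gives a bound of the form $2\exp(-CN\varepsilon^2/(\Theta_{\size,T}\sum_s\osc(h_s)^2))$, and adding it to the (doubled-constant) FFBS bound gives the $4\exp(\cdot) + 8\exp(\cdot)$ statement. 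The main obstacle, in all of this, is the careful accounting of the forgetting property: showing that $G_{t,T}^N\addfunc{T,\size}$, and hence each increment, depends on only ``$O(1+\size)$ many'' of the summands $h_s$ up to geometrically small corrections, so that the sum over $t$ of the squared increment bounds collapses to $\Theta_{\size,T}\sum_s\osc(h_s)^2$ with the correct $(1+\size)\{(1+\size)\wedge(T-\size+1)\}$ factor — this is precisely the quantitative input that distinguishes the $T/N$ scaling from the naive $T^2/N$ one, and it is where the bulk of the estimates from \cite{douc:garivier:moulines:olsson:2010} are needed.
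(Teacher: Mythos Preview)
Your overall architecture matches the paper's: decompose $\error$ as $\sum_t D_{t,T}^N + \sum_t C_{t,T}^N$, handle the $D$-part by Azuma--Hoeffding on the $N(T+1)$ individual increments (this is Proposition~\ref{Prop:ExpD}), handle each $C_{t,T}^N$ via the product structure $U_{t,T}^N V_{t,T}^N W_{t,T}^N$ with a ratio-type Hoeffding bound (this is Proposition~\ref{Prop:ExpC}), and for the FFBSi split off $\errorsmall[\addfunc{T,\size}]=\widetilde\phi_{0:T|T}^N-\phi_{0:T|T}^N$. Two steps, however, do not go through as you describe.

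\textbf{Summing the $C$-terms.} Distributing $\varepsilon/2$ among the $T+1$ terms and taking a union bound yields, even with optimal allocation, a bound of the form $4(T+1)\exp\bigl(-CN\varepsilon/((1+\size)\sum_s\osc(h_s))\bigr)$; the polynomial prefactor in $T$ cannot be absorbed into the constant and does not match the claimed ``$8\exp(\cdot)$''. The paper circumvents this with a Chernoff-type lemma (Lemma~\ref{Lem:SumExpDev}): once each $|C_{t,T}^N|$ has an exponential tail $\mathbb{P}\{|C_{t,T}^N|>\varepsilon\}\le A\rme^{-B_t\varepsilon}$, one bounds the moment generating function of $|\sum_t C_{t,T}^N|$ via Minkowski and $\mathbb{E}|C_{t,T}^N|^q\le Aq!/B_t^q$, then applies Markov, obtaining $\frac{A}{1-\gamma}\exp\bigl(-\gamma B\varepsilon/(T+1)\bigr)$ with $B^{-1}=(T+1)^{-1}\sum_tB_t^{-1}$ and no $(T+1)$ prefactor.

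\textbf{The FFBSi residual.} A plain conditional Hoeffding on the $N$ i.i.d.\ (given $\mcf{T}{N}$) copies of $\addfunc{T,\size}(\epart{0:T}{N,J_{0:T}^\ell})$ uses only the range bound $\sum_s\osc(h_s)$ and therefore produces a denominator $(\sum_s\osc(h_s))^2$, not $\Theta_{\size,T}\sum_s\osc(h_s)^2$; Hoeffding does not see the variance reduction you invoke, and for small $\size$ the resulting bound is too weak by a factor $\sim T$. The paper instead reuses the backward-martingale decomposition already introduced in the proof of Theorem~\ref{Th:MainResult}: $\errorsmall[\addfunc{T,\size}]=N^{-1}\sum_{\ell=1}^N\sum_{u=0}^T\zeta_u^{N,\ell}$, where each $\zeta_u^{N,\ell}$ is $\mathcal{G}_{u,T}^N$-measurable, centered given $\mathcal{G}_{u+1,T}^N$, and bounded via Lemma~\ref{Lem:BackInitForget} by $\sum_{t=\size}^{(u+\size)\wedge T}\rho^{(u-t)\vee 0}\osc(h_t)$. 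Azuma--Hoeffding on these $N(T+1)$ increments then gives directly the $2\exp\bigl(-CN\varepsilon^2/(\Theta_{\size,T}\sum_s\osc(h_s)^2)\bigr)$ term. If you prefer to argue with the i.i.d.\ structure, you would need Bernstein rather than Hoeffding, together with an explicit bound on $\mathrm{Var}\bigl(\addfunc{T,\size}(\epart{0:T}{N,J_{0:T}^1})\,\big|\,\mcf{T}{N}\bigr)$---and establishing that variance bound is essentially the content of Lemma~\ref{Lem:BackInitForget}.
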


\section{Monte-Carlo Experiments}
\label{sec:MCexp}
In this section, the performance of the FFBSi algorithm is evaluated through simulations and compared to the path-space method.
\subsection{Linear gaussian model}
Let us consider the following model:
\begin{equation*}
\begin{cases}
X_{t+1} &= \phi X_t + \sigma_uU_t\eqsp,\\
Y_t &= X_t +  \sigma_vV_t\eqsp,
\end{cases}
\end{equation*}
where $X_0$ is a zero-mean random variable with variance $\frac{\sigma_u^2}{1-\phi^2}$, $\left\{U_t\right\}_{t\geq 0}$ and $\left\{V_t\right\}_{t\geq 0}$ are two sequences of independent and identically distributed standard gaussian random variables (independent from $X_0$). The parameters $\left(\phi,\sigma_u,\sigma_v\right)$ are assumed to be known. Observations were generated using $\phi = 0.9$, $\sigma_u = 0.6$ and $\sigma_v = 1$. Table~\ref{tab:LGMvar} provides the empirical variance of the estimation of the unnormalized smoothed additive functional $\mathcal{I}_T \eqdef \sum_{t=0}^{T}\CExp{X_t}{Y_{0:T}}$ given by the path-space and the FFBSi methods over $250$ independent Monte Carlo experiments.
\begin{table}[h!]
\begin{center}
\caption{Empirical variance for different values of $T$ and $N$.}
\label{tab:LGMvar}
\begin{minipage}[t]{\linewidth}
\begin{tabular}{|c|ccccccccc|}
\multicolumn{3}{l}{\textbf{Path-space}}\\
\hline\hline
 \backslashbox{$T$}{$N$}            &   300     & 500   & 750   & 1000 & 1500  & 5000    & 10000  & 15000 & 20000  \\
\hline
  300                               &	137.8   & 119.4	& 63.7	& 46.1 & 36.2  & 12.8	 & 7.1	  & 3.8	  & 3.0\\
500                 &290.0&	215.3&	192.5&	161.9&	80.3   &   30.1	&  14.9	&11.3&	7.4\\
750                 &474.9&	394.5	&332.9&	250.5&	206.8  &   71.0	&35.6	& 24.4	&21.7\\
1000               &	673.7&	593.2&	505.1	&483.2&	326.4  &   116.4	&70.8	&37.9	&34.6\\
1500               &	1274.6&	1279.7&	916.7&	804.7&	655.1    &   233.9	&163.1	&89.7	&80.0\\
\hline\hline
\end{tabular}
\begin{tabular}{|c|ccccc|}
\multicolumn{3}{l}{\textbf{FFBSi}}\\
\hline\hline
 \backslashbox{$T$}{$N$}           &   300 & 500 & 750 & 1000 & 1500    \\
 \hline
  300              &	5.1&	3.1&	2.3	&1.4&	1.0\\
500               &	9.7	&5.1	&3.7	&2.6&	2.2\\
750               	&11.2	&7.1&	4.9	&3.7	&2.6\\
1000               &	16.5	&10.5	&6.7	&5.1	&3.4\\
1500               &\phantom{0}25.6\phantom{0}	&\phantom{0}14.1\phantom{0}&	\phantom{0}7.8\phantom{0}	 &\phantom{0}6.8\phantom{0}&	\phantom{0}5.1\phantom{0}\\
\hline \hline
\end{tabular}
\end{minipage}
\end{center}
\end{table}
 We display in Figure~\ref{fig:LGMvar} the empirical variance for different values of $N$ as a function of $T$ for both estimators. These estimates are represented by dots and a linear regression (resp. quadratic regression) is also provided for the FFBSi algorithm (resp. for the path-space method).
 \begin{figure}[h!]
\centering
 \begin{minipage}[b]{1\linewidth}
  \centering
  \includegraphics[width=0.85\linewidth]{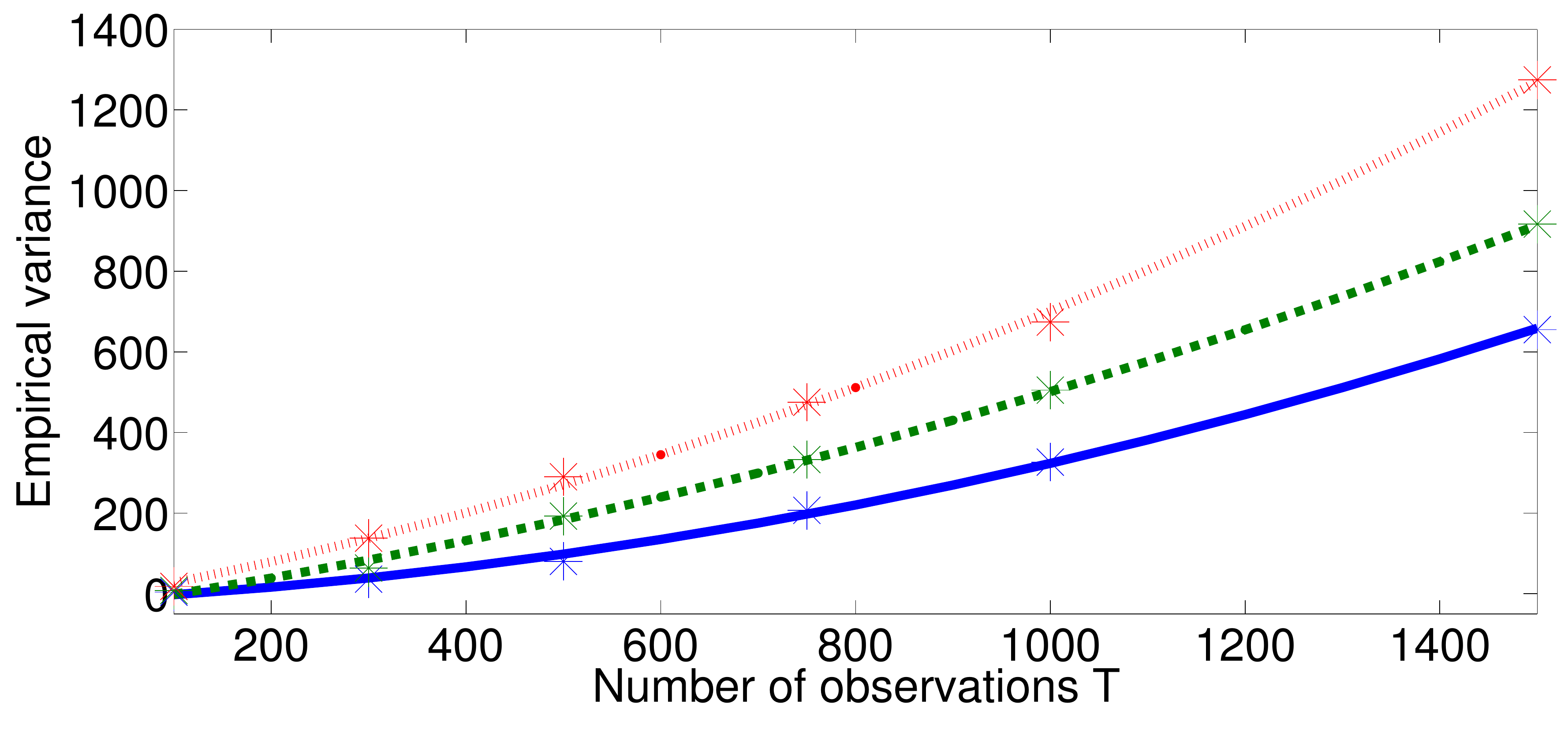}
 \end{minipage}

\vspace{0.1cm}

 \begin{minipage}[b]{1\linewidth}
  \centering
  \includegraphics[width=0.85\linewidth]{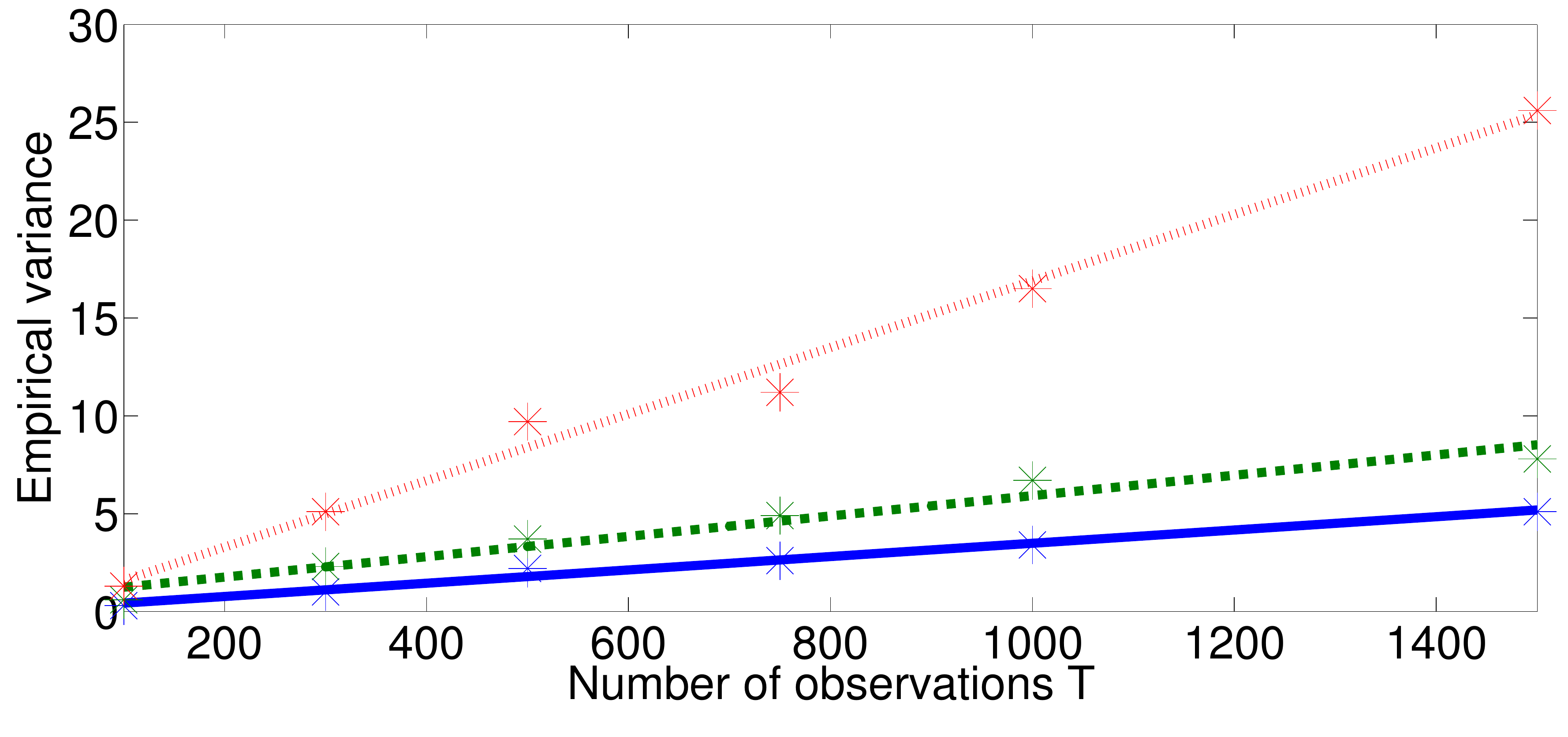}
 \end{minipage}
\caption{Empirical variance of the path-space (top) and FFBSi (bottom) for $N=300$ (dotted line), $N=750$ (dashed line) and $N=1500$ (bold line).}
\label{fig:LGMvar}
\end{figure}

In Figure \ref{fig:boxplotsmoothedLGM} the FFBSi algorithm is compared to the path-space method to compute the smoothed value of the empirical mean $(T+1)^{-1}\mathcal{I}_T$. For the purpose of comparison, this quantity is computed using the Kalman smoother. We display in Figure \ref{fig:boxplotsmoothedLGM} the box and whisker plots of the estimations obtained with $100$ independent Monte Carlo experiments. The FFBSi algorithm clearly outperforms the other method for comparable computational costs. In Table \ref{tab:LGMtime}, the mean CPU times over the $100$ runs of the two methods are given as a function of the number of particles (for $T=500$ and $T=1000$).

\begin{figure}[!h]
  \centering
  \subfloat[Time $T = 500$]{\label{fig:boxplotsmoothed500}\includegraphics[width=0.6\textwidth]{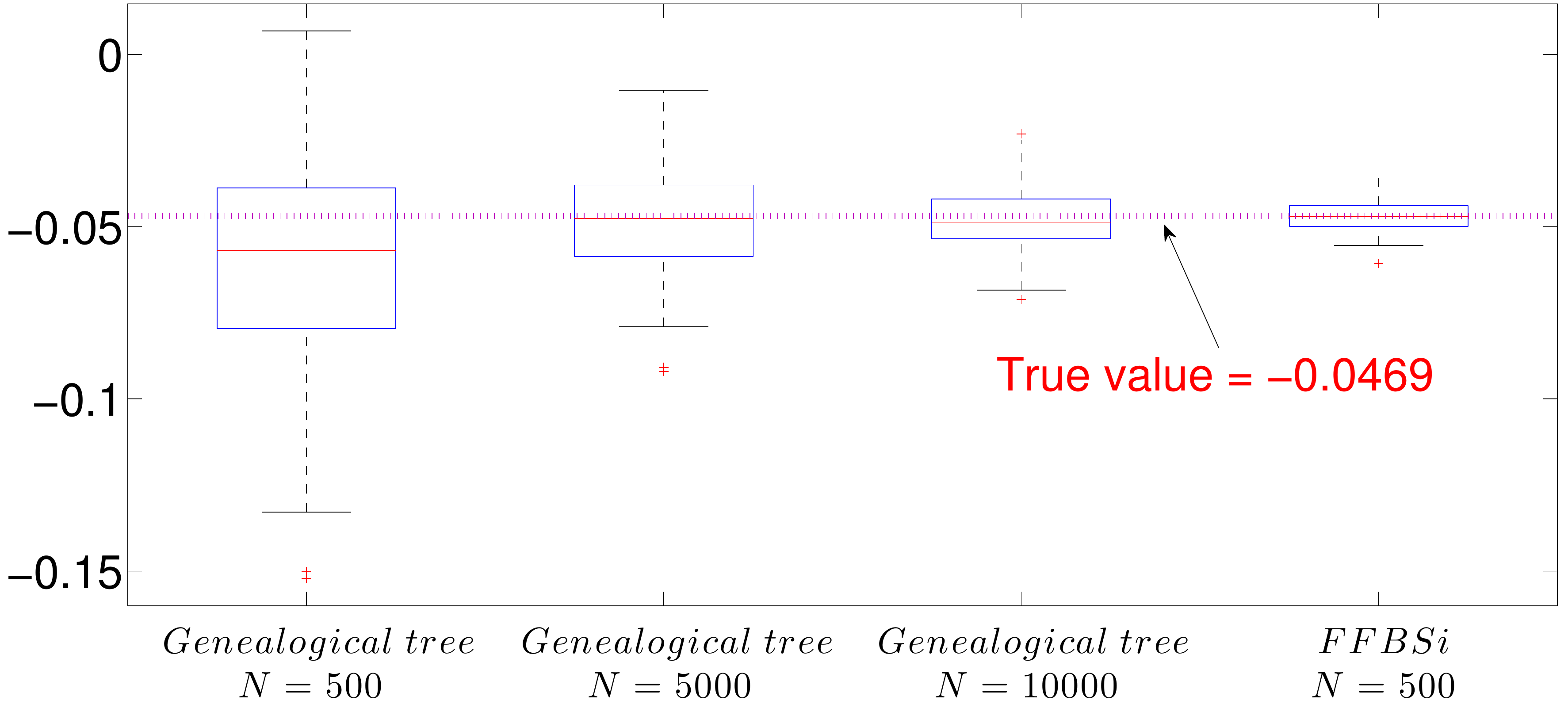}}\\
  \subfloat[Time $T = 1000$]{\label{fig:boxplotsmoothed1000}\includegraphics[width=0.6\textwidth]{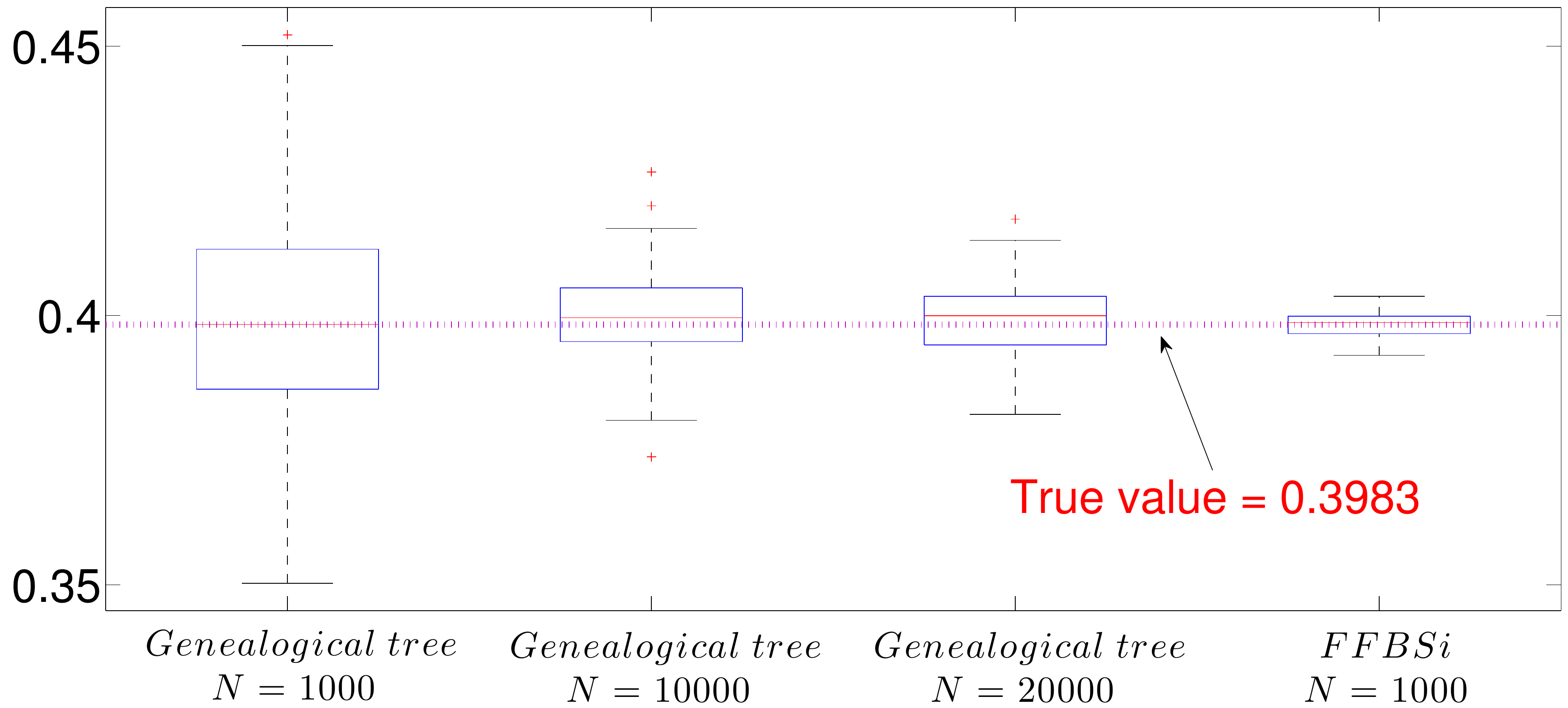}}
  \caption{Computation of smoothed additive functionals in a linear gaussian model. The variance of the estimation given by the FFBSi algorithm is the smallest one in both cases.}
  \label{fig:boxplotsmoothedLGM}
\end{figure}

\begin{table}[h!]
\centering
\caption{Average CPU time to compute the smoothed value of the empirical mean in the LGM}
\label{tab:LGMtime}
\begin{minipage}[t]{.6\linewidth}
    \begin{tabular}{cccccc}
\hline\hline
 $T=500$           &   FFBSi   &   &   \multicolumn{3}{c}{Path-space method}     \\
\\[-7.5pt] \cline{2-2} \cline{4-6}\\[-5pt]
$N$         &   500    &   &   500    &   5000  & 10000\\
CPU time (s)&   4.87      &   &   0.24     &   2.47      & 4.65\\
\hline \hline
\end{tabular}
\end{minipage}

\vspace{0.3cm}

\begin{minipage}[t]{.6\linewidth}
    \begin{tabular}{cccccc}
\hline\hline
$T=1000$            &   FFBSi   &   &   \multicolumn{3}{c}{Path-space method}     \\
\\[-7.5pt] \cline{2-2} \cline{4-6}\\[-5pt]
$N$         &   1000    &   &   1000    &   10000  & 20000\\
CPU time (s)&   16.5      &   &   0.9     &   8.5      & 17.2\\
\hline \hline
\end{tabular}
\end{minipage}
\end{table}

\subsection{Stochastic Volatility Model}
Stochastic volatility models (SVM) have been introduced to provide
better ways of modeling financial time series data than ARCH/GARCH
models (\cite{hull:white:1987}). We consider the elementary SVM model
introduced by \cite{hull:white:1987}:
\begin{equation*}
\begin{cases}
X_{t+1} = \phi X_t + \sigma U_{t+1}\eqsp, \\
Y_t = \beta \rme^{\frac{X_t}{2}} V_t\eqsp,
\end{cases}
\end{equation*}
where $X_0$ is a zero-mean random variable with variance $\frac{\sigma_u^2}{1-\phi^2}$, $\left\{U_t\right\}_{t\geq 0}$ and $\left\{V_t\right\}_{t\geq 0}$ are two sequences of independent and identically distributed standard gaussian random variables (independent from $X_0$).
This model was used to generate simulated data with parameters
$(\phi=0.3,\sigma=0.5,\beta=1)$ assumed to be known in the following
experiments. The empirical variance of the estimation of  $\mathcal{I}_T$ given by the path-space and the FFBSi methods over $250$ independent Monte Carlo experiments is
displayed in Table \ref{tab:SVMvar}.
\begin{table}[h!]
\centering
\caption{Empirical variance for different values of $T$ and $N$ in the SVM.}
\label{tab:SVMvar}
\begin{minipage}[t]{\linewidth}
\begin{tabular}{|c|ccccccccc|}
\multicolumn{3}{l}{\textbf{Path-space method}}\\
\hline\hline
 \backslashbox{$T$}{$N$}           &   300 & 500 & 750 & 1000 & 1500  & 5000    &   10000   &   15000   &   20000  \\
\hline
 300               &   52.7&   33.7    &22.0   &17.8&  12.3 &   3.8	&2.0	&1.4	&1.2\\
500                 &116.3&     84.8&   64.8&   53.5&   30.7    &   11.4	&6.8	&4.1	&2.8\\
750                 &184.7&     187.6   &134.2& 120.0&  65.8    &   29.1	&12.8	&7.3	&7.7\\
1000               &    307.7&  240.4&  244.7   &182.8& 133.2   &   43.6	&24.5	&15.6	&11.6\\
1500               &    512.1&  487.5&  445.5&  359.9&  249.5   &   90.9	&52.0	&32.6	&29.3\\
\hline\hline
\end{tabular}
\begin{tabular}{|c|ccccc|}
\multicolumn{3}{l}{\textbf{FFBSi}}\\
\hline\hline
 \backslashbox{$T$}{$N$}           &   300 & 500 & 750 & 1000 & 1500    \\
 \hline
 300              &    1.2&    0.6&    0.5     &0.4&   0.2\\
500               &     2.1     &1.2    &0.8    &0.6&   0.4\\
750                     &3.7    &1.8&   1.4     &0.9    &0.6\\
1000               &    4.0     &2.7    &1.8    &1.3    &0.9\\
1500               &\phantom{0}7.3\phantom{0} &\phantom{0}3.8\phantom{0}&   \phantom{0}3.1\phantom{0}     &\phantom{0}1.6\phantom{0}&   \phantom{0}1.4\phantom{0}\\
\hline \hline
\end{tabular}
\end{minipage}
\end{table}
 We display in Figure \ref{fig:SVMvar} the empirical variance for
different values of $N$ as a function of $T$ for both estimators.
 \begin{figure}[h!]
\centering
 \begin{minipage}[b]{1\linewidth}
 \centering
 \includegraphics[width=0.85\linewidth]{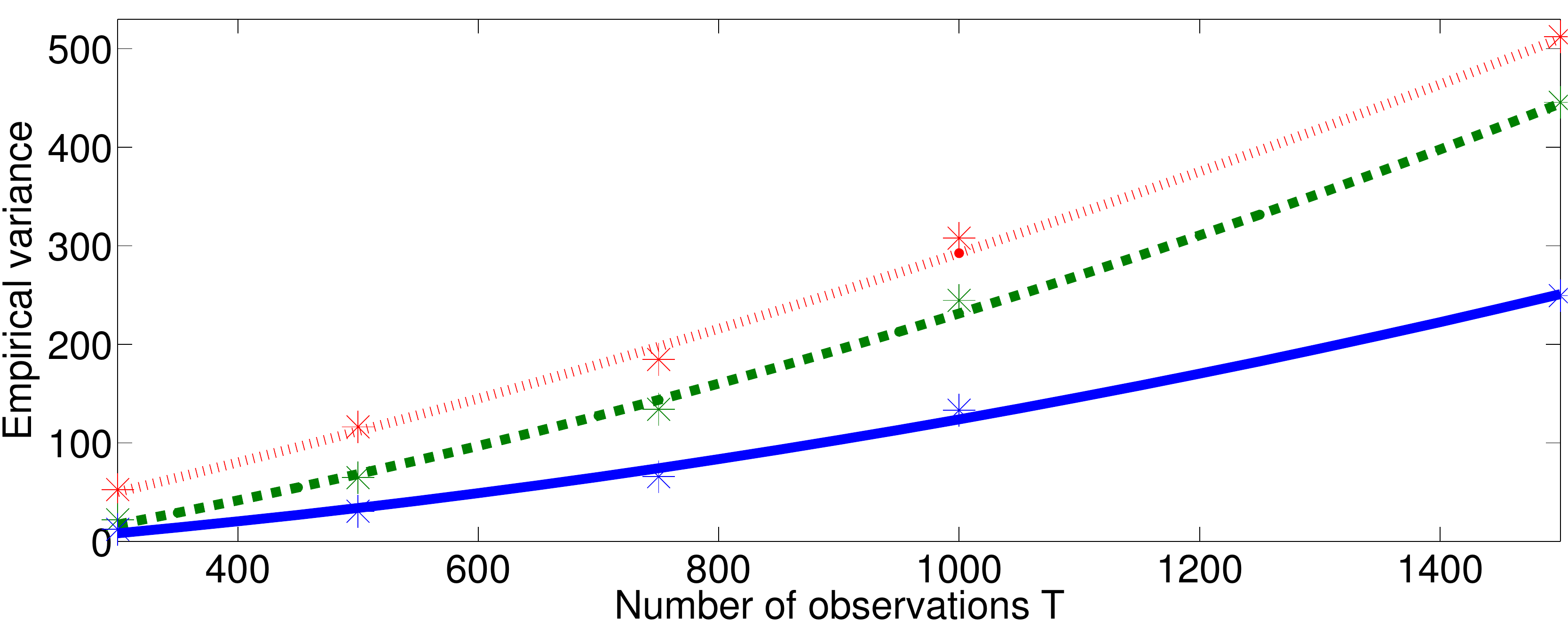}
 \end{minipage}

\vspace{0.1cm}

 \begin{minipage}[b]{1\linewidth}
 \centering
 \includegraphics[width=0.85\linewidth]{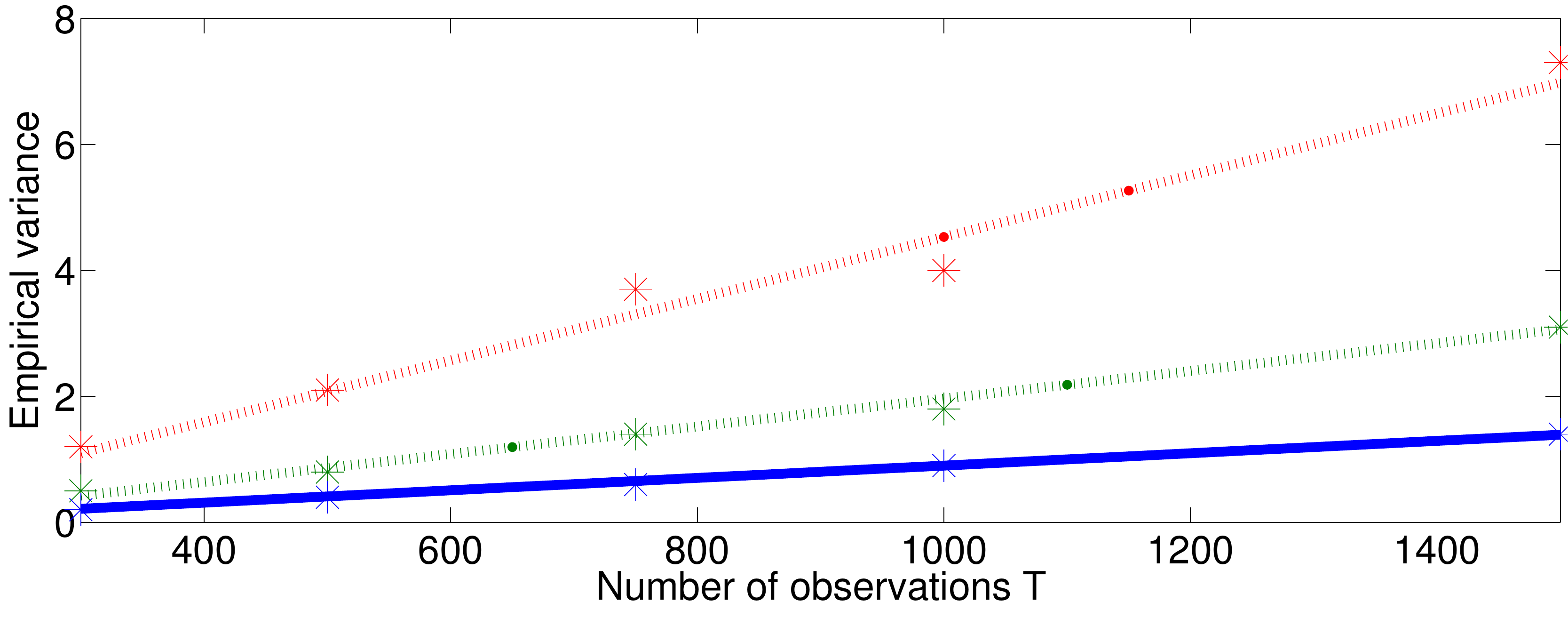}
 \end{minipage}
\caption{Empirical variance of the path-space (top) and FFBSi
(bottom) for $N=300$ (dotted line), $N=750$ (dashed line) and $N=1500$
(bold line) in the SVM.}
\label{fig:SVMvar}
\end{figure}

\section{Proof of Theorem~\ref{Th:MainResult}}
\label{sec:proof:theorem:mainresult}
We preface the proof of Proposition \ref{Prop:NormD} by the following Lemma:
\begin{lemma}
\label{Lem:Upperbounds}
Under assumptions A\ref{assum:mixing}--\ref{assum:boundalgo}, we have, for any $t\in \{0,\dots,T\}$ and any measurable function $h$ on $\Xset^{T+1}$:
\begin{enumerate}[(i)]
\item \label{Lem:Upperbounds:ind}The random variables $\displaystyle \left\{ \ewght{t}{N,\ell}\dfrac{G^N_{t,T}h(\epart{t}{N,\ell})}{|\mathcal{L}_{t,T}\1|_\infty}\right\}_{\ell=1}^{N}$ are, for all $N\in\mathbb{N}$:
    \begin{enumerate}
    \item \label{Lem:Upperbounds:ind:claim1} conditionally independent and identically distributed given $\mathcal{F}_{t-1}^N$\eqsp,
    \item \label{Lem:Upperbounds:ind:claim2}centered conditionally to $\mathcal{F}_{t-1}^N$\eqsp.
    \end{enumerate}
    where $G^N_{t,T}h$ is defined in \eqref{eq:defG} and $\mathcal{L}_{t,T}^{N}$ is defined in \eqref{eq:defLN}.
\item \label{Lem:Upperbounds:boundup} For any integers $r$, $t$ and $N$:
\begin{equation}\label{eq:bound-G}
\left| \dfrac{G^N_{t,T}\addfunc{T,\size}(\epart{t}{N,\ell})}{|\mathcal{L}_{t,T}\1|_\infty}\right|\leq \sum_{s=r}^T \rho^{\max(t-s,s-r-t,0)}\osc(h_s)\eqsp,
\end{equation}
where $\addfunc{T,\size}$ and $\rho$ are respectively defined in \eqref{eq:addfuncbis} and in A\ref{assum:mixing}\eqref{assum:mixing:m}.
\item \label{Lem:Upperbounds:boundlow}
For all $x\in\Xset$, $\displaystyle \dfrac{\mathcal{L}_{t,T}\1(x)}{|\mathcal{L}_{t,T}\1|_\infty} \geq \dfrac{\sigma_-}{\sigma_+}$ and $\displaystyle \dfrac{\mathcal{L}_{t-1,T}\1(x)}{|\mathcal{L}_{t,T}\1|_\infty} \geq c_- \dfrac{\sigma_-}{\sigma_+}$\eqsp.
\end{enumerate}
\end{lemma}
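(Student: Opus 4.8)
The plan is to treat the three items more or less independently, exploiting the structure of the forward pass. For \eqref{Lem:Upperbounds:ind}, I would first recall that given $\mathcal{F}_{t-1}^N$ the pairs $\{(I_t^{N,\ell},\epart{t}{N,\ell})\}_{\ell=1}^N$ are drawn i.i.d.\ from the instrumental distribution $\instrpostaux{t}{t}$, so each summand $\ewght{t}{N,\ell}G^N_{t,T}h(\epart{t}{N,\ell})/|\mathcal{L}_{t,T}\1|_\infty$ is a measurable function of $(I_t^{N,\ell},\epart{t}{N,\ell})$ alone, with the kernel $G^N_{t,T}$ being $\mathcal{F}_{t-1}^N$-measurable (it only depends on $\phi_{t-1}^N$, $\mathcal{L}_{t-1,T}^N$ and $\mathcal{L}_{t,T}^N$, all of which are built from particles up to time $t-1$ and the fixed functions). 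This gives claim \eqref{Lem:Upperbounds:ind:claim1} immediately. For claim \eqref{Lem:Upperbounds:ind:claim2}, I would apply the conditional-expectation identity \eqref{eq:whExpect} with $h$ replaced by $G^N_{t,T}h$, obtaining
\[
\CExp{\ewght{t}{N,1}G^N_{t,T}h(\epart{t}{N,1})}{\mathcal{F}_{t-1}^N}=\frac{\phi_{t-1}^N[Mg_t\,G^N_{t,T}h]}{\phi_{t-1}^N[\vartheta_t]}\eqsp.
\]
Then I would use that $Mg_t\,\mathcal{L}_{t,T}^N\1=\mathcal{L}_{t-1,T}^N\1$ and, more generally, that $\phi_{t-1}^N[Mg_t\,\mathcal{L}_{t,T}^Nh]=\phi_{t-1}^N[\mathcal{L}_{t-1,T}^Nh]$ (this is exactly how the backward kernels $\rmB_{\phi^N_u}$ telescope into $\mathcal{L}^N$); plugging the definition of $G^N_{t,T}h$ shows the numerator equals $\phi_{t-1}^N[\mathcal{L}_{t-1,T}^Nh]-\frac{\phi_{t-1}^N[\mathcal{L}_{t-1,T}^Nh]}{\phi_{t-1}^N[\mathcal{L}_{t-1,T}^N\1]}\phi_{t-1}^N[\mathcal{L}_{t-1,T}^N\1]=0$, which is the desired centering. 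The case $t=0$ is handled separately using the stated convention for $G^N_{0,T}$.

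For item \eqref{Lem:Upperbounds:boundup}, the strategy is to bound $\ewght{t}{N,\ell}|G^N_{t,T}\addfunc{T,\size}(\epart{t}{N,\ell})|/|\mathcal{L}_{t,T}\1|_\infty$ term by term in the additive decomposition $\addfunc{T,\size}=\sum_{s=r}^T h_s(x_{s-r:s})$. Since $G^N_{t,T}$ is linear in $h$, it suffices to bound the contribution of each $h_s$. Here one distinguishes the regimes $s\le t$ (the function $h_s$ already lies in the "past" of time $t$, reached through the backward kernels), $s>t$ (reached through forward transitions $M$), and the intermediate window $s-r\le t\le s$. In each regime one uses the Dobrushin-type contraction coming from A\ref{assum:mixing}\eqref{assum:mixing:m}: the backward kernels $\rmB_{\phi^N_u}$ are $\rho$-mixing in total variation, as are the normalized forward transitions, so a function $h_s$ evaluated $k$ steps away contributes at most $\rho^k\osc(h_s)$; the exponent $\max(t-s,\,s-r-t,\,0)$ records exactly the number of contracting steps between time $t$ and the "block" $\{s-r,\dots,s\}$. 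The subtraction of the ratio term in $G^N_{t,T}$ is what lets us replace $|h_s|_\infty$ by $\osc(h_s)$. I expect this step to be the main obstacle, because it requires carefully tracking which kernel applies at which time index and verifying that the mixing contraction genuinely yields the stated exponent in all three regimes uniformly in $N$; this is the combinatorial heart of the whole argument.

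Finally, item \eqref{Lem:Upperbounds:boundlow} is a direct computation from the definitions. For the first bound, $\mathcal{L}_{t,T}\1(x)=\int\prod_{u>t}M(x_{u-1},\rmd x_u)g_u(x_u)$ integrated against backward kernels; using $\sigma_-\le m\le\sigma_+$ one sees that $\mathcal{L}_{t,T}\1(x)/\mathcal{L}_{t,T}\1(x')\ge\sigma_-/\sigma_+$ for any $x,x'$, hence $\mathcal{L}_{t,T}\1(x)/|\mathcal{L}_{t,T}\1|_\infty\ge\sigma_-/\sigma_+$. For the second bound, one writes $\mathcal{L}_{t-1,T}\1=Mg_t\,\mathcal{L}_{t,T}\1$ composed with the backward kernels down to time $t-1$, and the extra factor $\int M(x,\rmd x')g_t(x')\ge c_-$ from A\ref{assum:mixing}\eqref{assum:mixing:int} produces the additional $c_-$; combining with the first estimate gives $c_-\sigma_-/\sigma_+$. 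These are the kind of routine bounds that I would state with a one-line justification rather than expand in full.
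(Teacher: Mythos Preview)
Your plan is correct and mirrors the paper's proof: item \eqref{Lem:Upperbounds:ind} is handled exactly as you describe (the paper simply cites \GisCentered\ for this), item \eqref{Lem:Upperbounds:boundup} is proved by the same term-by-term decomposition $\addfunc{T,\size}=\sum_s\Pi_{s-\size:s,T}h_s$ with the three regimes $t\le s-\size$, $s-\size<t\le s$, $t>s$ and the forgetting bounds from \GNorm, and item \eqref{Lem:Upperbounds:boundlow} is the same two-line computation from $\sigma_-\le m\le\sigma_+$ and $\mathcal{L}_{t-1,T}\1=Mg_t\mathcal{L}_{t,T}\1$. One small slip: in \eqref{Lem:Upperbounds:boundup} the quantity to bound is $|G^N_{t,T}\addfunc{T,\size}(\epart{t}{N,\ell})|/|\mathcal{L}_{t,T}\1|_\infty$ without the weight $\ewght{t}{N,\ell}$, so drop that factor from your sketch.
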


\begin{proof}
The proof of \eqref{Lem:Upperbounds:ind} is given by \GisCentered.

{\em Proof of \eqref{Lem:Upperbounds:boundup}}. Let $\Pi_{s-\size:s,T}$ be the operator which associates to any bounded and measurable function $h$ on $\Xset^{\size+1}$ the function $\Pi_{s-\size:s,T}h$ given, for any $(x_0,\dots,x_T) \in \Xset^{T+1}$, by
\begin{equation*}
\Pi_{s-\size:s,T}h(x_{0:T}) \eqdef h(x_{s-\size:s})\eqsp.
\end{equation*}
Then, we may write $\addfunc{T,\size} = \sum_{s=\size}^T \Pi_{s-\size:s,T}h_s$ and $G^N_{t,T}\addfunc{T,\size} = \sum_{s=\size}^TG^{N}_{t,T}\Pi_{s-\size:s,T}h_s$.
By \eqref{eq:defG}, we have
\begin{equation*}
\frac{G_{t,T}^N \Pi_{s-\size:s,T}h_s(x_t)}{\mathcal{L}_{t,T}^N\1(x_t)} =  \frac{\mathcal{L}_{t,T}^N \Pi_{s-\size:s,T}h_s(x_t)}{\mathcal{L}_{t,T}^N\1(x_t)} - \frac{\phi_{t-1}^N[\mathcal{L}_{t-1,T}^N \Pi_{s-\size:s,T}h_s]}{\phi_{t-1}^N[\mathcal{L}_{t-1,T}^N\1]}\eqsp,
\end{equation*}
and, following the same lines as in \GNorm,
\begin{equation*}
|G^N_{t,T}\Pi_{s-\size:s,T}h_s|_\infty \leq \rho^{s-r-t} \osc(h_s) |\mathcal{L}_{t,T}\1|_\infty \quad \mbox{if}\quad t\leq s-r\eqsp,
\end{equation*}

\begin{equation*}
|G^N_{t,T}\Pi_{s-\size:s,T}h_s|_\infty \leq \rho^{t-s} \osc(h_s) |\mathcal{L}_{t,T}\1|_\infty \quad \mbox{if}\quad t> s\eqsp,
\end{equation*}
where $\rho$ is defined in A\ref{assum:mixing}\eqref{assum:mixing:m}. Furthermore, for any $s-r<t\leq s$,
\[
|G^N_{t,T}\Pi_{s-\size:s,T}h_s|_\infty \leq  \osc(h_s) |\mathcal{L}_{t,T}\1|_\infty\eqsp,
\]
which shows \eqref{Lem:Upperbounds:boundup}.\\
{\em Proof of \eqref{Lem:Upperbounds:boundlow}}.
From the definition \eqref{eq:defL}, for all $x\in\Xset$ and all $t\in\{1,\dots,T\}$,
\begin{equation*}
\mathcal{L}_{t,T}\1(x) = \int m(x,x_{t+1})g_{t+1}(x_{t+1})\prod_{u=t+2}^{T}M(x_{u-1},\rmd x_u)g_u(x_u)\lambda(\rmd x_{t+1})\eqsp,
\end{equation*}
hence, by assumption A\ref{assum:mixing},
\begin{align*}
\left|\mathcal{L}_{t,T}\1\right|_{\infty}&\leq \sigma_+\int g_{t+1}(x_{t+1})\mathcal{L}_{t+1,T}\1(x_{t+1})\lambda(\rmd x_{t+1})\\
\mathcal{L}_{t,T}\1(x)&\geq \sigma_- \int g_{t+1}(x_{t+1})\mathcal{L}_{t+1,T}\1(x_{t+1})\lambda(\rmd x_{t+1})\eqsp,
\end{align*}
which concludes the proof of the first statement. By construction, for any $x\in\Xset$ and any $t\in\{1,\dots,T\}$,
\begin{equation*}
\label{eq:Lrec}
\mathcal{L}_{t-1,T}\1(x) = \int M(x,\rmd x^{\prime})g_t(x^{\prime})\mathcal{L}_{t,T}\1(x^{\prime})\eqsp,
\end{equation*}
and then, by assumption A\ref{assum:mixing},
\begin{equation*}\label{Eq:InfLdiff}
\dfrac{\mathcal{L}_{t-1,T}\1(x)}{|\mathcal{L}_{t,T}\1|_\infty} =\int M(x,\rmd x^{\prime})g_t(x^{\prime}) \dfrac{\mathcal{L}_{t,T}\1(x^{\prime})}{|\mathcal{L}_{t,T}\1|_\infty} \geq c_- \dfrac{\sigma_-}{\sigma_+}\eqsp.
\end{equation*}
\end{proof}

\begin{proposition}\label{Prop:NormD}
Assume A\ref{assum:mixing}--\ref{assum:boundalgo}. For all $q \geq2$, there exists a constant $C$ (depending only on $q$, $\sigma_-$, $\sigma_+$, $c_-$, $\underset{t\geq 1}{\sup}|\vartheta_t|_{\infty}$ and $\underset{t\geq 0}{\sup}|\ewght{t}{}|_{\infty} $) such that for any $T<\infty$, any integer $r$ and any bounded and measurable functions $\{h_{s}\}_{s=r}^{T}$ on $\Xset^{\size+1}$,
\begin{equation}\label{Eq:NormD}
\normL{q}{\sum_{t=0}^{T}D_{t,T}^N(\addfunc{T,\size})} \leq \frac{C}{\sqrt{N}}\sqrt{1+\size}\left(\sqrt{1+\size}\wedge\sqrt{T-\size+1}\right)\left(\sum_{s=\size}^{T}\osc(h_{s})^{2}\right)^{1/2}\eqsp,
\end{equation}
where $D_{t,T}^N$ is defined in \eqref{eq:defD}.
\end{proposition}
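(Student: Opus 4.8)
\medskip
\noindent\textbf{Proof strategy.}\quad
The plan is to exploit the martingale structure of $\sum_{t=0}^{T}D_{t,T}^N(\addfunc{T,\size})$. Using \eqref{eq:whExpect}, rewrite $D_{t,T}^N(\addfunc{T,\size}) = A_t^N\,N^{-1}\sum_{\ell=1}^{N}U_t^{N,\ell}$, where $A_t^N\eqdef\phi_{t-1}^N[\vartheta_t]\big/\phi_{t-1}^N\big[\mathcal{L}_{t-1,T}\1/|\mathcal{L}_{t,T}\1|_\infty\big]$ and $U_t^{N,\ell}\eqdef\ewght{t}{N,\ell}\,G_{t,T}^N\addfunc{T,\size}(\epart{t}{N,\ell})\big/|\mathcal{L}_{t,T}\1|_\infty$. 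The factor $A_t^N$ is $\mathcal{F}_{t-1}^N$-measurable, and by Lemma~\ref{Lem:Upperbounds}\eqref{Lem:Upperbounds:ind} the variables $\{U_t^{N,\ell}\}_{\ell=1}^{N}$ are conditionally i.i.d.\ and centered given $\mathcal{F}_{t-1}^N$; hence $\CExp{D_{t,T}^N(\addfunc{T,\size})}{\mathcal{F}_{t-1}^N}=0$, and since $D_{t,T}^N(\addfunc{T,\size})$ is $\mathcal{F}_t^N$-measurable, $\{D_{t,T}^N(\addfunc{T,\size})\}_{t=0}^{T}$ is a martingale-difference sequence for the filtration $\{\mathcal{F}_t^N\}_{t=0}^{T}$ (with $\mathcal{F}_{-1}^N$ trivial, which covers the term $t=0$).

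Second, I would bound each increment in $\rmL_q$. Since $\phi_{t-1}^N$ is a probability measure, Lemma~\ref{Lem:Upperbounds}\eqref{Lem:Upperbounds:boundlow} gives $\phi_{t-1}^N[\mathcal{L}_{t-1,T}\1/|\mathcal{L}_{t,T}\1|_\infty]\geq c_-\sigma_-/\sigma_+$, while $\phi_{t-1}^N[\vartheta_t]\leq\sup_{t\geq1}|\vartheta_t|_\infty$; thus $|A_t^N|$ is bounded by a constant depending only on $\sigma_-$, $\sigma_+$, $c_-$ and $\sup_{t\geq1}|\vartheta_t|_\infty$. Moreover, Lemma~\ref{Lem:Upperbounds}\eqref{Lem:Upperbounds:boundup} together with A\ref{assum:boundalgo} gives the deterministic bound $|U_t^{N,\ell}|\leq\sup_{t\geq0}|\ewght{t}{}|_\infty\, b_t$ with $b_t\eqdef\sum_{s=\size}^{T}\rho^{\max(t-s,\,s-\size-t,\,0)}\osc(h_s)$. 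Applying a Marcinkiewicz--Zygmund-type inequality for sums of conditionally i.i.d.\ centered bounded variables (cf.\ \TechLem) yields $\CExp{\big|N^{-1}\sum_{\ell}U_t^{N,\ell}\big|^q}{\mathcal{F}_{t-1}^N}^{1/q}\leq C_q N^{-1/2}\sup_{t\geq0}|\ewght{t}{}|_\infty\, b_t$, and multiplying by the bounded $\mathcal{F}_{t-1}^N$-measurable factor $A_t^N$ and taking expectations gives $\normL{q}{D_{t,T}^N(\addfunc{T,\size})}\leq C N^{-1/2} b_t$.

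Third, the martingale-difference property yields, for $q\geq2$, the classical moment inequality $\normL{q}{\sum_{t=0}^{T}D_{t,T}^N(\addfunc{T,\size})}\leq C_q\big(\sum_{t=0}^{T}\normL{q}{D_{t,T}^N(\addfunc{T,\size})}^2\big)^{1/2}$ (Burkholder's inequality followed by Minkowski's inequality in $\rmL_{q/2}$), so that it remains to bound the deterministic quantity $\sum_{t=0}^{T}b_t^2$. Writing $a_{t,s}\eqdef\rho^{\max(t-s,\,s-\size-t,\,0)}$, one has $a_{t,s}=1$ exactly on the block $t\leq s\leq t+\size$ and geometric decay off it, so that $\sum_{s=\size}^{T}a_{t,s}\leq(1+\size)+2\rho/(1-\rho)$ uniformly in $t$ and, symmetrically, $\sum_{t=0}^{T}a_{t,s}\leq(1+\size)+2\rho/(1-\rho)$ uniformly in $s$. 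Cauchy--Schwarz on the inner sum with the weights $a_{t,s}$, followed by Fubini, gives $\sum_{t=0}^{T}b_t^2\leq C(1+\size)^2\sum_{s=\size}^{T}\osc(h_s)^2$, while Cauchy--Schwarz on the inner sum without weights gives $b_t^2\leq(T-\size+1)\sum_{s=\size}^{T}a_{t,s}^2\osc(h_s)^2$, whence, using $a_{t,s}^2\leq a_{t,s}$ and $\sum_{t=0}^{T}a_{t,s}\leq C(1+\size)$, $\sum_{t=0}^{T}b_t^2\leq C(1+\size)(T-\size+1)\sum_{s=\size}^{T}\osc(h_s)^2$. Taking the minimum of the two bounds gives $\sum_{t=0}^{T}b_t^2\leq C(1+\size)\{(1+\size)\wedge(T-\size+1)\}\sum_{s=\size}^{T}\osc(h_s)^2$, and collecting constants yields \eqref{Eq:NormD}.

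The martingale identification and the per-increment $\rmL_q$ bound are routine once Lemma~\ref{Lem:Upperbounds} is available; I expect the main obstacle to be the final deterministic estimate, where obtaining the sharp prefactor $(1+\size)\{(1+\size)\wedge(T-\size+1)\}$, rather than a wasteful $(1+\size)(T+1)$, requires applying Cauchy--Schwarz to the inner sum in the two complementary ways above and carefully tracking which summation index remains free.
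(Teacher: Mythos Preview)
Your proposal is correct and follows essentially the same route as the paper: Burkholder on the $\{\mathcal{F}_t^N\}$-martingale differences, Minkowski in $\rmL_{q/2}$, a second Burkholder/MZ step on the conditionally i.i.d.\ inner sum using Lemma~\ref{Lem:Upperbounds}, and then the deterministic estimate on $\sum_t b_t^2$ via Cauchy--Schwarz in the two complementary ways. The only cosmetic difference is in the second deterministic bound: the paper obtains $(\sum_t b_t^2)^{1/2}\leq C\sqrt{1+\size}\sum_{s}\osc(h_s)$ by Minkowski in $\ell^2_t$ and then applies Cauchy--Schwarz to $\sum_s\osc(h_s)$, whereas you reach the same $(1+\size)(T-\size+1)$ bound by Cauchy--Schwarz with uniform weights in $s$ followed by Fubini---both are equivalent.
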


\begin{proof}
Since $\left\{D_{t,T}^N(\addfunc{T,\size})\right\}_{0\leq t\leq T}$ is a  is a forward martingale difference and $q\geq 2$, Burkholder's inequality (see \cite[Theorem 2.10, page 23]{hall:heyde:1980}) states the existence of a constant $C$ depending only on $q$ such that:
\begin{equation*}
\normLpw{q}{\sum_{t=0}^{T}D_{t,T}^N(\addfunc{T,\size})} \leq C\normLpw{\frac{q}{2}}{\sum_{t=0}^{T}D_{t,T}^N(\addfunc{T,\size})^2}\eqsp.
\end{equation*}
Moreover, by application of the last statement of Lemma \ref{Lem:Upperbounds}\eqref{Lem:Upperbounds:boundlow},
\begin{equation*}
\frac{\phi_{t-1}^N[\vartheta_t]}{\phi_{t-1}^N\left[\frac{\mathcal{L}_{t-1,T}\1}{|\mathcal{L}_{t,T}\1|_{\infty}}\right]} \leq  \frac{\sigma_{+} \sup_{t\geq 0}|\vartheta_t|_{\infty}}{\sigma_{-}c_{-}}\eqsp,
\end{equation*} and thus,
\begin{equation*}
\normLpw{\frac{q}{2}}{\sum_{t=0}^{T}D_{t,T}^N(\addfunc{T,\size})^2} \leq   \left(\frac{\sigma_{+} \sup_{t\geq 0}|\vartheta_t|_{\infty}}{\sigma_{-}c_{-}}\right)^q\normLpw{\frac{q}{2}}{\sum_{t=0}^T\left(N^{-1}\sum_{\ell=1}^{N}a_{t,T}^{N,\ell}\right)^2}\eqsp,
\end{equation*}
where $a_{t,T}^{N,\ell}\eqdef \ewght{t}{N,\ell} \frac{G_{t,T}^N\addfunc{T,\size}(\epart{t}{N,\ell})}{|\mathcal{L}_{t,T}\1|_{\infty}}$. By the Minkowski inequality,
\begin{equation}
\label{eq:jensenD}
\normL{q}{\sum_{t=0}^{T}D_{t,T}^N(\addfunc{T,\size})} \leq C \left\{\sum_{t=0}^T\left(\mathbb{E}\left[\left|N^{-1}\sum_{\ell=1}^{N}a_{t,T}^{N,\ell}\right|^q\right]\right)^{2/q}\right\}^{1/2}\eqsp.
\end{equation}
Since for any $t\geq 0$ the random variables $\displaystyle \left\{ a_{t,T}^{N,\ell}\right\}_{\ell=1}^{N}$ are conditionally independent and centered conditionally to $\mathcal{F}_{t-1}^N$, using again the Burkholder and the Jensen inequalities we obtain
\begin{align}
\label{eq:boundA}
\CExp{\left|\sum_{\ell=1}^{N}a_{t,T}^{N,\ell}\right|^q}{\mathcal{F}_{t-1}^N}&\leq CN^{q/2-1}\sum_{\ell=1}^N \CExp{\left|a_{t,T}^{N,\ell}\right|^q}{\mathcal{F}_{t-1}^N} \nonumber \\
&\leq C\left[\sum_{s=r}^T \rho^{\max(t-s,s-r-t,0)}\osc(h_s)\right]^qN^{q/2}\eqsp,
\end{align}
where the last inequality comes from \eqref{eq:bound-G}. Finally, by \eqref{eq:jensenD} and \eqref{eq:boundA} we get
\begin{align*}
\normL{q}{\sum_{t=0}^{T}D_{t,T}^N(\addfunc{T,\size})} &\leq C N^{-1/2} \left\{\sum_{t=0}^{T}\left(\sum_{s=r}^T \rho^{\max(t-s,s-r-t,0)}\osc(h_s)\right)^{2}\right\}^{1/2}\eqsp.
\end{align*}
By the Holder inequality, we have
\begin{align*}
\sum_{s=r}^T \rho^{\max(t-s,s-r-t,0)}\osc(h_s)&\\
&\hspace{-2.7cm}\le \left(\sum_{s=r}^T \rho^{\max(t-s,s-r-t,0)}\right)^{1/2}\times\left(\sum_{s=r}^T \rho^{\max(t-s,s-r-t,0)}\osc(h_s)^{2}\right)^{1/2}\\
&\hspace{-2.7cm}\le C\sqrt{1+\size} \left(\sum_{s=r}^T \rho^{\max(t-s,s-r-t,0)}\osc(h_s)^{2}\right)^{1/2}\eqsp,
\end{align*}
which yields
\[
\normL{q}{\sum_{t=0}^{T}D_{t,T}^N(\addfunc{T,\size})} \leq C N^{-1/2}(1+\size) \left(\sum_{s=r}^T\osc(h_s)^{2}\right)^{1/2}\eqsp.
\]
We obtain similarly
\[
\normL{q}{\sum_{t=0}^{T}D_{t,T}^N(\addfunc{T,\size})} \leq C N^{-1/2}(1+\size)^{1/2} \sum_{s=r}^T\osc(h_s)\eqsp,
\]
which concludes the proof.
\end{proof}

\begin{proposition}
\label{Prop:NormC}
Assume A\ref{assum:mixing}--\ref{assum:boundalgo}. For all $q \geq 2$, there exists a constant $C$ (depending only on $q$, $\sigma_-$, $\sigma_+$, $c_-$, $\underset{t\geq 1}{\sup}|\vartheta_t|_{\infty}$ and $\underset{t\geq 0}{\sup}|\ewght{t}{}|_{\infty} $) such that for any $T<+\infty$, any $0 \leq t \leq T$, any integer $r$, and any bounded and measurable functions $\{h_{s}\}_{s=r}^{T}$ on $\Xset^{\size+1}$,
\begin{equation}\label{Eq:NormC}
\normL{q}{C_{t,T}^N(\addfunc{T,\size})} \leq  \dfrac{C}{N}\sum_{s=r}^T \rho^{\max(t-s,s-r-t,0)}\osc(h_s)\eqsp,
\end{equation}
where $C_{t,T}^N$ is defined in \eqref{eq:defC}.
\end{proposition}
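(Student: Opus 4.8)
The plan is to bound the two factors in the product defining $C_{t,T}^N(\addfunc{T,\size})$ separately via a Cauchy--Schwarz (or Hölder) splitting of the $\rmL_q$-norm, then control each factor with the tools already developed. Recall from \eqref{eq:defC} that
\[
C_{t,T}^N(\addfunc{T,\size}) = \left[\frac{1}{U_t^N} - \frac{1}{V_t^N}\right]\, W_t^N\eqsp,
\]
where I write $U_t^N \eqdef N^{-1}\sum_{i=1}^N \ewght{t}{N,i}\mathcal{L}_{t,T}\1(\epart{t}{N,i})/|\mathcal{L}_{t,T}\1|_\infty$, $V_t^N \eqdef \phi_{t-1}^N\!\left[\frac{\mathcal{L}_{t-1,T}\1}{|\mathcal{L}_{t,T}\1|_\infty}\right]\big/\phi_{t-1}^N[\vartheta_t] = \CExp{U_t^N}{\mathcal{F}_{t-1}^N}$ (by \eqref{eq:whExpect} applied to $h=\mathcal{L}_{t,T}\1$), and $W_t^N \eqdef N^{-1}\sum_{\ell=1}^N a_{t,T}^{N,\ell}$ with $a_{t,T}^{N,\ell}$ as in the proof of Proposition~\ref{Prop:NormD}. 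Since $\frac1{U_t^N}-\frac1{V_t^N} = (V_t^N - U_t^N)/(U_t^N V_t^N)$, and by Lemma~\ref{Lem:Upperbounds}\eqref{Lem:Upperbounds:boundlow} both $U_t^N$ and $V_t^N$ are bounded below by a positive constant depending only on $\sigma_-,\sigma_+,c_-$ (for $V_t^N$ one also uses $\sup_t|\vartheta_t|_\infty$), the denominator $U_t^N V_t^N$ is bounded away from zero by a constant, uniformly in $N,t,T$. Hence
\[
\left|C_{t,T}^N(\addfunc{T,\size})\right| \leq C\,\left|V_t^N - U_t^N\right|\,\left|W_t^N\right|\eqsp.
\]

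Next I apply the Cauchy--Schwarz inequality for the $\rmL_q$-norm: $\normL{q}{(V_t^N-U_t^N)W_t^N} \leq \normL{2q}{V_t^N-U_t^N}\,\normL{2q}{W_t^N}$. For the second factor, $W_t^N = N^{-1}\sum_\ell a_{t,T}^{N,\ell}$ where, conditionally on $\mathcal{F}_{t-1}^N$, the $a_{t,T}^{N,\ell}$ are i.i.d.\ and centered (Lemma~\ref{Lem:Upperbounds}\eqref{Lem:Upperbounds:ind}) and bounded by $\sum_{s=r}^T\rho^{\max(t-s,s-r-t,0)}\osc(h_s)$ (Lemma~\ref{Lem:Upperbounds}\eqref{Lem:Upperbounds:boundup}); so exactly as in \eqref{eq:boundA} (Burkholder plus Jensen conditionally, then take expectations) one gets $\normL{2q}{W_t^N} \leq C N^{-1/2}\sum_{s=r}^T\rho^{\max(t-s,s-r-t,0)}\osc(h_s)$. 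For the first factor, $V_t^N - U_t^N = \CExp{U_t^N}{\mathcal{F}_{t-1}^N} - U_t^N = -N^{-1}\sum_{i=1}^N\big(b_{t}^{N,i} - \CExp{b_t^{N,i}}{\mathcal{F}_{t-1}^N}\big)$ with $b_t^{N,i} \eqdef \ewght{t}{N,i}\mathcal{L}_{t,T}\1(\epart{t}{N,i})/|\mathcal{L}_{t,T}\1|_\infty$; conditionally on $\mathcal{F}_{t-1}^N$ these are i.i.d.\ and centered, and they are bounded by $\sup_{t\geq0}|\ewght{t}{}|_\infty$ (since $\mathcal{L}_{t,T}\1/|\mathcal{L}_{t,T}\1|_\infty\le 1$). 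The same Burkholder/Jensen argument then gives $\normL{2q}{V_t^N - U_t^N} \leq C N^{-1/2}$. Multiplying the two bounds yields the factor $N^{-1}$ and the claimed right-hand side.

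The main obstacle, and the point requiring the most care, is justifying that the denominator $U_t^N V_t^N$ is bounded below by a \emph{deterministic} positive constant so that the pathwise bound $|1/U_t^N - 1/V_t^N| \leq C|V_t^N - U_t^N|$ holds surely rather than merely in probability; this is precisely what Lemma~\ref{Lem:Upperbounds}\eqref{Lem:Upperbounds:boundlow} is designed to supply ($U_t^N \geq \sigma_-/\sigma_+$ pathwise since $\ewght{t}{N,i}\geq 0$ and $\mathcal{L}_{t,T}\1(\epart{t}{N,i})/|\mathcal{L}_{t,T}\1|_\infty\geq\sigma_-/\sigma_+$, and similarly $V_t^N$ is a ratio of quantities controlled by $c_-\sigma_-/\sigma_+$ from below and $\sup_t|\vartheta_t|_\infty$ from above). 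A secondary bookkeeping point is making sure the conditional-independence structure is used at the right level: $W_t^N$ and $V_t^N - U_t^N$ are each sums of $\mathcal{F}_{t-1}^N$-conditionally i.i.d.\ centered arrays, so Burkholder applies conditionally in both cases, and the Cauchy--Schwarz split at exponent $2q$ is taken \emph{before} conditioning, after which each factor is handled by its own conditional Burkholder bound followed by an outer expectation.
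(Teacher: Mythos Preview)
Your approach has a genuine gap in the lower bound on $U_t^N$. You claim $U_t^N \geq \sigma_-/\sigma_+$ pathwise, but this does not follow: $U_t^N = N^{-1}\sum_i \ewght{t}{N,i}\,\mathcal{L}_{t,T}\1(\epart{t}{N,i})/|\mathcal{L}_{t,T}\1|_\infty$ is an \emph{unnormalized} weighted sum, and the pointwise bound $\mathcal{L}_{t,T}\1/|\mathcal{L}_{t,T}\1|_\infty \geq \sigma_-/\sigma_+$ from Lemma~\ref{Lem:Upperbounds}\eqref{Lem:Upperbounds:boundlow} only yields $U_t^N \geq (\sigma_-/\sigma_+)\, N^{-1}\sumwght[N]{t}$. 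There is no deterministic lower bound on $N^{-1}\sumwght[N]{t}$, since the assumptions place no lower bound on the importance weights (A\ref{assum:boundalgo} bounds $\ewght{t}{}$ only from above). Your bound on $V_t^N$ is correct---it is a \emph{self-normalized} ratio---but the pathwise inequality $|C_{t,T}^N|\leq C\,|V_t^N - U_t^N|\,|W_t^N|$ fails because $1/U_t^N$ is not controlled, and the subsequent Cauchy--Schwarz split does not deliver the claimed $N^{-1}$ rate. (If instead you absorb $1/U_t^N$ into $W_t^N$ via the valid pathwise bound $|W_t^N|/U_t^N\leq C\sum_s\rho^{\max(t-s,s-r-t,0)}\osc(h_s)$, you recover only $N^{-1/2}$.)

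The paper's proof repairs this by multiplying and dividing by $N^{-1}\sumwght[N]{t}$, so that $U_t^N$ appears only through the ratio $(N^{-1}\sumwght[N]{t})/(U_t^N V_t^N)$, which \emph{is} pathwise bounded. This leaves the random denominator $N^{-1}\sumwght[N]{t}$ under $W_t^N$, still unbounded below; the paper then performs a second splitting, replacing $N^{-1}\sumwght[N]{t}$ by its conditional expectation $\phi_{t-1}^N[Mg_t]/\phi_{t-1}^N[\vartheta_t]$, which is bounded below by $c_-/\sup_t|\vartheta_t|_\infty$ thanks to A\ref{assum:mixing}\eqref{assum:mixing:int}. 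The main term is then $V_{t,T}^N\cdot W_t^N/\CExp{N^{-1}\sumwght[N]{t}}{\mathcal{F}_{t-1}^N}$, to which your Cauchy--Schwarz argument applies directly and gives $N^{-1}$; the residual is $V_{t,T}^N$ times a pathwise-bounded quantity times the centered fluctuation $\CExp{N^{-1}\sumwght[N]{t}}{\mathcal{F}_{t-1}^N} - N^{-1}\sumwght[N]{t}$, and Cauchy--Schwarz on the two centered factors again gives $N^{-1}$. Your argument can be completed along exactly these lines, but the step as written is incorrect.
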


\begin{proof}
According to \eqref{eq:defC}, $C_{t,T}^N(\addfunc{T,\size})$ can be written
\begin{equation}\label{eq:decC}
C_{t,T}^N(\addfunc{T,\size}) = U_{t,T}^NV_{t,T}^NW_{t,T}^N\eqsp,
\end{equation}
where
\begin{align*}
U_{t,T}^N &= \dfrac{ N^{-1} \sum_{\ell=1}^N \ewght{t}{N,\ell} \frac{G^N_{t,T}\addfunc{T,\size}(\epart{t}{N,\ell})}{|\mathcal{L}_{t,T}\1|_\infty}}{ N^{-1} \Omega_t^N}\eqsp,\\
V_{t,T}^N &=N^{-1}\sum_{\ell=1}^N\left(\mathbb{E}\left[\left.\ewght{t}{N,1} \frac{\mathcal{L}_{t,T}\1(\epart{t}{N,1})}{|\mathcal{L}_{t,T}\1|_\infty}\right|\mathcal{F}_{t-1}\right]- \ewght{t}{N,\ell} \frac{\mathcal{L}_{t,T}\1(\epart{t}{N,\ell})}{|\mathcal{L}_{t,T}\1|_\infty}\right)\eqsp,\\
W_{t,T}^N &= \frac{ N^{-1} \Omega_t^N }{\mathbb{E}\left[\left.\ewght{t}{N,1} \frac{\mathcal{L}_{t,T}\1(\epart{t}{N,1})}{|\mathcal{L}_{t,T}\1|_\infty}\right|\mathcal{F}_{t-1}\right]N^{-1}\sum_{\ell=1}^N \ewght{t}{N,\ell} \frac{\mathcal{L}_{t,T}\1(\epart{t}{N,\ell})}{|\mathcal{L}_{t,T}\1|_\infty}}\eqsp,
\end{align*}
and where $\Omega_t^N$ is defined by \eqref{eq:defOmega}.
Using the last statement of Lemma \ref{Lem:Upperbounds}, we get the following bound:
\begin{align*}
\mathbb{E}\left[\left.\ewght{t}{N,1} \frac{\mathcal{L}_{t,T}\1(\epart{t}{N,1})}{|\mathcal{L}_{t,T}\1|_\infty}\right|\mathcal{F}_{t-1}\right] & =  \frac{\phi_{t-1}^N\left[\mathcal{L}_{t-1,T}\1/|\mathcal{L}_{t,T}\1|_\infty\right]}{\phi_{t-1}^N[\vartheta_t]}\\
&\geq \frac{c_-\sigma_-}{\left|\vartheta_t\right|_{\infty}\sigma_+}\\
\frac{\mathcal{L}_{t,T}\1(\epart{t}{N,\ell})}{|\mathcal{L}_{t,T}\1|_\infty}&\geq \frac{\sigma_-}{\sigma_+}\eqsp,
\end{align*}
which implies
\begin{equation}\label{Eq:SupIneqForW}
\left|W_{t,T}^N\right| \leq \left( \dfrac{\sigma_+}{\sigma_-}\right)^2 \dfrac{|\vartheta_t|_\infty}{c_-}\eqsp.
\end{equation}
Then, $\left|C_{t,T}^N(\addfunc{T,\size})\right| \leq C\left|U_{t,T}^N\right|\left|V_{t,T}^N\right|$ and we can use the decomposition
\[
U_{t,T}^NV_{t,T}^N = V_{t,T}^N \left[\dfrac{ N^{-1} \sum_{\ell=1}^N a_{t,T}^{N,\ell}}{\mathbb{E}\left[\left.\widetilde{\Omega}_t^N\right|\mathcal{F}_{t-1}\right]} + \dfrac{ N^{-1} \sum_{\ell=1}^Na_{t,T}^{N,\ell}}{\widetilde{\Omega}_t^N\mathbb{E}\left[\left.\widetilde{\Omega}_t^N\right|\mathcal{F}_{t-1}\right]}\left(\mathbb{E}\left[\left.\widetilde{\Omega}_t^N\right|\mathcal{F}_{t-1}\right]-\widetilde{\Omega}_t^N\right)\right]\eqsp,
\]
where $a_{t,T}^{N,\ell}\eqdef \ewght{t}{N,\ell} \frac{G^N_{t,T}\addfunc{T,\size}(\epart{t}{N,\ell})}{|\mathcal{L}_{t,T}\1|_\infty}$ and $\widetilde{\Omega}_t^N\eqdef N^{-1}\Omega_t^N$. By \eqref{eq:whExpect}, $\CExp{\ewght{t}{N,1}}{\mathcal{F}_{t-1}^N} = \frac{\phi_{t-1}^N\left[Mg_t\right]}{\phi_{t-1}^N[\vartheta_t]}$ and then, by A\ref{assum:mixing}\eqref{assum:mixing:int}, A\ref{assum:boundalgo} and \eqref{eq:bound-G},
\[
\dfrac{1}{\mathbb{E}\left[\left.\widetilde{\Omega}_t^N\right|\mathcal{F}_{t-1}\right]} \le \frac{|\vartheta_t|_\infty}{c_{-}}\quad\mbox{and}\quad\dfrac{ N^{-1} \sum_{\ell=1}^Na_{t,T}^{N,\ell}}{\widetilde{\Omega}_t^N\mathbb{E}\left[\left.\widetilde{\Omega}_t^N\right|\mathcal{F}_{t-1}\right]}\le C\frac{|\vartheta_t|_\infty}{c_{-}}\sum_{s=r}^T \rho^{\max(t-s,s-r-t,0)}\osc(h_s)\eqsp.
\]
Therefore, $\left|C_{t,T}^N(\addfunc{T,\size})\right| \leq C\left(C_{t,T}^{1,N}+\sum_{s=r}^T \rho^{\max(t-s,s-r-t,0)}\osc(h_s)C_{t,T}^{2,N}\right)$ where
\[
C_{t,T}^{1,N}\eqdef V_{t,T}^N \cdot N^{-1} \sum_{\ell=1}^N a_{t,T}^{N,\ell}\quad\mbox{and}\quad C_{t,T}^{2,N}\eqdef V_{t,T}^N\left|\mathbb{E}\left[\left.\widetilde{\Omega}_t^N\right|\mathcal{F}_{t-1}\right]-\widetilde{\Omega}_t^N\right|\eqsp.
\]

The random variables $\left\{\ewght{t}{N,\ell} \frac{\mathcal{L}_{t,T}\1(\epart{t}{N,\ell})}{|\mathcal{L}_{t,T}\1|_\infty}\right\}_{\ell=1}^{N}$ being bounded and conditionally independent given $\mathcal{F}_{t-1}^N$, following the same steps as in the proof of Proposition \ref{Prop:NormD}, there exists a constant $C$ (depending only on $q$, $\sigma_-$, $\sigma_+$, $c_-$ and $\underset{t\geq 0}{\sup}|\ewght{t}{}|_{\infty} $) such that $\normL{2q}{V_{t,T}^N}\leq CN^{-1/2}$. Similarly
\[
\normL{2q}{N^{-1} \sum_{\ell=1}^N a_{t,T}^{N,\ell}}\le C\frac{\sum_{s=r}^T \rho^{\max(t-s,s-r-t,0)}\osc(h_s)}{N^{1/2}}\eqsp,
\]
and
\[\normL{2q}{\mathbb{E}\left[\left.\widetilde{\Omega}_t^N\right|\mathcal{F}_{t-1}\right]-\widetilde{\Omega}_t^N}\le \frac{C}{N^{1/2}}\eqsp.
\]
The Cauchy-Schwarz inequality concludes the proof of \eqref{Eq:NormC}.
\end{proof}

The proof of Theorem \ref{Th:MainResult} is now concluded for the FFBS estimator $\phi^N_{0:T|T}\left[\addfunc{T,\size}\right]$ and we can proceed to the proof for the FFBSi estimator. We preface the proof  of Theorem \ref{Th:MainResult} for the FFBSi estimator $\widetilde \phi^N_{0:T|T}$ by the following Lemma. We first define the backward filtration $\left\{\mathcal{G}_{t,T}^N\right\}_{t=0}^{T+1}$ by
\begin{equation*}\label{eq:BackwardFiltration}
\begin{cases}
\mathcal{G}_{T+1,T}^N \eqdef \mathcal{F}_T^N   \eqsp,  \\
\mathcal{G}_{t,T}^N \eqdef \mathcal{F}_T^N \vee \sigma\left\{ J_u^\ell, 1\leq\ell\leq N, t \leq u \leq T \right\},\quad \forall \eqsp t \in\{0,\dots,T\}\eqsp.
\end{cases}
\end{equation*}
\begin{lemma}\label{Lem:BackInitForget}
Assume A\ref{assum:mixing}--\ref{assum:boundalgo}. Let $\ell\in\{1,\dots,N\}$ and $T<+\infty$. For any bounded measurable function $h$ on $\Xset^{r+1}$ we have,
\begin{enumerate}[(i)]
\item \label{Lem:BackInitForget:claim1}for all $u,t$ such that $r \leq t \leq u \leq T$,
\begin{equation*}
\left|\mathbb{E}\left[ h\left(\epart{t-r:t}{N,J_{t-r:t}^\ell}\right)\middle|\mathcal{G}_{u,T}^N\right] - \mathbb{E}\left[ h\left(\epart{t-r:t}{N,J_{t-r:t}^\ell}\right)\middle|\mathcal{G}_{u+1,T}^N\right]\right| \leq \rho^{u-t} \osc(h)\eqsp,
\end{equation*}
where $\rho$ is defined in  A\ref{assum:mixing}\eqref{assum:mixing:m}.
\item \label{Lem:BackInitForget:claim2} for all $u,t$ such that $t-r \leq u \leq t-1 \leq T$,
\begin{equation*}
\left|\mathbb{E}\left[ h\left(\epart{t-r:t}{N,J_{t-r:t}^\ell}\right)\middle|\mathcal{G}_{u,T}^N\right] - \mathbb{E}\left[ h\left(\epart{t-r:t}{N,J_{t-r:t}^\ell}\right)\middle|\mathcal{G}_{u+1,T}^N\right]\right| \leq \osc(h)\eqsp.
\end{equation*}
\end{enumerate}
\end{lemma}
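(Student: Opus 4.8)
The idea is that $\mathbb{E}[h(\epart{t-r:t}{N,J_{t-r:t}^\ell})\mid\mathcal{G}_{u,T}^N]$ differs from $\mathbb{E}[h(\epart{t-r:t}{N,J_{t-r:t}^\ell})\mid\mathcal{G}_{u+1,T}^N]$ only because, in passing from $\mathcal{G}_{u+1,T}^N$ to $\mathcal{G}_{u,T}^N$, we additionally reveal the backward indices $\{J_u^{\ell'}\}_{\ell'=1}^N$, and for the particular path $\ell$ only the single index $J_u^\ell$ matters. Both conditional expectations are, given $\mathcal{F}_T^N$, integrals of $h$ against a product of backward kernels $\Lambda_{v}^N$; the two cases correspond to whether or not $u$ lies in the ``conditioning horizon'' $\{t-r,\dots,t\}$ of the coordinates that $h$ actually depends on.

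\textbf{Case (ii): $t-r\le u\le t-1$.} Here $h$ depends on $J_{t-r}^\ell,\dots,J_t^\ell$, and $u$ is one of these indices. Conditioning further on $J_u^\ell$ (contained in $\mathcal{G}_{u,T}^N$ but not $\mathcal{G}_{u+1,T}^N$) changes the distribution of the remaining indices, but since $h$ is bounded, the trivial bound by $\osc(h)$ suffices: writing $f(j)\eqdef\mathbb{E}[h(\dots)\mid\mathcal{G}_{u+1,T}^N, J_u^\ell=j]$, one has $\mathbb{E}[h\mid\mathcal{G}_{u,T}^N]=f(J_u^\ell)$ and $\mathbb{E}[h\mid\mathcal{G}_{u+1,T}^N]=\sum_j f(j)\Lambda_u^N(J_{u+1}^\ell,j)$ is a convex combination of the $f(j)$'s, so the difference is at most $\max_j f(j)-\min_j f(j)\le\osc(f)\le\osc(h)$. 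This is immediate.

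\textbf{Case (i): $r\le t\le u\le T$.} Now $u$ lies beyond the coordinates $J_{t-r}^\ell,\dots,J_t^\ell$ on which $h$ depends, so revealing $J_u^\ell$ influences $h$ only through the chain of backward transitions $\Lambda_{t}^N,\dots,\Lambda_{u-1}^N$ linking index $J_t^\ell$ to index $J_u^\ell$. The key is the standard strong-mixing contraction: under A\ref{assum:mixing}\eqref{assum:mixing:m}, for any two starting indices $i,i'$ the laws of $J_t^\ell$ obtained by propagating $\Lambda_{u-1}^N(i,\cdot)\cdots$ versus $\Lambda_{u-1}^N(i',\cdot)\cdots$ backward from time $u-1$ down to time $t$ have total-variation distance at most $\rho^{u-t}$, because each $\Lambda_v^N$ satisfies a Doeblin-type minorization with constant $\sigma_-/\sigma_+$ (this is exactly the mechanism already exploited in \GNorm{} and in Lemma~\ref{Lem:Upperbounds}). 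Writing, as above, $\mathbb{E}[h\mid\mathcal{G}_{u+1,T}^N]$ as the average of $\mathbb{E}[h\mid\mathcal{G}_{u,T}^N,J_u^\ell=j]$ against $\Lambda_u^N(J_{u+1}^\ell,j)$, and noting that $\mathbb{E}[h\mid\mathcal{G}_{u,T}^N]=\mathbb{E}[h\mid\mathcal{G}_{u,T}^N,J_u^\ell]$ differs from each $\mathbb{E}[h\mid\mathcal{G}_{u,T}^N,J_u^\ell=j]$ by at most $\osc(h)$ times the total variation distance between the corresponding conditional laws of $(J_{t-r}^\ell,\dots,J_t^\ell)$, which by the contraction is $\le\rho^{u-t}$, we conclude the bound $\rho^{u-t}\osc(h)$.

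\textbf{Main obstacle.} The delicate point is making the contraction argument rigorous \emph{conditionally on $\mathcal{F}_T^N$}: one must check that, given $\mathcal{F}_T^N$, the backward index chain $\{J_v^\ell\}_v$ is a genuine (inhomogeneous) Markov chain with transitions $\Lambda_v^N$ — this follows from \eqref{eq:distribution-non-homogeneous} — and that the minorization $\Lambda_v^N(i,j)\ge(\sigma_-/\sigma_+)\cdot\big(\ewght{v}{N,j}\m(\epart{v}{N,j},\cdot)/\textstyle\sum_\ell\ewght{v}{N,\ell}\m(\epart{v}{N,\ell},\cdot)\big)\ldots$ holds uniformly in the (random) particle positions, so that the contraction constant $\rho=1-\sigma_-/\sigma_+$ is deterministic. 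Once this is in place the coupling/contraction estimate is routine and the rest is bookkeeping about which time indices fall inside versus outside the support $\{t-r,\dots,t\}$ of the coordinates read by $h$.
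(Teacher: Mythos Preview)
Your proposal is correct and follows essentially the same route as the paper: both exploit that, conditionally on $\mathcal{F}_T^N$, $\{J_v^\ell\}_v$ is an inhomogeneous Markov chain with transitions $\Lambda_v^N$, write the difference of conditional expectations as the difference of two expectations started from $\delta_{J_u^\ell}$ versus (the one-step push-forward of) $\delta_{J_{u+1}^\ell}$, and then invoke the uniform Doeblin minorization $\Lambda_v^N(i,j)\ge(\sigma_-/\sigma_+)\,\ewght{v}{N,j}/\Omega_v^N$ to obtain geometric forgetting with rate $\rho$ for case~(i), while case~(ii) falls out of the trivial $\osc(h)$ bound. The only cosmetic slip is in your displayed minorization, which should read $\Lambda_v^N(i,j)\ge(\sigma_-/\sigma_+)\,\ewght{v}{N,j}/\Omega_v^N$ (the key point being that the lower bound does not depend on $i$); otherwise the argument matches the paper's.
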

\begin{proof}
According to Section \ref{subsec:FFBSi}, for all $\ell\in\{1,\dots,N\}$, $\{ J_{u}^{N,\ell} \}_{u = 0}^T$ is an inhomogeneous Markov chain evolving backward in time with backward kernel $\{\Lambda^N_{u}\}_{u = 0}^{T-1}$. For any $r \leq t \leq u \leq T$, we have
\begin{multline*}
\mathbb{E}\left[ h\left(\epart{t-r:t}{N,J_{t-r:t}^{N,\ell}}\right)\middle|\mathcal{G}_{u,T}^N\right] - \mathbb{E}\left[ h\left(\epart{t-r:t}{N,J_{t-r:t}^{N,\ell}}\right)\middle|\mathcal{G}_{u+1,T}^N\right] \\
= \sum_{j_{t:u}} \left[ \delta_{J_u^{N,\ell}}(j_u) - \left( \Lambda_u(J_{u+1}^{N,\ell},j_u) \bold{1}_{u<T} + \frac{\ewght{T}{N,j_u}}{\Omega_u} \bold{1}_{u=T}\right) \right]\\
\times\prod_{\ell=u}^{t+1} \Lambda_{\ell-1}^N(j_\ell,j_{\ell-1})\sum_{j_{t-r:t-1}}\prod_{\ell=t}^{t-r+1}\Lambda_{\ell-1}^N(j_\ell,j_{\ell-1})h\left(\epart{t-r:t}{N,j_{t-r:t}}\right)\eqsp.
\end{multline*}
The RHS of this equation is the difference between two expectations started with two different initial distributions.
Under A\ref{assum:mixing}\eqref{assum:mixing:m}, the backward kernel satisfies the uniform Doeblin condition,
\begin{equation*}
\forall (i,j)\in\{1,\dots,N\}^2\quad \Lambda_s^N(i,j) \geq \frac{\sigma_-}{\sigma_+} \frac{\omega_s^i}{\Omega_s^N}\eqsp,
\end{equation*}
and the proof is completed by the exponential forgetting of the backward kernel (see \cite{cappe:moulines:ryden:2005,delmoral:guionnet:2001}). The proof of \eqref{Lem:BackInitForget:claim2} follows exactly the same lines.
\end{proof}
To compute an upper-bound for the $\rmL_q$-mean error of the FFBSi algorithm, we may define the difference between the FFBS and the FFBSi estimators:
\begin{equation}\label{eq:errorsmall}
\errorsmall\left[\addfunc{T,\size}\right] = \widetilde{\phi}^N_{0:T|T}\left[\addfunc{T,\size}\right] - \phi^N_{0:T|T}\left[\addfunc{T,\size}\right]\eqsp.
\end{equation}
\begin{proof}[Proof of Theorem \ref{Th:MainResult} for the FFBSi estimator]
The difference between the FFBS and the FFBSi estimators, $\errorsmall$, defined in \eqref{eq:errorsmall}, can be written
\begin{align*}
\errorsmall\left[\addfunc{T,\size}\right] &= \frac{1}{N} \sum_{\ell=1}^N \sum_{t=r}^T h_t\left(\epart{t-r:t}{N,J_{t-r:t}^{N,\ell}}\right) - \mathbb{E}\left[ h_t\left(\epart{t-r:t}{N,J_{t-r:t}^{N,1}}\right)\middle|\mathcal{F}_T^N\right] \\
 &= \frac{1}{N} \sum_{\ell=1}^N \sum_{t=r}^T \sum_{u=t-r}^T \mathbb{E}\left[ h_t\left(\epart{t-r:t}{N,J_{t-r:t}^{N,\ell}}\right)\middle|\mathcal{G}_{u,T}^N\right] - \mathbb{E}\left[ h_t\left(\epart{t-r:t}{N,J_{t-r:t}^{N,\ell}}\right)\middle|\mathcal{G}_{u+1,T}^N\right]\\
 &= \frac{1}{N} \sum_{\ell=1}^N \sum_{u=0}^T \zeta_u^{N,\ell} \eqsp,
\end{align*}
where
\begin{align*}
\zeta_u^{N,\ell}&\eqdef \sum_{t=r}^{(u+r)\wedge T} \mathbb{E}\left[ h_t\left(\epart{t-r:t}{N,J_{t-r:t}^{N,\ell}}\right)\middle|\mathcal{G}_{u,T}^N\right] - \mathbb{E}\left[ h_t\left(\epart{t-r:t}{N,J_{t-r:t}^{N,\ell}}\right)\middle|\mathcal{G}_{u+1,T}^N\right]\eqsp.
\end{align*}
For all $\ell \in \{1,\dots,N\}$ and all $u\in\{0,\dots,T\}$, the random variable $\zeta_u^{N,\ell}$ is $\mathcal{G}_{u,T}^N$-measurable and $\mathbb{E}\left[ \zeta_u^{N,\ell} \middle| \mathcal{G}_{u+1,T}^N\right]=0$ so that $\zeta_u^{N,\ell}$ can be seen as the increment of a backward martingale. Hence, since $q\geq 2$, using the Burkholder inequality (see \cite[Theorem 2.10, page 23]{hall:heyde:1980}), there exists a constant $C$ (depending only on $q$, $\sigma_-$, $\sigma_+$, $c_-$, $\underset{t\geq 1}{\sup}|\vartheta_t|_{\infty}$ and $\underset{t\geq 0}{\sup}|\ewght{t}{}|_{\infty} $) such that:
\begin{equation}\label{eq:backwardburk}
\normL{q}{\errorsmall\left[\addfunc{T,\size}\right]}\le C\left\{\sum_{u=0}^{T}\normLpw{q}{N^{-1}\sum_{\ell=1}^N \zeta_u^{N,\ell}}^{2/q}\right\}^{1/2}\eqsp.
\end{equation}
Then, since the random variables $\{\zeta_u^{N,\ell}\}_{\ell=1}^{N}$ are conditionally independent and centered conditionally to $\mathcal{G}_{u+1,T}^N$, using the Burkholder inequality once again implies:
\begin{equation}\label{eq:backwardMZ}
\CExp{\left|\sum_{\ell=1}^N \zeta_u^{N,\ell}\right|^q}{\mathcal{G}_{u+1,T}^N}\leq CN^{q/2-1}\sum_{\ell=1}^N\CExp{\left|\zeta_u^{N,\ell}\right|^q}{\mathcal{G}_{u+1,T}^N}\eqsp.
\end{equation}
Furthermore, according to Lemma~\ref{Lem:BackInitForget}\eqref{Lem:BackInitForget:claim1},
\begin{multline}\label{eq:backwardineq}
\left| \zeta_u^{N,\ell}\right| \leq \sum_{t=r}^u \left|\mathbb{E}\left[ h_t\left(\epart{t-r:t}{N,J_{t-r:t}^{N,\ell}}\right)\middle|\mathcal{G}_{u,T}^N\right] - \mathbb{E}\left[ h_t\left(\epart{t-r:t}{N,J_{t-r:t}^{N,\ell}}\right)\middle|\mathcal{G}_{u+1,T}^N\right]\right|\\
+ \sum_{t=u+1}^{(u+\size)\wedge T} \left|\mathbb{E}\left[ h_t\left(\epart{t-r:t}{N,J_{t-r:t}^{N,\ell}}\right)\middle|\mathcal{G}_{u,T}^N\right] - \mathbb{E}\left[ h_t\left(\epart{t-r:t}{N,J_{t-r:t}^{N,\ell}}\right)\middle|\mathcal{G}_{u+1,T}^N\right]\right|\\
\leq \sum_{t=r}^u \rho^{u-t} \osc(h_t) + \sum_{t=u+1}^{(u+\size)\wedge T} \osc(h_t)\eqsp.
\end{multline}
Putting \eqref{eq:backwardburk}, \eqref{eq:backwardMZ} and \eqref{eq:backwardineq} together leads to
\begin{equation*}
\normL{q}{\errorsmall\left[\addfunc{T,\size}\right]}\leq \frac{C}{\sqrt{N}}\left\{\sum_{u=0}^{T}\left(\sum_{t=\size}^{(u+\size)\wedge T}\rho^{(u-t)\vee0}\osc(h_{t})\right)^{2}\right\}^{1/2}\eqsp.
\end{equation*}
Using the Holder inequality as in the proof of Proposition~\ref{Prop:NormD} yields
\[
\normL{q}{\errorsmall\left[\addfunc{T,\size}\right]}\leq \frac{C}{\sqrt{N}}\sqrt{1+\size}\left(\sqrt{1+\size}\wedge\sqrt{T-\size+1}\right)\left(\sum_{s=\size}^{T}\osc(h_{s})^{2}\right)^{1/2}\eqsp,
\]
and the proof of Theorem \ref{Th:MainResult} for the FFBSi estimator is derived from the triangle inequality:
\begin{equation*}
\normL{q}{\phi_{0:T|T}\left(\addfunc{T,\size}\right)-\widetilde{\phi}_{0:T|T}^N\left(\addfunc{T,\size}\right)} \leq \normL{q}{\error}+\normL{q}{\errorsmall\left[\addfunc{T,\size}\right]}\eqsp,
\end{equation*}
where $\error$ is defined by \eqref{eq:def-error} and $\errorsmall\left[\addfunc{T,\size}\right]$ is defined by \eqref{eq:errorsmall}.
\end{proof}

\section{Proof of Theorem~\ref{Th:ExpIneq}}
\label{sec:proof:theorem:expineq}
We preface the proof of the Theorem by showing that the martingale term of the error $\error$ (which is defined by \eqref{eq:def-error}) satisfies an exponential deviation inequality in the following Proposition.
\begin{proposition}
\label{Prop:ExpD}
Assume A\ref{assum:mixing}--\ref{assum:boundalgo}. There exists a constant $C$ (depending only on $\sigma_-$, $\sigma_+$, $c_-$, $\underset{t\geq 1}{\sup}|\vartheta_t|_{\infty}$ and $\underset{t\geq 0}{\sup}|\ewght{t}{}|_{\infty} $)  such that for any $T<\infty$, any $N \geq 1$, any $\varepsilon > 0$, any integer $r$ and any bounded and measurable functions $\{h_{s}\}_{s=r}^{T}$ on $\Xset^{\size+1}$,
\begin{equation}\label{Eq:ExpD}
\mathbb{P}\left\{\left|\sum_{t=0}^T D_{t,T}^N(\addfunc{T,\size})\right| > \varepsilon \right\} \leq 2\exp\left(-\frac{CN\varepsilon^2}{\Theta_{\size,T}\sum_{s=\size}^{T}\osc(h_{s})^{2}}\right)\eqsp,
\end{equation}
where $D_{t,T}^N$ is defined in \eqref{eq:defD} and $\Theta_{\size,T}$ is defined by \eqref{eq:defTheta}.
\end{proposition}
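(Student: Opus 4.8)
The plan is to run an Azuma--Hoeffding argument on the forward martingale $\{D_{t,T}^N(\addfunc{T,\size})\}_{t=0}^{T}$, with one refinement: each increment is itself an $N$-average of conditionally i.i.d.\ centered bounded variables, and it is this averaging that produces the factor $N$ in the exponent (plain Azuma--Hoeffding on the $D_{t,T}^N$'s would miss it).

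First I would recall from \eqref{eq:defD} the representation $D_{t,T}^N(\addfunc{T,\size}) = K_t^N\,N^{-1}\sum_{\ell=1}^{N}a_{t,T}^{N,\ell}$, where $a_{t,T}^{N,\ell}\eqdef\ewght{t}{N,\ell}\,G_{t,T}^N\addfunc{T,\size}(\epart{t}{N,\ell})/|\mathcal{L}_{t,T}\1|_{\infty}$ and $K_t^N\eqdef\phi_{t-1}^N[\vartheta_t]\big/\phi_{t-1}^N[\mathcal{L}_{t-1,T}\1/|\mathcal{L}_{t,T}\1|_{\infty}]$. By Lemma~\ref{Lem:Upperbounds}\eqref{Lem:Upperbounds:ind} the variables $\{a_{t,T}^{N,\ell}\}_{\ell=1}^{N}$ are, conditionally on $\mathcal{F}_{t-1}^N$, i.i.d.\ and centered; by Lemma~\ref{Lem:Upperbounds}\eqref{Lem:Upperbounds:boundup} together with A\ref{assum:boundalgo} they satisfy $|a_{t,T}^{N,\ell}|\leq\bar\omega\,b_t$ with $\bar\omega\eqdef\sup_{t\geq0}|\ewght{t}{}|_{\infty}$ and $b_t\eqdef\sum_{s=\size}^{T}\rho^{\max(t-s,s-\size-t,0)}\osc(h_s)$; and by Lemma~\ref{Lem:Upperbounds}\eqref{Lem:Upperbounds:boundlow} with A\ref{assum:mixing}, $K_t^N$ is $\mathcal{F}_{t-1}^N$-measurable with $0\leq K_t^N\leq\kappa\eqdef\sigma_+\sup_{t\geq1}|\vartheta_t|_{\infty}/(\sigma_-c_-)$ (using $\phi_{t-1}^N[\vartheta_t]\leq\sup_{t\geq1}|\vartheta_t|_{\infty}$ and $\phi_{t-1}^N[\mathcal{L}_{t-1,T}\1/|\mathcal{L}_{t,T}\1|_{\infty}]\geq c_-\sigma_-/\sigma_+$). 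In particular $D_{t,T}^N(\addfunc{T,\size})$ is $\mathcal{F}_t^N$-measurable and $\CExp{D_{t,T}^N(\addfunc{T,\size})}{\mathcal{F}_{t-1}^N}=0$.

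Next I would apply Hoeffding's lemma conditionally on $\mathcal{F}_{t-1}^N$: since $K_t^N$ is $\mathcal{F}_{t-1}^N$-measurable and the $a_{t,T}^{N,\ell}$ are, given $\mathcal{F}_{t-1}^N$, independent, centered and supported in an interval of length at most $2\bar\omega b_t$,
\[
\CExp{\exp\bigl(\lambda D_{t,T}^N(\addfunc{T,\size})\bigr)}{\mathcal{F}_{t-1}^N}=\prod_{\ell=1}^{N}\CExp{\exp\bigl(\lambda K_t^N a_{t,T}^{N,\ell}/N\bigr)}{\mathcal{F}_{t-1}^N}\leq\exp\!\left(\frac{\lambda^2\kappa^2\bar\omega^2 b_t^2}{2N}\right)\eqsp,
\]
for every $\lambda\in\mathbb{R}$. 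Multiplying these inequalities for $t=0,\dots,T$ and peeling off the conditional expectations one at a time (using that $\sum_{t=0}^{u-1}D_{t,T}^N(\addfunc{T,\size})$ is $\mathcal{F}_{u-1}^N$-measurable) gives $\mathbb{E}[\exp(\lambda\sum_{t=0}^{T}D_{t,T}^N(\addfunc{T,\size}))]\leq\exp(\lambda^2\kappa^2\bar\omega^2(2N)^{-1}\sum_{t=0}^{T}b_t^2)$. Optimizing $\lambda$ in the Chernoff bound, and applying the same estimate to $-\sum_{t}D_{t,T}^N(\addfunc{T,\size})$, yields
\[
\mathbb{P}\left\{\Bigl|\sum_{t=0}^{T}D_{t,T}^N(\addfunc{T,\size})\Bigr|>\varepsilon\right\}\leq 2\exp\!\left(-\frac{N\varepsilon^2}{2\kappa^2\bar\omega^2\sum_{t=0}^{T}b_t^2}\right)\eqsp.
\]

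It then remains to prove the deterministic estimate $\sum_{t=0}^{T}b_t^2\leq C\,\Theta_{\size,T}\sum_{s=\size}^{T}\osc(h_s)^2$, which is the only delicate point. Writing $w_{t,s}\eqdef\rho^{\max(t-s,s-\size-t,0)}$, expanding the square in $b_t^2$ and using $\osc(h_s)\osc(h_{s'})\leq\tfrac12(\osc(h_s)^2+\osc(h_{s'})^2)$ together with the symmetry $s\leftrightarrow s'$, one gets
\[
\sum_{t=0}^{T}b_t^2\leq\Bigl(\max_{0\leq t\leq T}\sum_{s'=\size}^{T}w_{t,s'}\Bigr)\sum_{s=\size}^{T}\osc(h_s)^2\sum_{t=0}^{T}w_{t,s}\eqsp.
\]
For fixed $s\in\{\size,\dots,T\}$ the exponent $\max(t-s,s-\size-t,0)$ vanishes exactly for the $\size+1$ integers $t\in[s-\size,s]$ and grows by one unit on either side, so $\sum_{t=0}^{T}w_{t,s}\leq(\size+1)+2\rho/(1-\rho)\leq C(1+\size)$; the same computation in the variable $s'$ gives $\sum_{s'=\size}^{T}w_{t,s'}\leq C(1+\size)$, while trivially $\sum_{s'=\size}^{T}w_{t,s'}\leq T-\size+1$ because this sum has $T-\size+1$ terms, each at most one. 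Taking the minimum of the last two bounds contributes the factor $(1+\size)\wedge(T-\size+1)$, and multiplication by $C(1+\size)$ reproduces $C\,\Theta_{\size,T}$ with $\Theta_{\size,T}$ as in \eqref{eq:defTheta}. Substituting back into the previous display and absorbing $\kappa$, $\bar\omega$ and the geometric-series constants into $C$ concludes the proof. The main obstacle is precisely this combinatorial estimate: one must combine the geometric decay of $w_{t,s}$ with the crude cardinality bound on the number of summands to obtain $(1+\size)\wedge(T-\size+1)$ rather than the weaker $(1+\size)$ or $T+1$.
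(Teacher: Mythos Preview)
Your proof is correct and follows essentially the same route as the paper: the paper flattens $\sum_{t=0}^{T}D_{t,T}^N(\addfunc{T,\size})$ into $N(T+1)$ bounded martingale increments $\upsilon^N_{Nt+\ell}=K_t^N N^{-1}a_{t,T}^{N,\ell}$ with respect to the finer filtration $\mathcal{H}^N_{Nt+\ell}=\mathcal{F}^N_{t-1}\vee\sigma\{(\ewght{t}{N,i},\epart{t}{N,i}),\,i\leq\ell\}$ and applies Azuma--Hoeffding directly, which is exactly your conditional Hoeffding--Chernoff argument written at the single-particle level. Your explicit derivation of the bound $\sum_{t}b_t^2\leq C\,\Theta_{\size,T}\sum_s\osc(h_s)^2$ is also correct and makes precise a step the paper leaves implicit (it is the same Cauchy--Schwarz/cardinality computation appearing in the proof of Proposition~\ref{Prop:NormD}).
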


\begin{proof}
According to the definition of $D_{t,T}^N(\addfunc{T,\size})$ given in \eqref{eq:defD}, we can write
\begin{equation*}
\sum_{t=0}^T D_{t,T}^N(\addfunc{T,\size}) = \sum_{k=1}^{N(T+1)} \upsilon^N_k  \eqsp,
\end{equation*}
where for all $t\in\{0,\dots,T\}$ and $\ell\in\{1,\dots,N\}$, $\upsilon^N_{Nt+\ell}$ is defined by
\begin{equation*}
\upsilon^N_{Nt+\ell} = \frac{\phi_{t-1}^N[\vartheta_t]}{\phi_{t-1}^N\left[\frac{\mathcal{L}_{t-1,T}\1}{|\mathcal{L}_{t,T}\1|_{\infty}}\right]}N^{-1}\ewght{t}{N,\ell} \frac{G_{t,T}^N\addfunc{T,\size}(\epart{t}{N,\ell})}{|\mathcal{L}_{t,T}\1|_{\infty}}    \eqsp,
\end{equation*}
and is bounded by (see \eqref{eq:bound-G})
\begin{equation*}
\left| \upsilon^N_{Nt+\ell} \right| \leq C N^{-1} \sum_{s=r}^T \rho^{\max(t-s,s-r-t,0)}\osc(h_s)   \eqsp.
\end{equation*}
Furthermore, we define the filtration $\left\{ \mathcal{H}^N_k \right\}_{k=1}^{N(T+1)}$, for all $t\in\{0,\dots,T\}$ and $\ell\in\{1,\dots,N\}$, by:
\begin{equation*}
\mathcal{H}^N_{Nt+\ell} \eqdef \mathcal{F}^N_{t-1} \vee \sigma\left\{ \left(\ewght{t}{N,i},\epart{t}{N,i}\right), 1 \leq i \leq \ell \right\} \eqsp,
\end{equation*}
with the convention $\mathcal{F}^N_{-1} = \sigma(Y_{0:T})$. Then, according to Lemma \ref{Lem:Upperbounds}, $\left\{\upsilon_k \right\}_{k=1}^{N(T+1)}$ is martingale increment for the filtration $\{\mathcal{H}_{k}^{N}\}_{k=1}^{N(T+1)}$ and the Azuma-Hoeffding inequality completes the proof.
\end{proof}

\begin{proposition}
\label{Prop:ExpC}
Assume A\ref{assum:mixing}--\ref{assum:boundalgo}. There exists a constant $C$ (depending only on $\sigma_-$, $\sigma_+$, $c_-$, $\underset{t\geq 1}{\sup}|\vartheta_t|_{\infty}$ and $\underset{t\geq 0}{\sup}|\ewght{t}{}|_{\infty} $)  such that for any $T<\infty$, any $N \geq 1$, any $\varepsilon > 0$, any integer $r$ and any bounded and measurable functions $\{h_{s}\}_{s=r}^{T}$ on $\Xset^{\size+1}$,
\begin{equation}\label{Eq:ExpC}
\mathbb{P}\left\{\left|\sum_{t=0}^T C_{t,T}^N(\addfunc{T,\size})\right| > \varepsilon \right\} \leq 8\exp\left(-\frac{CN\varepsilon}{(1+\size)\sum_{s=\size}^{T}\osc(h_{s})}\right)\eqsp.
\end{equation}
where $C_{t,T}^N(F)$ is defined in \eqref{eq:defC}.
\end{proposition}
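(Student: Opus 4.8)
The plan is to recycle the algebraic decomposition of $C_{t,T}^N$ worked out in the proof of Proposition~\ref{Prop:NormC}. Writing, as there, $C_{t,T}^N(\addfunc{T,\size}) = U_{t,T}^N V_{t,T}^N W_{t,T}^N$ with $|W_{t,T}^N|$ bounded by a deterministic constant by \eqref{Eq:SupIneqForW}, and splitting $U_{t,T}^N V_{t,T}^N$ exactly as in that proof, one obtains
\begin{equation*}
\left|C_{t,T}^N(\addfunc{T,\size})\right| \leq C\left(C_{t,T}^{1,N} + \beta_t\, C_{t,T}^{2,N}\right),\qquad \beta_t \eqdef \sum_{s=\size}^T \rho^{\max(t-s,s-\size-t,0)}\osc(h_s)\eqsp,
\end{equation*}
with $C_{t,T}^{1,N} = |V_{t,T}^N|\,\big|N^{-1}\sum_{\ell=1}^N a_{t,T}^{N,\ell}\big|$ and $C_{t,T}^{2,N} = |V_{t,T}^N|\,\big|\CExp{\widetilde{\Omega}_t^N}{\mathcal{F}_{t-1}^N} - \widetilde{\Omega}_t^N\big|$. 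The structural fact to exploit is that each of $C_{t,T}^{1,N}$ and $\beta_t C_{t,T}^{2,N}$ is, up to a deterministic factor of order $\beta_t$, a \emph{product of two empirical averages} built from the time-$t$ particles which, conditionally on $\mathcal{F}_{t-1}^N$, are i.i.d., bounded and centered (by Lemma~\ref{Lem:Upperbounds}). Consequently each has conditional size $O(\beta_t/N)$ — one power of $N$ rather than $N^{1/2}$ — and it is this that will produce the exponent \emph{linear} in $\varepsilon$ in \eqref{Eq:ExpC}.

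The key tool is a per-time-step exponential bound for such products, which is the ``Hoeffding-type inequality for ratios of random variables'' announced before the statement. If $A^N=N^{-1}\sum_{\ell=1}^N A_\ell$ and $B^N=N^{-1}\sum_{\ell=1}^N B_\ell$ with $\{A_\ell\}_\ell,\{B_\ell\}_\ell$ conditionally i.i.d.\ given a $\sigma$-field $\mathcal{G}$, both conditionally centered, $|A_\ell|\le a$ and $|B_\ell|\le b$, then the elementary implication ``$|A^N B^N|>\delta$ forces $|A^N|>(a\delta/b)^{1/2}$ or $|B^N|>(b\delta/a)^{1/2}$'' combined with the ordinary conditional Hoeffding inequality for averages of bounded centered variables gives $\CProb{|A^N B^N|>\delta}{\mathcal{G}}\le 4\exp(-cN\delta/(ab))$; integrating this tail against $e^{\lambda\delta}\,\rmd\delta$ yields
\begin{equation*}
\CExp{\exp\left(\lambda\,|A^N B^N|\right)}{\mathcal{G}} \leq \exp\left(\frac{C\lambda\, ab}{N}\right)\eqsp,\qquad 0\le\lambda\le \frac{cN}{ab}\eqsp.
\end{equation*}
The work in this step is to phrase this lemma, to check that the conditional independence and centering are precisely those of Lemma~\ref{Lem:Upperbounds}, and to verify that the denominators occurring in $U_{t,T}^N$ and $V_{t,T}^N$ are bounded below by deterministic constants through Lemma~\ref{Lem:Upperbounds}\eqref{Lem:Upperbounds:boundlow} and A\ref{assum:mixing}, so that the products $C_{t,T}^{1,N}$ and $\beta_t C_{t,T}^{2,N}$ genuinely satisfy $ab\asymp\beta_t$.

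The sum over $t$ is then assembled through the tower property rather than by a union bound over $t$ (which would introduce a spurious factor $T+1$): since $C_{t,T}^{1,N}$ is $\mathcal{F}_t^N$-measurable and its conditional Laplace transform given $\mathcal{F}_{t-1}^N$ is bounded by the deterministic quantity $\exp(C\lambda\beta_t/N)$, iterating over $t=T,T-1,\dots,0$ gives $\mathbb{E}\!\left[\exp(\lambda\sum_{t=0}^T C_{t,T}^{1,N})\right]\le\exp\!\big(C\lambda N^{-1}\sum_{t=0}^T\beta_t\big)$ for $0\le\lambda\le cN/\max_t\beta_t$, and a Chernoff bound with $\lambda$ of order $N/\max_t\beta_t$ yields $\mathbb{P}\{\sum_{t}C_{t,T}^{1,N}>\eta\}\le\exp(-cN\eta/\max_t\beta_t)$ as soon as $\eta\gtrsim N^{-1}\sum_t\beta_t$ — while for smaller $\eta$ the asserted bound exceeds $1$ and there is nothing to prove. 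The same argument handles $\sum_t\beta_t C_{t,T}^{2,N}$. To conclude, one invokes $\max_t\beta_t\le\sum_{s=\size}^T\osc(h_s)$ (all exponents being nonnegative) together with $\sum_{t=0}^T\beta_t=\sum_{s=\size}^T\osc(h_s)\sum_{t=0}^T\rho^{\max(t-s,s-\size-t,0)}\le C(1+\size)\sum_{s=\size}^T\osc(h_s)$ (for fixed $s$ the exponent vanishes for at most $\size+1$ values of $t$ and is geometrically summable otherwise), and collects the finitely many contributions coming from $C^{1,N}$, $C^{2,N}$, the bounded multiplier $W_{t,T}^N$ and the two signs, which accounts for the constant $8$ in \eqref{Eq:ExpC}. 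The delicate points are the product-type moment generating function estimate with the correct $1/N$ scaling and, equally, the realisation that the summation over $t$ must proceed through this generating-function/tower mechanism and not term by term.
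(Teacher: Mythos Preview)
Your argument is correct, but it follows a different path from the paper's. The paper stays with the $U_{t,T}^N V_{t,T}^N W_{t,T}^N$ factorisation itself and handles the ratio $U_{t,T}^N$ through Lemma~\ref{lem:inegEssentielle} (this is the ``Hoeffding-type inequality for ratio of random variables'' alluded to in the text), then obtains a per--$t$ \emph{marginal} tail bound $\mathbb{P}\{|C_{t,T}^N(\addfunc{T,\size})|>\varepsilon\}\leq 4\exp(-CN\varepsilon/\beta_t)$ by splitting $\{|UV|>\varepsilon/C\}\subset\{|U|>\varepsilon_u\}\cup\{|V|>\varepsilon_v\}$ at $\varepsilon_u\varepsilon_v=\varepsilon/C$. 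The passage to the sum over $t$ is done by the purely marginal Lemma~\ref{Lem:SumExpDev}, which turns the family of tail bounds $A\exp(-B_t\varepsilon)$ into a bound for the sum via Minkowski on $\rmL_q$-moments, without using the filtration at all; the harmonic mean of the $B_t$ that appears there is exactly what produces $\sum_t\beta_t$ and hence the factor $(1+\size)\sum_{s}\osc(h_s)$. By contrast, you push the $C^{1,N}/C^{2,N}$ product decomposition from Proposition~\ref{Prop:NormC} one step further, derive a \emph{conditional} moment-generating bound for a product of two centered empirical means, and then assemble the sum over $t$ by the tower property along $\{\mathcal{F}_t^N\}$. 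Both routes are valid and yield the same rate: the paper's Lemma~\ref{Lem:SumExpDev} is more portable since it needs only marginal tails, while your approach exploits the already-available conditional structure more directly and avoids the separate ratio Lemma~\ref{lem:inegEssentielle}. Your handling of the ``small $\varepsilon$'' regime (where the target bound is at least $1$) and the observation $\sum_{t}\beta_t\leq C(1+\size)\sum_{s}\osc(h_s)$ are the same closing ingredients as in the paper.
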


\begin{proof}
In order to apply Lemma \ref{Lem:SumExpDev} in the appendix, we first need to find an exponential deviation inequality for $C_{t,T}^N(\addfunc{T,\size})$ which is done by using the decomposition $C_{t,T}^N(\addfunc{T,\size}) = U_{t,T}^NV_{t,T}^NW_{t,T}^N$ given in \eqref{eq:decC}. First, the ratio $U_{t,T}^N$ is dealt with through Lemma \ref{lem:inegEssentielle} in the appendix by defining
$$\left\{ \begin{array}{l@{\eqdef}l}
a_N     & N^{-1} \sum_{\ell=1}^N \ewght{t}{N,\ell} G^N_{t,T}\addfunc{T,\size}(\epart{t}{N,\ell})/|\mathcal{L}_{t,T}\1|_\infty \eqsp,\\
b_N     & N^{-1} \sum_{\ell=1}^N \ewght{t}{N,\ell}\eqsp, \\
b       & \mathbb{E}[ \omega_t^1 | \mathcal{F}_{t-1}^N] = \phi^N_{t-1} \left[ Mg_t \right] / \phi^N_{t-1}[\vartheta_t] \eqsp,\\
\beta   & c_- / |\vartheta_t|_\infty\eqsp.
\end{array}\right.$$
Assumption A\ref{assum:mixing}\eqref{assum:mixing:int} and A\ref{assum:boundalgo} shows that $b \geq \beta$ and  \eqref{eq:bound-G} shows that $|a_N/b_N| \leq C (1+\size) \maxosc{T}$. Therefore, Condition (I) of Lemma \ref{lem:inegEssentielle} is satisfied. The bounds $0<\omega_t^l\leq |\omega_t|_\infty$ and the Hoeffding inequality lead to
\begin{multline*}
\mathbb{P}[|b_N-b| \geq \varepsilon] = \mathbb{E}\left[ \mathbb{P}\left[\left. \left| N^{-1}\sum_{\ell=1}^N \left( \ewght{t}{N,\ell} - \mathbb{E}[ \ewght{t}{N,1} | \mathcal{F}_{t-1}^N] \right) \right| \geq \varepsilon \right|\mathcal{F}_{t-1}^N\right] \right]
\\ \leq 2 \exp \left( -\frac{2 N \varepsilon^2 }{ |\omega_t|_\infty^2 }\right)\eqsp,
\end{multline*}
establishing Condition (ii) in Lemma \ref{lem:inegEssentielle}. Finally, Lemma~\ref{Lem:Upperbounds}\eqref{Lem:Upperbounds:ind} and the Hoeffding inequality imply that
\begin{multline*}
\mathbb{P}\left[ |a_N  | \geq \varepsilon \right] = \mathbb{E}\left[ \mathbb{P}\left[\left. \left| N^{-1} \sum_{\ell=1}^N \ewght{t}{N,\ell} G^N_{t,T}\addfunc{T,\size}(\epart{t}{N,\ell})/|\mathcal{L}_{t,T}\1|_\infty \right| \geq \varepsilon \right|\mathcal{F}_{t-1}^N\right] \right]\\
\leq 2 \exp\left( - \frac{N \varepsilon^2}{2|\omega_t|_\infty^2 \left(\sum_{s=\size}^{T}\rho^{\max(t-s,s-r-t,0)}\osc(h_s)\right)^{2}}\right)\eqsp.
\end{multline*}
Lemma \ref{lem:inegEssentielle} therefore yields
\begin{equation*}
\mathbb{P}\left\{ \left| U_{t,T}^N\right| \geq \varepsilon\right\} \leq 2 \exp\left( -\frac{CN \varepsilon^2}{\left(\sum_{s=\size}^{T}\rho^{\max(t-s,s-r-t,0)}\osc(h_s)\right)^{2}}\right)\eqsp.
\end{equation*}
Then $V_{t,T}^N$ is dealt with by using again the Hoeffding inequality and the bounds $0<b_{t,T}^{N,\ell}\leq |\omega_t|_\infty$, where $b_{t,T}^{N,\ell} \eqdef \ewght{t}{N,\ell} \frac{\mathcal{L}_{t,T}\1(\epart{t}{N,\ell})}{|\mathcal{L}_{t,T}\1|_\infty}$:
\begin{multline*}
\mathbb{P}\left[\left|N^{-1}\sum_{\ell=1}^N b_{t,T}^{N,\ell}-\mathbb{E}\left[\left.b_{t,T}^{N,1}\right|\mathcal{F}_{t-1}\right]\right| \geq \varepsilon\right] \\
= \mathbb{E}\left[ \mathbb{P}\left[\left. \left| N^{-1}\sum_{\ell=1}^N \left( b_{t,T}^{N,\ell} - \mathbb{E}\left[ b_{t,T}^{N,\ell} \middle| \mathcal{F}_{t-1}^N\right] \right) \right| \geq \varepsilon \right|\mathcal{F}_{t-1}^N\right] \right]
\leq 2 \exp \left( - CN \varepsilon^2\right)\eqsp.
\end{multline*}
Finally, $W_{t,T}^N$ has been shown in \eqref{Eq:SupIneqForW} to be bounded by a constant depending only on $\sigma_-$, $\sigma_+$, $c_-$, $\underset{t\geq 1}{\sup}|\vartheta_t|_{\infty}$ and $\underset{t\geq 0}{\sup}|\ewght{t}{}|_{\infty} $: $\left|W_{t,T}^N\right| \leq C$ so that
\begin{equation*}
\mathbb{P}\left\{ \left|C_{t,T}^N(\addfunc{T,\size})\right| > \varepsilon \right\} \leq \mathbb{P}\left\{ \left|U_{t,T}^NV_{t,T}^N\right| > \varepsilon/C \right\}\le  \mathbb{P}\left\{ \left|U_{t,T}^N\right| > \varepsilon_{u}\right\} +   \mathbb{P}\left\{ \left|V_{t,T}^N\right| > \varepsilon_{v}\right\}\eqsp,
\end{equation*}
where 
\[
\varepsilon_{u}\eqdef \sqrt{\varepsilon\sum_{s=\size}^{T}\rho^{\max(t-s,s-r-t,0)}\osc(h_s)/C}\quad\mbox{and}\quad \varepsilon_{u}\eqdef \sqrt{\frac{\varepsilon}{C\sum_{s=\size}^{T}\rho^{\max(t-s,s-r-t,0)}\osc(h_s)}}\eqsp.
\]
Therefore,
\[
\mathbb{P}\left\{ \left|C_{t,T}^N(\addfunc{T,\size})\right| > \varepsilon \right\} \leq  4 \exp\left(-\dfrac{CN\varepsilon}{\sum_{s=\size}^{T}\rho^{\max(t-s,s-r-t,0)}\osc(h_s)}\right) \eqsp.
\]
The proof of \eqref{Eq:ExpC} is finally completed by applying Lemma \ref{Lem:SumExpDev} with
\begin{equation*}
X_t = C_{t,T}^N(\addfunc{T,\size}) \eqsp, \quad A=4 \eqsp, \quad B_{t} = \dfrac{CN}{\sum_{s=\size}^{T}\rho^{\max(t-s,s-r-t,0)}\osc(h_s)}\eqsp, \quad \gamma = 1/2\eqsp.
\end{equation*}
\end{proof}

\begin{proof}[Proof of Theorem \ref{Th:ExpIneq} for the FFBS estimator]
The result is obtained by writing
\begin{equation*}
\mathbb{P}\left\{ \left|\error\right| > \varepsilon \right\} \leq \mathbb{P}\left\{ \left|\sum_{t=0}^{T}C_{t,T}^N(\addfunc{T,\size})\right| > \varepsilon/2 \right\} + \mathbb{P}\left\{ \left|\sum_{t=0}^{T}D_{t,T}^N(\addfunc{T,\size})\right| > \varepsilon/2 \right\}\eqsp,
\end{equation*}
and using \eqref{Eq:ExpD} and \eqref{Eq:ExpC}.
\end{proof}

\begin{proof}[Proof of Theorem \ref{Th:ExpIneq} for the FFBSi estimator]
We recall the decomposition used in the proof of Theorem \ref{Th:MainResult} for the FFBSi estimator:
\begin{equation*}
\errorsmall\left[\addfunc{T,\size}\right] = \frac{1}{N} \sum_{\ell=1}^N \sum_{u=0}^T \zeta_u^{N,\ell}\eqsp,
\end{equation*}
where $\errorsmall\left[\addfunc{T,\size}\right]$ is defined by \eqref{eq:errorsmall}.
Since $\left\{\zeta_u^{N,\ell}\right\}_{\ell=1}^{N}$ are $\mathcal{G}_{u,T}^N$ measurable and centered conditionally to $\mathcal{G}_{u+1,T}^N$ using the same steps as in the proof of Proposition \ref{Prop:ExpD}, we get
\begin{equation*}
\mathbb{P}\left\{ \left|\errorsmall\left[\addfunc{T,\size}\right]\right| > \varepsilon \right\} \leq 2 \exp\left( -\dfrac{CN\varepsilon^2}{\Theta_{\size,T}\sum_{s=\size}^{T}\osc(h_{s})^{2}}\right)\eqsp,
\end{equation*}
where $\Theta_{\size,T}$ is defined by \eqref{eq:defTheta}.
The proof is finally completed by writing
\[
\phi_{0:T|T}\left[\addfunc{T,\size}\right]-\widetilde{\phi}_{0:T|T}^N\left[\addfunc{T,\size}\right] = \error + \errorsmall\left[\addfunc{T,\size}\right]\eqsp,
\]
and by using Theorem \ref{Th:ExpIneq} for the FFBS estimator.
\end{proof}

\appendix

\section{Technical results}

\begin{lemma}\label{lem:inegEssentielle}
Assume that $a_N$, $b_N$, and $b$ are random variables defined on the same probability space such that there exist positive constants $\beta$, $B$, $C$, and $M$ satisfying
\begin{enumerate}[(i)]
    \item $|a_N/b_N|\leq M$, $\mathbb{P}$-a.s.\ and  $b \geq \beta$, $\mathbb{P}$-a.s.,
    \item For all $\epsilon>0$ and all $N\geq1$, $\mathbb{P}\left[|b_N-b|>\epsilon \right]\leq B \rme^{-C N \epsilon^2}$,
    \item For all $\epsilon>0$ and all $N\geq1$, $\mathbb{P} \left[ |a_N|>\epsilon \right]\leq B \rme^{-C N \left(\epsilon/M\right)^2}$.
\end{enumerate}
Then,
$$
    \mathbb{P}\left\{ \left| \frac{a_N}{b_N} \right| > \epsilon \right\} \leq B \exp{\left(-C N \left(\frac{\epsilon \beta}{2M} \right)^2 \right)} \eqsp.
$$
\end{lemma}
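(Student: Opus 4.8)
The plan is to reduce the deviation of the ratio $a_N/b_N$ to a union bound over the two ``bad'' events that hypotheses (ii) and (iii) control. First I would dispose of the trivial regime: by hypothesis (i) we have $|a_N/b_N|\le M$ almost surely, so $\mathbb{P}\{|a_N/b_N|>\epsilon\}=0$ as soon as $\epsilon>M$, and there is nothing to prove; hence one may assume $0<\epsilon\le M$ from now on.

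The core step is the deterministic inclusion
\[
\left\{\left|\frac{a_N}{b_N}\right|>\epsilon\right\}\;\subseteq\;\left\{\,|a_N|>\frac{\epsilon\beta}{2}\,\right\}\cup\left\{\,|b_N-b|>\frac{\beta}{2}\,\right\}\eqsp.
\]
To justify it, fix an outcome lying in the left-hand event and in the complement of the second right-hand event, i.e.\ with $|b_N-b|\le\beta/2$; then, using $b\ge\beta$ from (i), one gets $b_N\ge b-\beta/2\ge\beta/2$, whence $|a_N|>\epsilon\,|b_N|\ge\epsilon\beta/2$, so the outcome lies in the first right-hand event. The threshold $\beta/2$ is not essential here; any fixed fraction of $\beta$ would work, changing only the numerical constants.

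It then remains to apply the hypotheses. Using (iii) with $\epsilon$ replaced by $\epsilon\beta/2$ gives $\mathbb{P}\{|a_N|>\epsilon\beta/2\}\le B\,\rme^{-CN(\epsilon\beta/2M)^2}$, which is already precisely the target exponent; using (ii) with $\epsilon$ replaced by $\beta/2$ gives $\mathbb{P}\{|b_N-b|>\beta/2\}\le B\,\rme^{-CN\beta^2/4}$. Since we reduced to $\epsilon\le M$ we have $(\epsilon\beta/2M)^2\le(\beta/2)^2$, so this second term is no larger than the first, and adding the two bounds yields $\mathbb{P}\{|a_N/b_N|>\epsilon\}\le 2B\,\rme^{-CN(\epsilon\beta/2M)^2}$; absorbing the harmless factor $2$ into the constant gives the asserted inequality.

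There is no genuine obstacle in this argument: it is essentially a one-line set inclusion followed by two invocations of the hypotheses. The only points deserving attention are (a) the reduction to $\epsilon\le M$, without which the $\{|b_N-b|>\beta/2\}$ contribution need not be dominated by the target exponent, and (b) the bookkeeping of constants, namely checking that feeding $\epsilon\beta/2$ into hypothesis (iii) reproduces exactly the exponent $CN(\epsilon\beta/2M)^2$ appearing in the conclusion.
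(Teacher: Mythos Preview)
Your argument is correct and is the standard way to prove this type of ratio deviation inequality. The paper itself does not give a proof but simply refers to \cite[Lemma~4]{douc:garivier:moulines:olsson:2010}, so there is no ``paper proof'' to compare against; what you wrote is precisely the kind of short argument that reference contains.

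One cosmetic point: your union bound yields $2B\,\rme^{-CN(\epsilon\beta/2M)^2}$, not $B\,\rme^{-CN(\epsilon\beta/2M)^2}$, and since the lemma as stated reuses the very same constant $B$ from the hypotheses in the conclusion, the phrase ``absorbing the harmless factor $2$ into the constant'' is not literally available. This is inconsequential for every application in the paper (where $B$ and $C$ are generic), but if you want to match the statement exactly you should either note that the conclusion holds with $2B$ in place of $B$, or observe that the stated form is how the cited reference records it.
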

\begin{proof}
See \TechLem.
\end{proof}

\begin{lemma}\label{Lem:SumExpDev}
For $T\geq 0$, let $\{X_t\}_{t=0}^{T}$ be $(T+1)$ random variables. Assume that there exists a constants $A \geq 1$ and for all $0 \leq t \leq T$, there exists a constant $B_t > 0$ such that and all $\varepsilon > 0$
\begin{equation*}
\mathbb{P}\{ |X_t| > \varepsilon \} \leq A e^{-B_t\varepsilon}\eqsp.
\end{equation*}
Then, for all $0 < \gamma < 1$ and all $\varepsilon > 0$, we have
\begin{equation*}
\mathbb{P}\left\{ \left|\sum_{t=0}^T X_t\right| > \varepsilon \right\} \leq \frac{A}{1-\gamma} e^{-\gamma B\varepsilon/(T+1)}\eqsp,
\end{equation*}
where
\[
B \eqdef \left(\frac{1}{T+1}\sum_{t=0}^T B_t^{-1} \right)^{-1}\eqsp.
\]
\end{lemma}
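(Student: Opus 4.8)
The plan is to bypass any union bound over $t$ --- which would introduce a spurious factor $T+1$ in front of the exponential --- and to run a Chernoff argument directly on $\sum_{t=0}^{T}|X_t|$, splitting the exponential moment of this (a priori dependent) sum by a \emph{weighted} H\"older inequality whose exponents are calibrated to the harmonic mean $B$.

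Since $\left|\sum_{t=0}^{T}X_t\right|\leq\sum_{t=0}^{T}|X_t|$, it suffices to control $\mathbb{P}\left\{\sum_{t=0}^{T}|X_t|>\varepsilon\right\}$. First I would record a one-variable exponential moment bound: by the layer-cake identity $\mathbb{E}[\rme^{\mu|X_t|}]=1+\mu\int_{0}^{\infty}\rme^{\mu s}\mathbb{P}\{|X_t|>s\}\,\rmd s$ and the tail hypothesis, for every $0<\mu<B_t$,
\begin{equation*}
\mathbb{E}\left[\rme^{\mu|X_t|}\right]\leq 1+\frac{\mu A}{B_t-\mu}\eqsp,
\end{equation*}
so that with $\mu=\theta B_t$ and $\theta\in(0,1)$ one gets $\mathbb{E}[\rme^{\theta B_t|X_t|}]\leq 1+\theta A/(1-\theta)$.

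Next, fix $\theta\in(0,1)$, set $\lambda\eqdef\theta B/(T+1)$ and introduce the weights $p_t\eqdef B/\big((T+1)B_t\big)$, which are positive and satisfy $\sum_{t=0}^{T}p_t=1$ precisely by the definition of the harmonic mean $B$; note that $\lambda/p_t=\theta B_t$. Markov's inequality followed by the generalized H\"older inequality with exponents $(1/p_t)_{t=0}^{T}$ then gives
\begin{multline*}
\mathbb{P}\left\{\sum_{t=0}^{T}|X_t|>\varepsilon\right\}\leq \rme^{-\lambda\varepsilon}\,\mathbb{E}\left[\prod_{t=0}^{T}\rme^{\lambda|X_t|}\right]\\
\leq \rme^{-\lambda\varepsilon}\prod_{t=0}^{T}\left(\mathbb{E}\left[\rme^{\theta B_t|X_t|}\right]\right)^{p_t}\leq \rme^{-\lambda\varepsilon}\left(1+\frac{\theta A}{1-\theta}\right)\eqsp,
\end{multline*}
where the last inequality uses $\sum_{t}p_t=1$; since $\lambda\varepsilon=\theta B\varepsilon/(T+1)$ this is already the announced form of the bound.

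It then remains only to take $\theta=\gamma$ and to observe that, because $A\geq 1$,
\begin{equation*}
1+\frac{\gamma A}{1-\gamma}=\frac{A}{1-\gamma}-(A-1)\leq\frac{A}{1-\gamma}\eqsp,
\end{equation*}
which yields $\mathbb{P}\left\{\left|\sum_{t=0}^{T}X_t\right|>\varepsilon\right\}\leq\frac{A}{1-\gamma}\rme^{-\gamma B\varepsilon/(T+1)}$. The only genuine obstacle is the absence of independence among the $X_t$: this is exactly what the weighted H\"older step circumvents, and the hypothesis $\gamma<1$ is precisely what guarantees that each single exponential moment $\mathbb{E}[\rme^{\gamma B_t|X_t|}]$ is finite, so that no truncation or separate treatment of small $\varepsilon$ is needed.
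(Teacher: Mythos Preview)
Your proof is correct. Both your argument and the paper's start with the same Chernoff step, applying Markov's inequality at level $\lambda=\gamma B/(T+1)$ to $\exp\bigl(\lambda\sum_t|X_t|\bigr)$, but they diverge in how they control the resulting exponential moment. The paper expands $\rme^{\lambda|\sum_t X_t|}$ as a power series and applies Minkowski's inequality term by term, bounding $\|X_t\|_q\leq (Aq!)^{1/q}/B_t$ via tail integration and then summing the geometric series in $\gamma$. You instead keep the exponential intact, bound each one-dimensional moment $\mathbb{E}[\rme^{\theta B_t|X_t|}]$ directly by the layer-cake formula, and then couple the factors through a weighted H\"older inequality with exponents $1/p_t=(T+1)B_t/B$ calibrated so that $\sum_t p_t=1$. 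Your route is a bit more streamlined --- it avoids the series manipulation and makes transparent why the harmonic mean $B$ appears --- while the paper's Minkowski approach has the minor advantage of giving explicit moment bounds $\mathbb{E}[|X_t|^q]\leq Aq!/B_t^q$ along the way. Both land on the identical constant $A/(1-\gamma)$ after the same final observation that $1+\gamma A/(1-\gamma)\leq A/(1-\gamma)$ when $A\geq 1$.
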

\begin{proof}
By the Bienayme-Tchebychev inequality, we have
\begin{multline}\label{eq:bienayme}
\mathbb{P}\left\{ \left|\sum_{t=0}^T X_t\right| > \varepsilon \right\} = \mathbb{P}\left\{ \exp\left[\frac{\gamma B}{T+1}\left|\sum_{t=0}^T X_t\right|\right] > e^{\gamma B\varepsilon/(T+1)} \right\} \\
\leq e^{-\gamma B\varepsilon/(T+1)} \mathbb{E}\left[ \exp\left[\frac{\gamma B}{T+1}\left|\sum_{t=0}^T X_t\right|\right] \right] \eqsp.
\end{multline}
It remains to bound the expectation in the RHS of \eqref{eq:bienayme} by $A(1-\gamma)^{-1}$. First, by the Minkowski inequality,
\begin{multline*}
\mathbb{E}\left[ \exp\left[\frac{\gamma B}{T+1}\left|\sum_{t=0}^T X_t\right|\right] \right] = \sum_{q=0}^\infty \frac{\gamma^qB^q}{q!(T+1)^q} \mathbb{E}\left[ \left|\sum_{t=0}^T X_t\right|^q \right]    \\
\leq 1 + \sum_{q=1}^\infty \frac{\gamma^qB^q}{q!(T+1)^{q}}  \left(\sum_{t=0}^T \normL{q}{X_{t}}\right)^{q}\eqsp.
\end{multline*}
Moreover, for $q \geq 1$, $\mathbb{E}\left[ \left|X_t\right|^q \right]$ can be bounded by
\begin{equation*}
\mathbb{E}\left[ \left|X_t\right|^q \right] = \int_0^\infty \mathbb{P}\{ |X_t| > \varepsilon^{1/q} \} \rmd\varepsilon \leq A \int_0^\infty e^{-B_{t}\varepsilon^{1/q}}\rmd\varepsilon = \dfrac{Aq!}{B_{t}^q}   \eqsp,
\end{equation*}
Finally,
\begin{equation*}
\mathbb{E}\left[ \exp\left[\frac{\gamma B}{T+1}\left|\sum_{t=0}^T X_t\right|\right] \right] \leq A \sum_{q=0}^\infty \gamma^q = \frac{A}{(1-\gamma)} \eqsp.
\end{equation*}
\end{proof}

\bibliographystyle{plain}
\bibliography{./FFBSimsbib}

\end{document}